 \newtheorem{thm}{Theorem}[section]
 \newtheorem{lem}[thm]{Lemma}
 \newtheorem{exam}[thm]{Example}
 \newtheorem{prop}[thm]{Proposition}
 \newtheorem{cor}[thm]{Corollary}
 \newtheorem{rem}[thm]{Remark}
 \newtheorem{defn}[thm]{Definition}
\newcommand{\module}[1]{\left\vert #1 \right\vert}
\newcommand{\m}{\mathbf{m}}
 \def\Hol{\mathop{\rm Hol}\nolimits}
 \def\Aut{\mathop{\rm Aut}\nolimits}
 \def\Id{\mathop{\rm Id}\nolimits}
 \def\dist{\mathop{\rm dist}\nolimits}
 \def\Im{\mathop{\rm Im}\nolimits}
 \def\rg{\mathop{\rm rg}\nolimits}
 \def\span{\mathop{\rm Span}\nolimits}
\newcommand{\C}{{\mathbb C}}   
 \newcommand{\R}{{\mathbb R}}           
\newcommand{\N}{{\mathbb N}}        
\newcommand{\Z}{{\mathbb Z}}          
\newcommand{\LX}{{\mathcal L}(X)}     
\newcommand{\D}{{\mathbb D}}
\author{W. Arendt}  
\address{Wolfgang Arendt, Institute of Applied Analysis, University of Ulm. Helmholtzstr. 18, D-89069 Ulm (Germany)} 
\email{wolfgang.arendt@uni-ulm.de}
\author{B. C\'elari\`es} 
\address{Benjamin C\'elari\`es, Univ Lyon, \'Ecole centrale de Lyon, Universit\'e Claude Bernard Lyon 1, CNRS UMR 5208, Institut Camille Jordan, F-69134 Écully, France} 
\email{celaries@math.univ-lyon1.fr}
\author{I. Chalendar}
\address{Isabelle Chalendar,  Universit\'e Paris-Est, LAMA, (UMR 8050), UPEM, UPEC, CNRS, F-77454, Marne-la-Vallée (France)}
\email{isabelle.chalendar@u-pem.fr}
\title[Diagonalization of composition operators]{In Koenigs' footsteps:  diagonalization of composition operators}
\keywords{Composition operators, Fr\'echet space of holomorphic functions, Banach spaces of holomorphic function, spectrum, spectral projections, compactness}
\subjclass[2010]{30D05, 47D03, 47B33}
\begin{document}	

\begin{abstract}   
Let $\varphi:\D\to\D$ be a holomorphic map with a fixed point $\alpha\in\D$ such that $0\leq |\varphi'(\alpha)|<1$. We show that the spectrum of the composition operator $C_\varphi$ on the Fr\'echet space $\Hol(\D)$
 is $\{0\}\cup \{  \varphi'(\alpha)^n:n=0,1,\cdots\}$ and its essential spectrum is reduced to $\{0\}$. This contrasts the situation where a restriction of $C_\varphi$ to Banach spaces such as $H^2(\D)$ is considered. 
 Our proofs are based on  explicit formulae for the spectral projections associated with the  point spectrum found by Koenigs.  Finally, as a byproduct, we obtain information on the spectrum for bounded composition operators induced by a Schr\"oder symbol on arbitrary Banach spaces of holomorphic functions.   	
\end{abstract}	
\maketitle
\tableofcontents
\section{Introduction}

Let $\varphi$  be a  holomorphic self-map of the open unit disc $\D$ and let $\Hol(\D)$ be the algebra of holomorphic functions on $\D$ which is a Fr\'echet space endowed with the topology of uniform convergence on every compact subsets of $\D$. 

Denote by $\Aut(\D)$ the group of  all automorphisms on $\D$. It is a well-known fact that such functions have the form $z\mapsto e^{i\theta}\frac{z-a}{1-\overline{a}z}$ where $a\in\D$ and $\theta\in\R$.

The  functional equation $f\circ \varphi=\lambda f$ where $\lambda\in\C$ is called  the
\emph{homogeneous Schr\"oder} equation. 

For those $\varphi$ which are not automorphisms of $\D$ and which admit a fixed point $\alpha\in\D$, the solution was found by G. Koenigs in 1884. Note that a fixed point in $\D$  is unique whenever it exists. 

By $\N_0$ we denote the set of all nonnegative integers and let $\N=\N_0\setminus \{0\}=\{1,2,\ldots\}$.  


\begin{thm}[Koenigs' theorem]\label{th:koenigs}
Let $\varphi$ be a holomorphic map on $\D$ such that $\varphi(\D)\subset\D$, $\varphi\not\in\Aut(\D)$ and assume that $\varphi$ has a fixed point $\alpha\in\D$ with $\lambda_1:=\varphi'(\alpha)$. Then the following holds:
\begin{itemize}
	\item If $\lambda_1=0$ the equation $f\circ\varphi=\lambda f$ has a nontrivial solution $f\in\Hol(\D)$ if and only if $\lambda=1$ and the constant functions are the only solutions. 
	\item If $\lambda_1\neq 0$, then:
     \begin{itemize}
	\item[(a)] the equation $f\circ\varphi=\lambda f$ has a nontrivial solution $f\in\Hol(\D)$ if and only if $\lambda\in\{\lambda_1^n:n\in\N_0\}$;  
	\item[(b)] there exists a unique function $\kappa\in\Hol(\D)$ satisfying \[\kappa\circ\varphi=\lambda_1\kappa  \mbox{ and }\kappa'(\alpha)=1;\]
	\item[(c)] for $n\in\N_0$ and $f\in\Hol(\D)$, $f\circ \varphi=\lambda_1^n f$ if and only if $f=c\kappa^n$ for some $c\in\C$. 
     \end{itemize}
\end{itemize}
\end{thm}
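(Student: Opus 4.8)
The plan is to first reduce to the case $\alpha=0$: choosing $\psi\in\Aut(\D)$ with $\psi(0)=\alpha$ and setting $\tilde\varphi=\psi^{-1}\circ\varphi\circ\psi$, one has $\tilde\varphi(0)=0$ and $\tilde\varphi'(0)=\varphi'(\alpha)=\lambda_1$, while $f\mapsto f\circ\psi$ is a bijection of $\Hol(\D)$ intertwining the two Schr\"oder equations; so I may assume $\varphi(0)=0$ throughout. The basic tool is that, since $\varphi$ is a non-automorphism self-map fixing $0$, the Schwarz lemma gives $|\varphi(z)|<|z|$ for $z\neq 0$, and the iterates $\varphi_n=\varphi\circ\cdots\circ\varphi$ converge to $0$ locally uniformly on $\D$. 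Fixing a constant $\rho$ with $|\lambda_1|<\rho<1$, I would record the quantitative estimate $|\varphi_n(z)|\le\rho^n|z|$ valid on a small disc $\{|z|\le r\}$, which follows from $\varphi'(0)=\lambda_1$ by shrinking $r$.

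For $\lambda_1=0$ I would argue by the order of vanishing at $0$. Evaluating $f\circ\varphi=\lambda f$ at $0$ gives $f(0)=\lambda f(0)$, so if $\lambda\neq1$ any solution vanishes at $0$; if moreover such a solution $f\not\equiv0$ vanishes to order $m\ge1$ at $0$, then since $\varphi(z)=O(z^2)$ the series $f\circ\varphi$ vanishes to order at least $2m$ while $\lambda f$ vanishes to order exactly $m$ (when $\lambda\neq0$), and comparing the coefficient of $z^m$ forces $\lambda=0$; but then $f\circ\varphi\equiv0$ gives $f\equiv0$ by the identity theorem. Thus no nontrivial solution exists unless $\lambda=1$, and for $\lambda=1$ the relation $f=f\circ\varphi_n\to f(0)$ locally uniformly shows $f$ is constant.

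The heart of the proof is part (b) for $\lambda_1\neq0$, where I would follow Koenigs and set $\kappa_n=\lambda_1^{-n}\varphi_n$. Using $|\varphi(w)-\lambda_1 w|\le C|w|^2$ near $0$ together with $|\varphi_n(z)|\le\rho^n|z|$, the telescoping differences satisfy $|\kappa_{n+1}(z)-\kappa_n(z)|\le C'(\rho^2/|\lambda_1|)^n$ on $\{|z|\le r\}$; here the decisive point, and the main obstacle, is to choose $\rho$ with $|\lambda_1|<\rho<|\lambda_1|^{1/2}$ (possible exactly because $|\lambda_1|<1$) so that $\rho^2/|\lambda_1|<1$ and the series converges, giving a holomorphic limit $\kappa$ on the small disc. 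To extend the convergence to all of $\D$ I would use the identity $\kappa_n=\lambda_1^{-N}\kappa_{n-N}\circ\varphi_N$ and the fact that any compact $K\subset\D$ is mapped by some $\varphi_N$ into $\{|z|\le r\}$, so $\kappa_n\to\lambda_1^{-N}\kappa\circ\varphi_N$ locally uniformly; the limit is holomorphic on $\D$. Passing to the limit in $\kappa_n\circ\varphi=\lambda_1\kappa_{n+1}$ gives $\kappa\circ\varphi=\lambda_1\kappa$, while $\kappa_n'(0)=1$ yields $\kappa'(0)=1$ (Cauchy estimates) and $\kappa(0)=0$. For uniqueness, if $\tilde\kappa$ satisfies the same conditions then $\tilde\kappa(0)=0$ (as $\lambda_1\neq1$), and writing $\tilde\kappa=\lambda_1^{-n}\tilde\kappa\circ\varphi_n$ and expanding $\tilde\kappa(w)=w+O(w^2)$ shows $\tilde\kappa=\lim\kappa_n=\kappa$ on $\{|z|\le r\}$, hence on $\D$.

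Finally, parts (a) and (c). The coefficient comparison above, now with $\varphi(z)=\lambda_1 z+O(z^2)$, shows that if $f\not\equiv0$ solves $f\circ\varphi=\lambda f$ and vanishes to order $m$ at $0$ then $\lambda=\lambda_1^m$, so the point spectrum is contained in $\{\lambda_1^m:m\in\N_0\}$; each value is attained since $\kappa^m\circ\varphi=(\lambda_1\kappa)^m=\lambda_1^m\kappa^m$, which proves (a). For (c), given $f\circ\varphi=\lambda_1^n f$ with $f\not\equiv0$, the injectivity of $m\mapsto|\lambda_1|^m$ (here $0<|\lambda_1|<1$) forces $f$ to vanish to order exactly $n$; then $g=f/\kappa^n$ is meromorphic on $\D$, holomorphic and finite at $0$, and satisfies $g\circ\varphi=g$. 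I would show $g$ is bounded: on a disc $\{|w|\le r\}$ where $\kappa$ has no zero but $0$, $g$ is holomorphic and bounded by some $M$, and for any $z$ the relation $g(z)=g(\varphi_n(z))$ with $\varphi_n(z)$ eventually in that disc gives $|g|\le M$ off its poles. Boundedness makes all poles removable (Riemann), so $g\in\Hol(\D)$, and then $g\circ\varphi=g$ forces $g$ constant exactly as in the case $\lambda_1=0$; hence $f=c\kappa^n$, completing (c).
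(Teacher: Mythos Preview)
The paper does not actually prove Theorem~\ref{th:koenigs}; it is stated in the introduction as Koenigs' classical 1884 result and used as a black box throughout. The only indication of a proof is the remark, just after Definition~1.2, that the Koenigs eigenfunction is obtained as $\kappa=\lim_{n\to\infty}\lambda_1^{-n}\varphi_n$ in $\Hol(\D)$. Your construction of $\kappa$ via $\kappa_n=\lambda_1^{-n}\varphi_n$, with the geometric estimate obtained by choosing $|\lambda_1|<\rho<|\lambda_1|^{1/2}$, is exactly the classical argument behind that remark, so in that sense you are following the route the paper alludes to.

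Since there is no proof in the paper to compare against, let me simply confirm that your argument is correct. The reduction to $\alpha=0$, the order-of-vanishing computation pinning down the point spectrum, the telescoping estimate for $\kappa_{n+1}-\kappa_n$, the propagation of convergence from a small disc to all of $\D$ via $\kappa_n=\lambda_1^{-N}\kappa_{n-N}\circ\varphi_N$, and the invariance argument $g=g\circ\varphi_n\to g(0)$ are all sound. One point in part~(c) deserves a slightly sharper formulation: $\kappa$ may well have zeros in $\D$ other than $\alpha$ (the paper's Example~\ref{ex:1}(b) exhibits this), so $g=f/\kappa^n$ is a priori only meromorphic. Your removable-singularity step is right, but the cleanest phrasing is that the identity $g=g\circ\varphi_N$ holds as meromorphic functions, and for $N$ large enough $\varphi_N$ maps a neighbourhood of any given $z_0\in\D$ into the small disc on which $g$ is holomorphic; hence $g$ coincides near $z_0$ with a holomorphic function and therefore has no pole at $z_0$. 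This avoids having to track whether the intermediate iterates $\varphi_k(z_0)$ hit poles of $g$.
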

The case where $\varphi'(\alpha)\neq 0$ is the most interesting one. To be consistent with \cite{shap98}, we use the following terminology. 
\begin{defn}
	A \emph{Schr\"oder map} is a holomorphic function $\varphi$ satisfying the following conditions:
	\begin{center}
	 $\varphi(\D)\subset\D$, $\varphi\not\in\Aut(\D)$, $\exists\alpha\in\D$ such that $\varphi(\alpha)=\alpha$ and  $\varphi'(\alpha)\neq 0$. 
	 \end{center}
	  The function $\kappa$ associated to a Schr\"oder map in Theorem~\ref{th:koenigs} is called  the \emph{Koenigs' eigenfunction} of $\varphi$. 
\end{defn}
 As a consequence of the Schwarz lemma \cite{rudin}, a holomorphic self-map $\varphi$ of $\D$ with a fixed point $\alpha\in\D$ is a Schr\"oder map if and only if $0<|\varphi'(\alpha)|<1$.     
 Moreover, Koenigs' eigenfunction $\kappa$ is then obtained as the limit of   $\frac{\varphi_n}{\lambda^n }$ in $\Hol(\D)$ as $n\to\infty$, where $\varphi_n=\varphi\circ\cdots\circ\varphi$. \\
 

The aim of this paper is to study the  non homogeneous Schr\"oder equation  
\begin{equation}\label{eq:schroeder}
f\circ \varphi-\lambda f=g
\end{equation} where $\lambda\in\C$ and $g\in \Hol(\D)$ are given and $f\in\Hol(\D)$ the solution. 
 
 As in Koenigs' work, we consider the case where  $\varphi\not\in\Aut(\D)$ and $\varphi$ has  a fixed point $\alpha$ in $\D$.  
 
 
 The study of the homogenenous Schr\"oder equation can be reformulated from an operator theory  point of view in the following way: 
 consider the composition operator $C_\varphi:\Hol(\D)\to\Hol(\D)$ given by $C_\varphi(f)=f\circ \varphi$. We denote by $\sigma(C_\varphi)$ the spectrum  and by $\sigma_p(C_\varphi)$ the point spectrum of $C_\varphi$. 
 Thus  (\ref{eq:schroeder}) has a unique solution for all $g\in\Hol(\D)$ if and only if $\lambda\not\in\sigma(T)$. Moreover, Koenigs' theorem implies that $\sigma_p(C_\varphi)=\{  \lambda_n:n\in\N_0\}$.

 
 
 
  
  Our main result consists in finding "the spectral projections" associated with $\lambda_n=\lambda_1^n$. The difficulty is that these spectral projections are not defined since a priori we do not know that the $\lambda_n$ are isolated in the spectrum. 
  We define projections $P_n$ of rank $1$ such that $P_nC_\varphi=C_\varphi P_n=\lambda_n P_n$. 
  Using these "spectral" projections we then show that actually the spectrum of the composition operator $C_\varphi$ on $\Hol(\D)$  is given by 
  \[     \sigma(C_\varphi)  =\{\lambda_n:n\in\N_0\} \cup\{0\} .\]  
This looks very similar to spectral properties of compact operators. But we show that the operator $C_\varphi$ is compact on $\Hol(\D)$ only in very special situations. 

Nevertheless, our results show that the operator $C_\varphi$ on $\Hol(\D)$ is always a Riesz operator; i.e. its essential spectrum is reduced to $\{0\}$. This contrasts the situation where a restriction $T={C_\varphi}_{|H^2(\D)}$ is considered. Indeed, in this case, the essential spectrum is a disc with $r_e(T)>0$ in many cases. Actually, much is known on such restrictions to spaces such as $H^p(\D)$, Bergman space, Dirichlet space and others. See the monographs \cite{CM} of Cowen and MacCluer and of Shapiro \cite{shapiro}, as well as the  articles     \cite{CM94,hurst,Kam,KM,MS,MZZ,sha87,shapiroT,zheng} to name a few.

Our results on $\Hol(\D)$ allow us to prove some spectral properties of the restriction $T$ of $C_\varphi$ to some invariant Banach space $X\hookrightarrow \Hol(\D)$.  For instance we will see that $0\in\sigma(T)$ if and only if $\dim X=\infty$, and in this case we show that the essential spectrum $\sigma_e(T)$ is the connected component of $0$ in $\sigma_e(T)$. \\

 The paper is organized as follows. In Section~\ref{sec:2} we characterize composition operators on $\Hol(\D)$ as the non-zero algebra homomorphisms. This is also an interesting example of automatic continuity. We also characterize when $C_\varphi$ is compact as operators on $\Hol(\D)$ (which is much more restrictive than on $H^2(\D)$, for example). Section~\ref{sec:3} is devoted to the definition and investigation of the spectral projections. The main theorem determining the  spectrum of $C_\varphi$ in $\Hol(\D)$ is established in Section~\ref{sec:4}. Finally, we deduce spectral properties of restrictions to  arbitrary invariant Banach spaces in Section~\ref{sec:5}.      
  
  
 \section{Composition operators on $\Hol(\D)$}\label{sec:2}
  Let $\varphi$ be a holomorphic self-map of $\D$. We define the composition operator $C_\varphi$  on $\Hol(\D)$ by $C_\varphi(f)=f\circ \varphi$. Then $C_\varphi$ is in ${\mathcal L}(\Hol(\D))$ the algebra of linear and continuous operators on $\Hol(\D)$; indeed the linearity is trivial and  the continuity follows from the definition of the topology of the Fr\'echet space (uniform convergence on compact subsets of $\D$) and the continuity of $\varphi$. \\
  The next proposition is an algebraic characterization of composition operators. Note that $\Hol(\D)$ is an algebra. An algebra homomorphism $A:\Hol(\D)\to\Hol(\D)$ is a linear map satisfying 
  \[A(f\cdot g) =A(f)\cdot A(g) \mbox{ for all }f,g\in\Hol(\D.)  \]
  \begin{prop}\label{prop:compoalg}
  	Let  $A:\Hol(\D)\to \Hol(\D)$ be linear. 
  	The following assertions are equivalent.
  \begin{itemize}
  	\item[(i)] There exists a holomorphic map $\varphi:\D\to\D$ such that $A=C_\varphi$;
  	\item[(ii)] $A$ is  an  algebra homomorphism different from $0$;
  	\item[(iii)] $A$ is continuous and $Ae_n=(Ae_1)^n$ for all $n\in\N_0$.   
  \end{itemize}	
   \end{prop}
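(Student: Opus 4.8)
The plan is to prove the three implications $(i)\Rightarrow(ii)\Rightarrow(iii)\Rightarrow(i)$, where the notation $e_n$ denotes the monomial $e_n(z)=z^n$, so that $e_1$ is the identity map $z\mapsto z$.

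The implication $(i)\Rightarrow(ii)$ is the easy direction. If $A=C_\varphi$, then for $f,g\in\Hol(\D)$ we have $A(f\cdot g)=(f\cdot g)\circ\varphi=(f\circ\varphi)(g\circ\varphi)=A(f)\cdot A(g)$, so $A$ is an algebra homomorphism; it is nonzero because $A(1)=1$ (the constant function $1$ maps to itself). For $(ii)\Rightarrow(iii)$ there are two things to check. The multiplicativity gives $Ae_n=A(e_1^n)=(Ae_1)^n$ immediately. The continuity is the point the authors flag as an instance of automatic continuity: a nonzero algebra homomorphism on $\Hol(\D)$ is automatically continuous. I would either invoke the automatic continuity of algebra homomorphisms on Fr\'echet algebras of holomorphic functions, or argue directly. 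A direct approach: set $\varphi:=Ae_1$; using multiplicativity one shows $A$ acts on polynomials by $p\mapsto p\circ\varphi$, and one must control $\varphi$ enough to pass to limits. The cleanest self-contained route is to first prove that $\varphi=Ae_1$ actually maps $\D$ into $\D$ (see below), and then observe that composition with such a $\varphi$ is continuous on $\Hol(\D)$, so $A$ agrees with the continuous operator $C_\varphi$ on the dense subalgebra of polynomials and hence everywhere.

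The substantial implication is $(iii)\Rightarrow(i)$. Set $\varphi:=Ae_1\in\Hol(\D)$. The hypothesis $Ae_n=(Ae_1)^n=\varphi^n=e_n\circ\varphi$ together with linearity shows that $A$ and $C_\varphi$ agree on all polynomials. Since $A$ is assumed continuous and polynomials are dense in $\Hol(\D)$, once we know $C_\varphi$ is a well-defined continuous operator on $\Hol(\D)$ the two must coincide on all of $\Hol(\D)$, giving $A=C_\varphi$. The main obstacle is therefore the analytic fact that $\varphi$ takes its values in $\D$: a priori $\varphi=Ae_1$ is only some holomorphic function on $\D$, and $f\circ\varphi$ makes sense for all $f\in\Hol(\D)$ only if $\varphi(\D)\subset\D$. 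I expect this to be the heart of the argument.

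To prove $\varphi(\D)\subset\D$, I would argue by contradiction, exploiting that $A$ is defined on \emph{all} of $\Hol(\D)$. Suppose there is a point $z_0\in\D$ with $w_0:=\varphi(z_0)\notin\D$, i.e. $|w_0|\geq 1$. The idea is to feed $A$ a function that is holomorphic on $\D$ but behaves badly near $w_0$, and derive a contradiction from continuity and multiplicativity. Concretely, consider the evaluation functional $f\mapsto (Af)(z_0)$; by multiplicativity it is a nonzero algebra homomorphism $\Hol(\D)\to\C$, hence of the form $f\mapsto f(w_0)$ for some $w_0\in\D$ (the characters of $\Hol(\D)$ are exactly the point evaluations at points of $\D$), and by construction $w_0=(Ae_1)(z_0)=\varphi(z_0)$. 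This forces $\varphi(z_0)\in\D$. Running this for every $z_0\in\D$ yields $\varphi(\D)\subset\D$, which is precisely what is needed. With $\varphi(\D)\subset\D$ established, $C_\varphi$ is a bona fide continuous operator on $\Hol(\D)$, and the density-of-polynomials argument finishes the proof. I expect the identification of characters of $\Hol(\D)$ with point evaluations to be the one external ingredient worth stating carefully.
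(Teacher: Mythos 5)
Your key ingredient --- that every nonzero multiplicative functional on $\Hol(\D)$ is evaluation at a point of $\D$ --- is exactly the paper's Lemma~\ref{lem:homo} (proved there by an elementary division argument: if $z_0:=L(e_1)$ and $f(z_0)=0$, then $f=(e_1-z_0e_0)g$ with $g\in\Hol(\D)$, so $Lf=0$), and your overall strategy matches the paper's. However, the way you assemble the pieces has a genuine circularity in $(ii)\Rightarrow(iii)$. There you propose: first show $\varphi:=Ae_1$ maps $\D$ into $\D$, then conclude that $A$, agreeing with the continuous operator $C_\varphi$ on the dense subspace of polynomials, equals $C_\varphi$ ``and hence everywhere''. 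Agreement on a dense subspace propagates to the whole space only when \emph{both} maps are continuous; continuity of $A$ is precisely what $(ii)\Rightarrow(iii)$ is supposed to deliver, so this step assumes what it is proving. The repair is already in your hands: the character argument gives more than the range condition $\varphi(z_0)\in\D$. Applied to an arbitrary $f$, it gives $(Af)(z_0)=f(\varphi(z_0))$ for \emph{every} $f\in\Hol(\D)$ and every $z_0\in\D$, i.e. $A=C_\varphi$ outright, with no density argument at all. This is why the paper proves $(ii)\Rightarrow(i)$ directly and then gets continuity for free from $(i)\Rightarrow(iii)$; note also that Lemma~\ref{lem:homo}, although stated for continuous $L$, nowhere uses continuity in its proof, which is exactly what makes this automatic-continuity statement possible.

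There is a second, smaller gap in your $(iii)\Rightarrow(i)$: you apply the character classification to $L(f):=(Af)(z_0)$ ``by multiplicativity'', but under hypothesis $(iii)$ the map $A$ is only known to be multiplicative on polynomials (from $A(e_ne_m)=(Ae_1)^{n+m}=Ae_n\cdot Ae_m$ and bilinearity). Before invoking the classification --- whose proof requires applying $L$ to products $(e_1-z_0e_0)g$ with $g\in\Hol(\D)$ arbitrary, not polynomial --- you must extend multiplicativity to all of $\Hol(\D)$, using the continuity of $A$ (available in this implication), the density of polynomials, and the continuity of multiplication on $\Hol(\D)$. This is precisely the paper's step $(iii)\Rightarrow(ii)$, which your proposal skips. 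With that one line added, your $(iii)\Rightarrow(i)$ is correct; and there your closing density argument is legitimate, since in that implication $A$ \emph{is} assumed continuous.
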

Here we define $e_n\in\Hol(\D)$ by $e_n(z)=z^n$ for all $z\in\D$ and all $n\in\N_0$. 
For the proof we use the following well-known result.   
\begin{lem}\label{lem:homo}
Let $L:\Hol(\D)\to\C$ be a continuous algebra homomorphism, $L\neq 0$. Then there exists $z_0\in\D$ such that $Lf=f(z_0)$ for all $f\in\Hol(\D)$.  
\end{lem}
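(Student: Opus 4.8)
The plan is to identify the candidate point $z_0$ as the image of the coordinate function under $L$ and then verify that $L$ is evaluation at $z_0$. First I would normalize $L$: writing $\mathbf{1}$ for the constant function $1$, multiplicativity gives $L(\mathbf{1})=L(\mathbf{1})^2$, so $L(\mathbf{1})\in\{0,1\}$; and $L(\mathbf{1})=0$ would force $L(f)=L(f\cdot\mathbf{1})=L(f)L(\mathbf{1})=0$ for every $f$, contradicting $L\neq 0$. Hence $L(\mathbf{1})=1$. I then set $z_0:=L(e_1)$, where $e_1(z)=z$.

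The key step is to show $z_0\in\D$. Suppose instead $|z_0|\geq 1$. Then $g(z)=(z-z_0)^{-1}$ is holomorphic on $\D$ (its only pole lies outside $\D$), so $g\in\Hol(\D)$, and it satisfies $(e_1-z_0\mathbf{1})\cdot g=\mathbf{1}$. Applying $L$ and using multiplicativity yields $(L(e_1)-z_0)\,L(g)=1$, i.e. $0\cdot L(g)=1$, a contradiction. Therefore $z_0\in\D$, and in particular the functional $f\mapsto f(z_0)$ is now well defined on $\Hol(\D)$.

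To finish, I would show $L(f)=f(z_0)$ for all $f$. My preferred route is purely algebraic: given $f\in\Hol(\D)$, the function $f-f(z_0)\mathbf{1}$ vanishes at $z_0\in\D$, so by the standard division (removable singularity) lemma there is $h\in\Hol(\D)$ with $f-f(z_0)\mathbf{1}=(e_1-z_0\mathbf{1})\,h$. Applying $L$ gives $L(f)-f(z_0)=L(e_1-z_0\mathbf{1})\,L(h)=0$, whence $L(f)=f(z_0)$. Alternatively one may use continuity directly: expand $f=\sum_{n\geq 0}a_n e_n$, a series converging in $\Hol(\D)$; since $L$ is continuous and linear, $L(f)=\sum_{n\geq 0}a_n L(e_n)=\sum_{n\geq 0}a_n z_0^n=f(z_0)$, where $L(e_n)=L(e_1)^n=z_0^n$ by multiplicativity and the last equality uses $z_0\in\D$ so that the power series indeed sums to $f(z_0)$.

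The main obstacle is really the single step $z_0\in\D$: without it neither the inverse $g$ nor the evaluation $f\mapsto f(z_0)$ makes sense, so this is where the analytic input (the location of the pole, or the radius of convergence) genuinely enters; everything else is formal manipulation with the homomorphism property. I would also remark that the factorization route uses no continuity whatsoever, so the continuity hypothesis in the statement is only needed if one prefers the Taylor-series argument — it is harmless and matches the way the lemma will be applied to the characters of $\Hol(\D)$.
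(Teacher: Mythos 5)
Your proof is correct and follows essentially the same route as the paper: normalize $L(e_0)=1$, set $z_0:=L(e_1)$, rule out $z_0\notin\D$ by applying $L$ to the identity $(e_1-z_0e_0)\,g=e_0$ with $g(z)=(z-z_0)^{-1}\in\Hol(\D)$, and finish by factoring $f-f(z_0)e_0=(e_1-z_0e_0)h$. Your closing remark is also accurate: the paper's own argument never uses the continuity hypothesis either, which is exactly why the paper presents this lemma as an instance of automatic continuity.
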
 
\begin{proof}
Since $Lf=L(f\cdot e_0)=L(f)L(e_0)$ for all 	$f\in\Hol(\D)$ and since $L\neq 0$, it follows that $L(e_0)=1$. Set $z_0:=Le_1$.
Then $z_0\in\D$. Indeed, otherwise $g(z)=\frac{1}{z-z_0}$ defines a function $g\in\Hol(\D)$ such that $(e_1-z_0e_0)g=e_0$. Hence  
\[ 1=Le_0=( Le_1-z_0Le_0)Lg=0,   \]
a contradiction.
 For $f\in\Hol(\D)$ such that $f(z_0)=0$, we have $Lf=0$. Indeed, since there exists $g\in\Hol(\D)$ such that $f=(e_1-z_0e_0)g$, it follows  that  $L(f)=(L(e_1)-z_0L(e_0))L(g)=0$. \\
For an arbitrary $f\in\Hol(\D)$, note that $h:= f-f(z_0)e_0$ satisfies $h(z_0)=0$.   Hence $0=L(h)=L(f)-f(z_0)$. 
\end{proof}	
\begin{rem}
	We are grateful to H.G. Dales and J. Esterle for helping us with Lemma~\ref{lem:homo}. For much more information about automatic continuity, we refer to the monograph of Dales \cite{dales} and 
	 the  survey article of Esterle \cite{esterle}.   
\end{rem}
\begin{proof}[Proof of Proposition~\ref{prop:compoalg}]
$(ii)\Rightarrow (i)$: since $A\neq 0$, it follows as in Lemma~\ref{lem:homo} that $Ae_0=e_0$.  Let $z\in\D$. Then $L(f):=(Af)(z)$ is an algebra homomorphism and $L(e_0)=1$. By Lemma~\ref{lem:homo}, there exists $\varphi(z)\in\D$ such that $(Af)(z)=f(\varphi(z))$ for all $f\in\Hol(\D)$. In particular $\varphi=Ae_1\in\Hol(\D)$. \\
$(iii)\Rightarrow (ii)$: it follows from $(iii)$ that $A(fg)=A(f)A(g)$ if $f$ and $g$ are polynomials. Since the set of polynomials is dense in $\Hol(\D)$ and since the multiplication is continuous, $(ii)$ follows.\\
$(i)\Rightarrow (iii)$ is trivial.  	
\end{proof}

For our purposes, the following corollary is useful.  	
\begin{cor}\label{cor:aut}
Let $X=\Hol(\D)$ and $\varphi$ a holomorphic self-map of $\D$. The following assertions are equivalent:
\begin{itemize}
	\item[(i)] $C_\varphi$ is invertible in ${\mathcal L}(\Hol(D))$;
	\item[(ii)] $\varphi$ is an automorphism of $\D$.   
\end{itemize} 
\end{cor}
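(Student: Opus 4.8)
The plan is to derive everything from the algebraic characterization in Proposition~\ref{prop:compoalg}, combined with the elementary composition identity
$C_\varphi C_\psi = C_{\psi\circ\varphi}$, valid for any two holomorphic self-maps $\varphi,\psi$ of $\D$ and following at once from $(f\circ\psi)\circ\varphi = f\circ(\psi\circ\varphi)$. The implication $(ii)\Rightarrow(i)$ is then immediate: if $\varphi\in\Aut(\D)$, its inverse $\varphi^{-1}$ is again a holomorphic self-map of $\D$, so $C_{\varphi^{-1}}\in\mathcal{L}(\Hol(\D))$, and the composition identity with $\psi=\varphi^{-1}$ gives $C_\varphi C_{\varphi^{-1}}=C_{\varphi^{-1}\circ\varphi}=C_{\Id}=\Id$ together with $C_{\varphi^{-1}}C_\varphi=\Id$. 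Hence $C_\varphi$ is invertible with inverse $C_{\varphi^{-1}}$.

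For the harder implication $(i)\Rightarrow(ii)$, I assume $C_\varphi$ is invertible with inverse $B\in\mathcal{L}(\Hol(\D))$. The main obstacle, and the heart of the argument, is to show that $B$ is itself a composition operator; there is no a priori reason for the inverse of a composition operator to be of this form, and this is exactly where the algebraic characterization is indispensable. I would establish that $B$ is a non-zero algebra homomorphism: it is non-zero since it is invertible, and for $f,g\in\Hol(\D)$, using that $C_\varphi$ is an algebra homomorphism by Proposition~\ref{prop:compoalg}, one computes $C_\varphi\bigl(B(fg)\bigr)=fg=(C_\varphi Bf)(C_\varphi Bg)=C_\varphi\bigl(Bf\cdot Bg\bigr)$, so injectivity of $C_\varphi$ forces $B(fg)=Bf\cdot Bg$. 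Applying Proposition~\ref{prop:compoalg} to $B$ yields a holomorphic self-map $\psi:\D\to\D$ with $B=C_\psi$.

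It remains to conclude that $\varphi$ is an automorphism. From $C_\varphi C_\psi=\Id$ and $C_\psi C_\varphi=\Id$, the composition identity gives $C_{\psi\circ\varphi}=\Id=C_{\varphi\circ\psi}$. Evaluating both on $e_1$ (and recalling $e_1=\Id_\D$) yields $\psi\circ\varphi=e_1=\varphi\circ\psi$, that is $\psi\circ\varphi=\varphi\circ\psi=\Id_\D$. Thus $\varphi$ is a bijective holomorphic self-map of $\D$ whose inverse $\psi$ is holomorphic, so $\varphi\in\Aut(\D)$, which completes the proof.
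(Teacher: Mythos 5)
Your proof is correct and follows essentially the same route as the paper's: both directions use Proposition~\ref{prop:compoalg} to recognize the inverse as a composition operator $C_\psi$ and then evaluate $C_\varphi C_\psi=C_\psi C_\varphi=\Id$ on $e_1$ to get $\psi\circ\varphi=\varphi\circ\psi=\Id_\D$. The only difference is that you spell out why the inverse $B$ is an algebra homomorphism (via injectivity of $C_\varphi$), a step the paper asserts without detail.
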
   
\begin{proof}
$(ii)\Rightarrow (i)$ is clear since $C_{\varphi^{-1}}C_\varphi=C_\varphi C_{\varphi^{-1}}=\Id$, where $\Id$ denotes  the identity map on $X$. \\
$(i)\Rightarrow (ii)$: let $C_\varphi$ be invertible, $A=C_\varphi^{-1}$. Then $A$ is  an algebra homomorphism. By Proposition~\ref{prop:compoalg} there exists a holomorphic map $\psi:\D\to\D$ such that $A=C_\psi$. Then \[e_1=C_\varphi(C_\psi e_1)=\psi\circ\varphi\mbox{ and }e_1=C_\psi(C_\varphi e_1)=\varphi\circ\psi.\]
Thus $\varphi$ is an automorphism and $\psi=\varphi^{-1}$. 
\end{proof}	
 Next we want to characterize those $\varphi$ for which $C_\varphi$ is compact on $\Hol(\D)$. The reason of this investigation is the following. 
  One of our main points in the article is to show that the spectral properties of a composition operator $C_\varphi$ for $\varphi:\D\to\D$ with interior fixed point looks very much to what one knows from compact operators. However, as we will show now, for composition operators on $\Hol(\D)$, compactness is a very restrictive condition. 
 
 Recall that ${\mathcal V}\subset \Hol(\D)$ is a neighborhood of $0$ if and only if there exist a compact subset $K\subset \D$ and $\varepsilon>0$ such that 
 \[  {\mathcal V}_{K,\varepsilon}:=\{f\in\Hol(\D) :|f(z)|<\varepsilon \mbox{ for  all }z\in K \}\subset{\mathcal V}.   \]      
 A linear mapping $T:X\to X$ where $X$ is a Fr\'echet space, is called \emph{compact} if  there exists a neighborhood ${\mathcal V}$ of $0$ such that $T  {\mathcal V}$ is relatively compact. Each compact linear mapping is continuous. We refer to Kelley--Namioka \cite{KN} for these notions and properties of compact operators. 
  
\begin{thm}\label{th:compact}
Let $\varphi:\D\to\D$ be holomorphic. The following assertions are equivalent:
\begin{itemize}
	\item[(i)] $C_\varphi$ is compact as operator from $\Hol(\D)$ to $\Hol(\D)$;
	\item[(ii)] $\sup_{z\in\D}|\varphi(z)|<1$.
\end{itemize} 	
\end{thm}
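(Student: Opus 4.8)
The plan is to prove the two implications separately, using Montel's theorem for $(ii)\Rightarrow(i)$ and an explicit family of monomials for $(i)\Rightarrow(ii)$. The underlying principle is that compactness of $C_\varphi$ is, via the definition recalled above, a statement about the image of one basic neighborhood $\mathcal{V}_{K,\varepsilon}$, so in both directions the work reduces to understanding $C_\varphi \mathcal{V}_{K,\varepsilon}$ as a family of holomorphic functions and deciding whether it is a normal family.

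For $(ii)\Rightarrow(i)$: set $r:=\sup_{z\in\D}|\varphi(z)|<1$, so that $\varphi(\D)$ is contained in the compact disc $\{z\in\C:|z|\le r\}\subset\D$. I would then take the basic neighborhood $\mathcal{V}:=\mathcal{V}_{\{|z|\le r\},1}$. For any $f\in\mathcal{V}$ and any $z\in\D$ one has $\varphi(z)\in\{|z|\le r\}$, whence $|(C_\varphi f)(z)|=|f(\varphi(z))|<1$. Thus $C_\varphi\mathcal{V}$ is a family of holomorphic functions uniformly bounded by $1$ on all of $\D$, in particular locally bounded; by Montel's theorem it is relatively compact in $\Hol(\D)$. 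This is precisely the definition of compactness of $C_\varphi$.

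For $(i)\Rightarrow(ii)$ I argue by contraposition. Since $|\varphi(z)|<1$ for every $z$, the only alternative to $(ii)$ is $\sup_{z\in\D}|\varphi(z)|=1$, which I now assume. Suppose $C_\varphi$ were compact, witnessed by a neighborhood $\mathcal{V}$ of $0$. Since $\mathcal{V}$ contains some basic $\mathcal{V}_{K,\varepsilon}$ and a subset of a relatively compact set is relatively compact, I may assume $\mathcal{V}=\mathcal{V}_{K,\varepsilon}$, and I fix $\rho<1$ with $K\subset\{|z|\le\rho\}$. Relative compactness forces $C_\varphi\mathcal{V}$ to be bounded in $\Hol(\D)$, so the continuous linear functional $f\mapsto (C_\varphi f)(z_0)$ is bounded on $\mathcal{V}$ for every fixed $z_0$. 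Now I choose $z_0\in\D$ with $|\varphi(z_0)|>\rho$, which is possible since the supremum equals $1>\rho$, and test against the monomials $f_N:=\tfrac{\varepsilon}{2}\rho^{-N}e_N$. On $K\subset\{|z|\le\rho\}$ one has $|f_N|\le\tfrac{\varepsilon}{2}<\varepsilon$, so $f_N\in\mathcal{V}_{K,\varepsilon}$, whereas $|(C_\varphi f_N)(z_0)|=\tfrac{\varepsilon}{2}\bigl(|\varphi(z_0)|/\rho\bigr)^{N}\to\infty$. This contradicts the boundedness of the evaluation functional on $\mathcal{V}$, so $C_\varphi$ cannot be compact.

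The implication $(ii)\Rightarrow(i)$ is essentially just Montel, so the step I would treat most carefully is the reduction in $(i)\Rightarrow(ii)$ to a basic neighborhood $\mathcal{V}_{K,\varepsilon}$ together with the passage from relative compactness to boundedness of point evaluations; once that framework is fixed, the monomial family immediately exploits the gap between the radius $\rho$ controlling $K$ and a point where $|\varphi|$ exceeds $\rho$, which is exactly where the hypothesis $\sup|\varphi|=1$ enters. I do not expect any serious obstacle beyond keeping these topological bookkeeping points precise.
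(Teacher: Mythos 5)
Your proof is correct. The implication $(ii)\Rightarrow(i)$ coincides with the paper's: both take the basic neighborhood attached to the disc $\{|z|\le r\}$ and apply Montel's theorem to its image. For $(i)\Rightarrow(ii)$, however, you follow a genuinely different and more elementary route. The paper makes the same initial reduction (replace $\mathcal{V}$ by a basic neighborhood $\mathcal{V}_0=\{f:|f(z)|<\varepsilon \mbox{ on } r_0\overline{\D}\}$ and pick $w_0$ with $|\varphi(w_0)|>r_0$), but then it invokes Runge's theorem on the compact set $r_0\overline{\D}\cup\{\varphi(w_0)\}$ to manufacture polynomials $p_n$ with $|p_n|<\varepsilon$ on $r_0\overline{\D}$ and $|p_n(\varphi(w_0))|\ge n$, concluding that $(C_\varphi p_n)$ has no convergent subsequence. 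Your observation is that once the compact set has been enlarged to a centered disc $\{|z|\le\rho\}$, the escaping value $\varphi(z_0)$ automatically has modulus exceeding $\rho$, so the rescaled monomials $f_N=\tfrac{\varepsilon}{2}\rho^{-N}e_N$ already do the job: they lie in $\mathcal{V}_{K,\varepsilon}$ while $|f_N(\varphi(z_0))|=\tfrac{\varepsilon}{2}\bigl(|\varphi(z_0)|/\rho\bigr)^N\to\infty$. This removes Runge's theorem entirely, and your replacement of the ``no convergent subsequence'' argument by ``relatively compact sets are bounded, and point evaluations are continuous hence bounded on bounded sets'' is equally valid in the Fr\'echet setting. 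What the paper's construction buys is flexibility: Runge separates an arbitrary compact set with connected complement from a prescribed exterior point, which would be needed on domains where the escaping value cannot be compared to the compact set simply by taking moduli; on the disc, where every compact set sits inside a smaller centered disc, your monomial trick suffices and is the cleaner argument.
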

\begin{proof}
$(i)\Rightarrow (ii)$: assume that $\varphi(\D)\not\subset r\overline{\D}$ for all $0<r<1$. Let ${\mathcal V}$ be a neighborhood of $0$. We show that $C_\varphi ({\mathcal V})$ is not relatively compact. There exists $0<\varepsilon<1$ and $0<r_0<1$ such that 
\[  {\mathcal V}_0:=\{  f\in \Hol(\D):|f(z)|<\varepsilon, \, \forall z\in r_0\overline{\D} \}\subset {\mathcal V}.     \]
Thus it is sufficient to show that $C_\varphi ( {\mathcal V}_0)$ is not relatively compact. 
By our assumption there exists $w_0\in\D$ such that $z_0:=\varphi(w_0)\not\in r_0\overline{\D}$. Then there exist $r_0<r_1<1$ and $\rho>0$ such that $r_1\overline{\D}\cap D(z_0,\rho)=\emptyset$. 

The set $K:=r_0\overline{\D}\cup \{ z_0 \}$ is compact and $\C\setminus K$ is connected. Let $n\in\N_0$ and define $h_n$  by 
\[h_n(z)=0 \mbox{ for } z\in r_1\D \mbox{ and }h_n(z)=n+1\mbox{ for }z\in D(z_0,\rho).\]
 Set $\Omega:=r_1\D \cup D(z_0,\rho)$. 
Then $K\subset \Omega$ and $h_n:\Omega\to \C$ is holomorphic. By Runge's theorem, there exists a polynomial $p_n:\C\to\C$ such that $|p_n(z)-h_n(z)|<\varepsilon$ for all $z\in K$. 
This implies that ${p_n}_{| \D}\in {\mathcal V}_0$ and $|p_n(z_0)|\geq n$. 
Since $|C_\varphi (p_n)(w_0)|=|p_n(z_0)|\geq n$, the sequence $(C_\varphi p_n)_{n\in\N_0}$ has no convergent subsequence.\\
$(ii)\Rightarrow (i)$: Assume that $\sup_{z\in\D}|\varphi(z)|=:r_0<1$. The set 
\[  {\mathcal V}:=\{   f\in \Hol(\D)   :|f(z)|<1\mbox{ if }|z|\leq r_0  \}  \]
is a neighborhood of $0$. Let $f\in{\mathcal V}$. Since $\varphi(\D)\subset r_0(\overline{\D})$, one has $|f(\varphi(w))|<1$  for all $w\in\D$. Now it follows from Montel's theorem that $C_\varphi {\mathcal V}$ is relatively compact in $\Hol(\D)$.       	
\end{proof}	

\begin{rem}
The same characterization of compact composition operators is valid in some special Banach spaces of holomorphic functions, for example $X=H^\infty(\D)$ \cite{schwartz}.  However on $H^2(\D)$, the class of mappings $\varphi$ such that $C_\varphi$ is compact is much larger \cite{CM}.   
\end{rem}

\section{Diagonalization of composition operators}\label{sec:3}
In this section we show that composition operators $C_\varphi$ on $\Hol(\D)$ can be diagonalized if the symbol $\varphi$ is a Schr\"oder map. 

For the following we fix the holomorphic function $\varphi:\D\to\D$, with interior fixed point $\alpha=\varphi(\alpha)\in\D$ and suppose that $\varphi\not\in\Aut(\D)$, $\varphi'(\alpha)\neq 0$. We set $\lambda_n=\varphi'(\alpha)^n$ for $n\in\N_0$. Thus $\lambda_1\in\D$ by the Schwarz lemma and $|\lambda_n|$ tends to $0$ as $n\to\infty$. The range of an operator $T$ is denoted by $\rg T$. We denote by $\kappa$ Koenigs' eigenfunction associated with $\varphi$. The following properties of $\kappa^n$ will be needed. 
\begin{lem}\label{lem:deriv_sigma}
	For all $n \in \N$, $(\kappa^n)^{(n)}(\alpha) = n!$ and $(\kappa^n)^{(l)}(\alpha) =0$  for $l=0,\cdots, n-1$. 
\end{lem}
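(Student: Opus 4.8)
The plan is to prove the two assertions by exploiting the defining properties of the Koenigs eigenfunction $\kappa$, namely $\kappa\circ\varphi = \lambda_1\kappa$ together with the normalization $\kappa'(\alpha)=1$, and to proceed by induction on $n$. The key observation is that these conditions pin down the Taylor expansion of $\kappa$ at the fixed point $\alpha$, and the Taylor expansion of any power $\kappa^n$ is then controlled by the order of vanishing of $\kappa-\kappa(\alpha)$ at $\alpha$.

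First I would determine $\kappa(\alpha)$. Evaluating the functional equation $\kappa\circ\varphi=\lambda_1\kappa$ at $\alpha$ and using $\varphi(\alpha)=\alpha$ gives $\kappa(\alpha)=\lambda_1\kappa(\alpha)$; since $\lambda_1=\varphi'(\alpha)\neq 1$ (because $\lambda_1\in\D$ by the Schwarz lemma), we conclude $\kappa(\alpha)=0$. Combined with $\kappa'(\alpha)=1$, this shows that $\kappa$ has a zero of order exactly $1$ at $\alpha$, so we may write $\kappa(z)=(z-\alpha)u(z)$ with $u$ holomorphic near $\alpha$ and $u(\alpha)=1$. The cleanest way to organize the computation is to pass to the local coordinate $w=z-\alpha$ and note that $\kappa(\alpha+w)=w+O(w^2)$ as $w\to 0$.

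The heart of the argument is then the behavior of powers. Since $\kappa(\alpha+w)=w+O(w^2)$, raising to the $n$-th power gives $\kappa^n(\alpha+w)=w^n+O(w^{n+1})$. Reading off Taylor coefficients, the first nonvanishing derivative of $\kappa^n$ at $\alpha$ is the $n$-th one, and its value is $n!$ times the coefficient of $w^n$, which is $1$; hence $(\kappa^n)^{(n)}(\alpha)=n!$, while $(\kappa^n)^{(l)}(\alpha)=0$ for all $l=0,\dots,n-1$. This can be made rigorous either by the general fact that a function vanishing to order exactly $n$ at a point has its first nonzero derivative equal to $n!$ times the leading Taylor coefficient, or by a direct induction using the product rule $(\kappa^{n})'=n\kappa^{n-1}\kappa'$ together with the induction hypothesis that $\kappa^{n-1}$ vanishes to order $n-1$ at $\alpha$ with $(\kappa^{n-1})^{(n-1)}(\alpha)=(n-1)!$.

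The only point requiring care — and the main obstacle, though a mild one — is establishing that the leading coefficient of $\kappa$ at $\alpha$ is exactly $1$ rather than merely nonzero; this is precisely where the normalization $\kappa'(\alpha)=1$ from part (b) of Koenigs' theorem enters, and it is what forces the value $n!$ (rather than an arbitrary nonzero constant) for $(\kappa^n)^{(n)}(\alpha)$. Once the local expansion $\kappa(\alpha+w)=w+O(w^2)$ is secured, the rest is a routine manipulation of Taylor series. I would present the induction on $n$ explicitly, since it keeps the bookkeeping transparent and avoids invoking the general order-of-vanishing lemma.
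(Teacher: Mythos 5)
Your proposal is correct and follows essentially the same route as the paper: the paper's proof also writes $\kappa^n(z)=\left[(z-\alpha)+o(z-\alpha)\right]^n=(z-\alpha)^n+o((z-\alpha)^n)$ using $\kappa(\alpha)=0$ and $\kappa'(\alpha)=1$, and reads off the derivatives from this expansion. Your extra details (deriving $\kappa(\alpha)=0$ from the functional equation, and the optional induction via the product rule) are sound but not needed beyond what the paper does.
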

\begin{proof}
	%
	%
Since $\kappa (\alpha) = 0$ and $\kappa^\prime (\alpha)=1$, we get that, as $z\to\alpha$, 
	\[
		\kappa^n(z)  =  \left[ (z-\alpha) + o(z - \alpha)\right]^n 
		 =  (z - \alpha)^n + o((z - \alpha)^n).
	\]
	Hence,  $(\kappa^n)^{(n)}(\alpha) = n!$ and $(\kappa^n)^{(l)}(\alpha) =0$  for $l=0,\cdots, n-1$. 
\end{proof}
In the following theorem we define inductively a series of rank-one projections which diagonalize the operator $C_\varphi$  on $\Hol(\D)$.  
\begin{thm}\label{th:31}
	Define iteratively rank-one projections $P_n\in{\mathcal L}(\Hol(\D))$  by 
	\begin{equation}\label{eq:proj}
	P_0f=f(\alpha) e_0 \mbox{ and  for $n\in\N$, }P_n(f)=\frac{1}{n!}g^{(n)}(\alpha)\kappa^n,   
	\end{equation}
	where $g=f-\sum_{k=0}^{n-1}P_kf$.  Then the following holds:
	\begin{itemize}
		\item[(a)] $P_nC_\varphi =C_\varphi P_n=\lambda_n P_n$. 
		\item[(b)]  $f^{(l)}(\alpha)=\left( \sum_{k=0}^n P_kf\right)^{(l)}(\alpha)$ for $l=0,\cdots, n$ and $f\in\Hol(\D)$.
		\item[(c)] There exist complex numbers $c_{n,m}$ ($n,m\in\N_0$) such that 
		\[P_nf=\left( \sum_{m=0}^n c_{n,m}f^{(m)}(\alpha)\right) \kappa^n .\]
		\item[(d)] $P_nP_m=\delta_{n,m}P_n$ for all $n,m\in\N_0$. 
	\end{itemize}
\end{thm}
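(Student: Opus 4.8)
The plan is to run a single induction on $n$ that establishes (b), (c), (a), (d) in that dependency order, relying throughout on two structural facts about Koenigs' eigenfunction. First, since $\kappa\circ\varphi=\lambda_1\kappa$ we get $\kappa^n\circ\varphi=\lambda_n\kappa^n$, that is $C_\varphi\kappa^n=\lambda_n\kappa^n$, so each $\kappa^n$ is an eigenfunction of $C_\varphi$. Second, Lemma~\ref{lem:deriv_sigma} gives $(\kappa^n)^{(l)}(\alpha)=n!\,\delta_{l,n}$ for $0\le l\le n$, so $\kappa^n$ vanishes to exactly order $n$ at $\alpha$. These are the only properties of $\kappa$ and $\varphi$ I expect to use.

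First I would prove (b) by induction on $n$. For $n=0$ it is immediate from $P_0f=f(\alpha)e_0$. For the step, write $g=f-\sum_{k=0}^{n-1}P_kf$, so that by the inductive hypothesis $g^{(l)}(\alpha)=0$ for $l=0,\dots,n-1$. Since $P_nf=\frac1{n!}g^{(n)}(\alpha)\kappa^n$ and $(\kappa^n)^{(l)}(\alpha)=n!\,\delta_{l,n}$, the function $P_nf$ has vanishing derivatives of orders $0,\dots,n-1$ at $\alpha$ and $n$-th derivative $g^{(n)}(\alpha)=f^{(n)}(\alpha)-(\sum_{k<n}P_kf)^{(n)}(\alpha)$; adding it therefore leaves the jet of orders $0,\dots,n-1$ unchanged while correcting the $n$-th one exactly to $f^{(n)}(\alpha)$, which is (b). Statement (c) then follows by a second induction: in $P_nf=\frac1{n!}\big(f^{(n)}(\alpha)-(\sum_{k<n}P_kf)^{(n)}(\alpha)\big)\kappa^n$ the term $(\sum_{k<n}P_kf)^{(n)}(\alpha)$ is, by the inductive form of (c) together with the fixed constants $(\kappa^k)^{(n)}(\alpha)$, a linear combination of $f^{(0)}(\alpha),\dots,f^{(n-1)}(\alpha)$ with coefficients independent of $f$; hence $P_nf$ has the asserted shape, with $c_{n,n}=1/n!$.

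For (a), the identity $C_\varphi P_n=\lambda_n P_n$ is the easy half: by (c), $P_nf$ is a scalar multiple of $\kappa^n$, and $C_\varphi\kappa^n=\lambda_n\kappa^n$. The reverse identity $P_nC_\varphi=\lambda_nP_n$ is the crux of the theorem and I expect it to be the main obstacle. I would prove it by induction on $n$. Setting $h=f-\sum_{k<n}P_kf$ and using the inductive hypothesis $P_kC_\varphi=\lambda_kP_k$ together with the easy half $C_\varphi P_k=\lambda_kP_k$ for $k<n$, one checks that $g:=C_\varphi f-\sum_{k<n}P_k(C_\varphi f)$ reduces to $C_\varphi h=h\circ\varphi$, whence $P_n(C_\varphi f)=\frac1{n!}(h\circ\varphi)^{(n)}(\alpha)\kappa^n$. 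The decisive point is the local computation: by (b) the function $h$ vanishes to order $\ge n$ at $\alpha$, so $h(z)=\frac{h^{(n)}(\alpha)}{n!}(z-\alpha)^n+o((z-\alpha)^n)$; combining this with $\varphi(z)-\alpha=\lambda_1(z-\alpha)+o(z-\alpha)$ (using $\varphi(\alpha)=\alpha$ and $\varphi'(\alpha)=\lambda_1$) yields $(h\circ\varphi)^{(n)}(\alpha)=\lambda_1^n h^{(n)}(\alpha)=\lambda_n h^{(n)}(\alpha)$. Since $P_nf=\frac1{n!}h^{(n)}(\alpha)\kappa^n$, this gives $P_n(C_\varphi f)=\lambda_nP_nf$.

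Finally I would deduce (d) by first showing $P_k\kappa^m=\delta_{k,m}\kappa^m$ for all $k,m$, proved by strong induction on $k$ for fixed $m$: for $k<m$ the iterate $g$ reduces to $\kappa^m$ and $(\kappa^m)^{(k)}(\alpha)=0$, so $P_k\kappa^m=0$; for $k=m$ one gets $P_m\kappa^m=\frac1{m!}(\kappa^m)^{(m)}(\alpha)\kappa^m=\kappa^m$; and for $k>m$ the inductive hypothesis gives $\sum_{j<k}P_j\kappa^m=\kappa^m$, hence $g=0$ and $P_k\kappa^m=0$. Writing $P_mf=\ell_m(f)\kappa^m$ with the functional $\ell_m$ supplied by (c), we then obtain $P_nP_mf=\ell_m(f)P_n\kappa^m=\delta_{n,m}P_mf$, which is (d). In particular each $P_n$ is idempotent, and since $P_nf$ is always a multiple of $\kappa^n\not\equiv0$ while $P_n\kappa^n=\kappa^n$, each $P_n$ is a genuine rank-one projection, continuous because the functionals $f\mapsto f^{(m)}(\alpha)$ are continuous on $\Hol(\D)$.
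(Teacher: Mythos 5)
Your proposal is correct, and its skeleton matches the paper's: prove (b) by induction, get the easy half of (a) from the fact that $P_nf$ is a multiple of the eigenfunction $\kappa^n$, prove the hard half $P_nC_\varphi=\lambda_nP_n$ by induction via the identity $C_\varphi f-\sum_{k<n}P_k(C_\varphi f)=h\circ\varphi$ with $h=f-\sum_{k<n}P_kf$, and then deduce (c) and (d). The genuine divergence is in the two computational steps. For the decisive evaluation of $(h\circ\varphi)^{(n)}(\alpha)$ the paper invokes Fa\`a di Bruno's formula, noting that every term containing $h^{(l)}(\alpha)$ with $l<n$ vanishes by (b), which leaves only $\varphi'(\alpha)^n h^{(n)}(\alpha)$; you instead compose the local expansions $h(z)=\frac{h^{(n)}(\alpha)}{n!}(z-\alpha)^n+o((z-\alpha)^n)$ and $\varphi(z)-\alpha=\lambda_1(z-\alpha)+o(z-\alpha)$, which is legitimate (for holomorphic functions the $o$-expansion identifies the Taylor coefficient, and $o((\varphi(z)-\alpha)^n)=o((z-\alpha)^n)$ because $\varphi(z)-\alpha=O(z-\alpha)$). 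This is exactly the technique the paper itself uses to prove Lemma~\ref{lem:deriv_sigma}, so your route is more elementary and more uniform: one asymptotic trick replaces the combinatorial apparatus of the $C_{\m}^n$. For (d), the paper kills the off-diagonal products by the eigenvalue argument $\lambda_kP_l\kappa^k=P_l(\kappa^k\circ\varphi)=\lambda_lP_l\kappa^k$, which requires the $\lambda_n$ to be pairwise distinct (true because $0<|\lambda_1|<1$); you compute $P_k\kappa^m=\delta_{k,m}\kappa^m$ directly from the recursive definition and Lemma~\ref{lem:deriv_sigma}, needing no separation of eigenvalues. Both methods are sound; yours is marginally more self-contained, while the paper's Fa\`a di Bruno computation has the incidental advantage of exhibiting the structure behind the explicit coefficients $c_{n,m}$ (as in the formulas for $P_2$ and $P_3$). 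Your closing remarks on continuity (via continuity of $f\mapsto f^{(m)}(\alpha)$ on $\Hol(\D)$) and on the rank being exactly one (since $P_n\kappa^n=\kappa^n\neq 0$) fill in points the paper leaves implicit.
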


We deduce the following decomposition property from Theorem~\ref{th:31}. Let 
\[\Hol_n(\alpha):=\{  f\in\Hol(\D):f(\alpha)=f'(\alpha)=\cdots =f^{(n)}(\alpha)=0\},\]
and $Q_n=\sum_{k=0}^nP_k$, where $P_k$ is given in Theorem~\ref{th:31}. 
\begin{cor}\label{cor:33}
The mappings $Q_n$ are projections commuting with $C_\varphi$. Moreover $\{ \kappa^l:l=0,\cdots ,n\}$ is a basis of $\rg Q_n$  and $\ker(Q_n)=\Hol_n(\alpha)$.  Thus we have the decomposition 
\[  \Hol(\D)=\span \{  \kappa^m:m=0,\cdots,n\}\oplus \Hol_n(\alpha)  \]
into two subspaces which are invariant by $C_\varphi$. 
\end{cor}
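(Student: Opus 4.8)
The plan is to read off every assertion from Theorem~\ref{th:31}, invoking Lemma~\ref{lem:deriv_sigma} at the two places where one must distinguish the functions $\kappa^0,\dots,\kappa^n$ from one another.

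First, that $Q_n$ is a projection commuting with $C_\varphi$ is purely formal. Expanding $Q_n=\sum_{k=0}^n P_k$ and applying the orthogonality relations $P_kP_j=\delta_{k,j}P_k$ of (d) gives $Q_n^2=\sum_{k,j=0}^n P_kP_j=\sum_{k=0}^n P_k=Q_n$. Likewise the intertwining $C_\varphi P_k=P_k C_\varphi=\lambda_k P_k$ of (a) yields $C_\varphi Q_n=\sum_{k=0}^n\lambda_k P_k=Q_n C_\varphi$.

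Next I would identify $\rg Q_n$. By (c) every $P_k f$ is a scalar multiple of $\kappa^k$, so $\rg Q_n\subseteq\span\{\kappa^0,\dots,\kappa^n\}$. For the reverse inclusion, observe that $P_l$ is a rank-one projection whose range is $\span\{\kappa^l\}$, hence $P_l\kappa^l=\kappa^l$; combined with (d) this gives $P_k\kappa^l=P_kP_l\kappa^l=0$ for $k\neq l$, so $Q_n\kappa^l=\kappa^l$ for $0\le l\le n$ and each $\kappa^l$ lies in $\rg Q_n$. That $\{\kappa^0,\dots,\kappa^n\}$ is in fact a basis follows from Lemma~\ref{lem:deriv_sigma}: since $\kappa^m$ vanishes to order exactly $m$ at $\alpha$, the matrix $\bigl((\kappa^m)^{(l)}(\alpha)\bigr)_{0\le l,m\le n}$ is lower triangular with nonzero diagonal entries $l!$, so the $\kappa^m$ are linearly independent.

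For the kernel I would use (b), which says $(Q_n f)^{(l)}(\alpha)=f^{(l)}(\alpha)$ for $l=0,\dots,n$. If $Q_n f=0$ then these derivatives vanish, so $f\in\Hol_n(\alpha)$. Conversely, if $f\in\Hol_n(\alpha)$ write $Q_n f=\sum_{m=0}^n a_m\kappa^m$; the vanishing of $(Q_n f)^{(l)}(\alpha)=f^{(l)}(\alpha)$ for $l=0,\dots,n$, together with the same triangular system from Lemma~\ref{lem:deriv_sigma}, forces $a_0=\dots=a_n=0$, whence $Q_n f=0$. This proves $\ker Q_n=\Hol_n(\alpha)$. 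Finally, $\Hol(\D)=\rg Q_n\oplus\ker Q_n$ is the splitting induced by the projection $Q_n$, and both summands are $C_\varphi$-invariant because $C_\varphi$ commutes with $Q_n$ (invariance of the span is also immediate from $C_\varphi\kappa^m=\lambda_m\kappa^m$). The only step needing real care is the converse inclusion for the kernel, where the triangular structure of Lemma~\ref{lem:deriv_sigma} is what lets one invert the finite linear system; all the rest is bookkeeping with the identities of Theorem~\ref{th:31}.
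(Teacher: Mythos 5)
Your proof is correct and takes essentially the same route as the paper: both arguments simply read off parts (a)--(d) of Theorem~\ref{th:31} and use the triangular system of derivative conditions from Lemma~\ref{lem:deriv_sigma} to separate the powers $\kappa^0,\dots,\kappa^n$. The only difference is organizational --- you first get idempotence of $Q_n$ from the orthogonality relations (d) and then identify $\rg Q_n$ and $\ker Q_n$, whereas the paper first establishes the direct sum $\span\{\kappa^m : m=0,\dots,n\}\oplus\Hol_n(\alpha)$ (via (b), (c) and the same triangularity) and then observes that $Q_n$ is the projection along this decomposition.
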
	
As a consequence, ${C_\varphi}_{|\rg Q_n}$ is a diagonal operator since $C_\varphi (\kappa^l)=\lambda_l\kappa^l$ for $l=0,\cdots,n$. Of course, by definition $\kappa^0$ is the constant function equal to $1$. 
\begin{proof}[Proof of Corollary~\ref{cor:33}]
a) Let $g=\sum_{m=0}^n a_m\kappa^m\in \Hol_n(\alpha)$ where $a_m\in\C$. Then by
 Lemma~\ref{lem:deriv_sigma}, \[0=g(\alpha)=a_0, 0=g'(\alpha)=a_1,\cdots, 0=g^{(n)}(\alpha)=n!a_n.\]
 This shows that the functions $\kappa^m, m=0,\cdots,n$ are linearly independent and that 
 \[\span\{  \kappa^m:m=0,\cdots,n\}\cap \Hol_n(\alpha)=\{0\}.  \]
 b) Let $f\in\Hol(\D)$. Then, by Theorem~\ref{th:31}, $f-Q_nf\in\Hol_n(\alpha)$. This shows that 
 \[ \Hol(\D)=\span \{  \kappa^m:m=0,\cdots,n\}\oplus \Hol_n(\alpha)        \]
 and that $Q_n$ is the projection onto the first space along this decomposition.  
	
\end{proof}	
\begin{proof}[Proof of Theorem~\ref{th:31}]
	We define $P_n$ by the iteration equation (\ref{eq:proj}). \\
	At first we show (b) inductively. For $n=0$ it is trivial. Let $n\geq 1$ and assume that (b) is true for  $n-1$. Let $f\in\Hol(\D)$ and $0\leq l<n$. Since $\kappa(\alpha)=0$, $(\kappa^n)^{(l)}(\alpha)=0$ for $l<n$ (by Lemma~\ref{lem:deriv_sigma}), it follows that 
	\[  \left( \sum_{k=0}^n P_kf \right) ^{(l)}(\alpha)= \left( \sum_{k=0}^{n-1} P_kf \right) ^{(l)}(\alpha)=f^{(l)}(\alpha), \]
	by the inductive hypothesis. For $l=n$, we have 
	\begin{eqnarray*}
		\left( \sum_{k=0}^n P_kf \right) ^{(n)}(\alpha) & =  &\left(\sum_{k=0}^{n-1} P_kf\right)^{(n)}(\alpha)
		\\
		&  & + \frac{1}{n!} \left( f-\sum_{k=0}^{n-1}P_kf\right)^{(n)}(\alpha) (\kappa^n)^{(n)}(\alpha)  \\
		& = & f^{(n)}(\alpha),
	\end{eqnarray*}
	since $(\kappa^n)^{(n)}(\alpha)=n!$  (see Lemma~\ref{lem:deriv_sigma}). 
	Thus (b) is proved. \\
	It is clear that $(P_nf)\circ \varphi =\lambda_nP_nf$ since $\kappa^n \circ \varphi =\lambda_n \kappa^n$. We show inductively that $P_n(f\circ \varphi)=\lambda_n P_nf$. For $n=0$ this is trivial.  Let $n\geq 1$ and  assume now that $P_{l}(f\circ\varphi) =\lambda_{l}P_{l}f$ for all $l\leq n-1$. Note that 
	\[P_n (f\circ \varphi) =\frac{1}{n!} (\tilde{g})^{(n)}(\alpha)\kappa^n,  \]
	where \[\tilde{g} =f\circ \varphi-\sum_{k=0}^{n-1}P_k(f\circ \varphi)=g\circ\varphi\]
	by the inductive hypothesis, where $g=f-\sum_{k=0}^{n-1}P_k f$. 
	It follows that $P_n(f\circ \varphi)=\frac{1}{n!}(g\circ\varphi)^{(n)}(\alpha) \kappa^n $. 	
	Now let us introduce some more notations in order to compute $(g\circ\varphi)^{(n)}(\alpha)$. 
	For $n \in \N$, let
	\[J_n = \left\lbrace \m = (m_1, \cdots , m_n) \in \N^n ~\vert~ m_1 + 2m_2 +  \cdots + n m_n = n  \right\rbrace  \]
	and
	\[ K_n = J_n \setminus \left\lbrace (n,0,\cdots, 0) \right\rbrace \]
	For $\m = (m_1, \cdots , m_n)\in J_n$, set
	$ \module{\m} = m_1 + \cdots + m_n$ and note that, for $\m \in J_n$, $\m \in K_n$ if and only if $\module{\m}< n$. For $\m \in J_n$, we also define the following coefficients
	\[ C_\m^n = \frac{n!}{m_1! \, m_2! \, \cdots m_n !} \prod\limits_{j=1}^n \left( \frac{\varphi^{(j)}(\alpha)}{j!} \right)^{m_j}  \]
	These coefficients are inspired by Fa\`a di Bruno's Formula: indeed, if $g \in \Hol(\D)$, then, for every $n \in \N$,
	\[ (g \circ \varphi)^{(n)}(\alpha) = \sum\limits_{\m \in J_n} C_\m^n \, g^{(\module{\m})}(\alpha) = \left( \varphi^\prime (\alpha) \right)^n \, g^{(n)}(\alpha) + \sum\limits_{\m \in K_n} C_\m^n \, g^{(\module{\m})}(\alpha)\]
	Since $g^{(|\m|)}(\alpha)=0$ by (b), we get 
	\[  (g\circ\varphi)^{(n)}(\alpha) =\varphi'(\alpha)^ng^{(n)}(\alpha) + \sum_{\m\in K_n} C_{\m}^ng^{(|\m|)}(\alpha)=\lambda_ng^{(n)}(\alpha). \]
	Thus $P_n(f\circ \varphi)=\lambda_nP_nf$ for all $n\in\N_0$, which implies that $P_nC_\varphi =C_\varphi P_n=\lambda_n P_n$ for all $n\in\N_0$. Thus (a) is proved.\\
	We show inductively that (c) holds for suitable coefficients. It is obvious for $n=0$ and assume that
	$P_k$ has the property for all $k\leq n-1$. Then 
	\begin{eqnarray*}
		P_n f & = & \frac{1}{n!} (f-\sum_{k=0}^{n-1}P_kf)^{(n)}(\alpha)\kappa^n\\
		& = & \frac{1}{n!}\left( f^{(n)}(\alpha)+\sum_{k=0}^{n-1}\left( \sum_{l=0}^kc_{k,l}f^{(l)} (\alpha)\right) (\kappa^k)^{(n)}(\alpha)\right)\kappa^n, 
	\end{eqnarray*} 
	which proves the claim for $n$. \\   
	In order to prove (d), note that by the properties defining the projections and proved previously, 
	for all $k,l\in\N_0$, we have:
	\[  \lambda_kP_l\kappa^k=P_l(\kappa^k\circ\varphi)=\lambda_lP_l\kappa^k.     \] 
	Since $\lambda_k\neq\lambda_l$ for $l\neq k$, it follows that $P_l\kappa^k=0$. Hence $P_lP_k=0$ for $k\neq l$. \\
	It 	remains to show that $P_n^2=P_n$, which is equivalent to check that $P_n\kappa^n=\kappa^n$. We can show this easily and inductively  since $P_k\kappa^n=0$ for $k<n$ and $(\kappa^n)^{(n)}(\alpha)=n!$.  
\end{proof}	
We can now give explicit expressions of $P_n$ for $n=0,1,2,3$.  
\begin{cor}For all $f\in\Hol(\D)$, we have:
\[\begin{array}{l}
	 P_0 f =  f(\alpha)1 \\
	 P_1(f)=  f'(\alpha)\kappa \\ 
	 P_2(f) =  \frac{1}{2}\left(  f''(\alpha)+\frac{\varphi''(\alpha)}{\lambda_2-\lambda_1}f'(\alpha)\right)\kappa^2 \\
	   P_3(f)=\frac{1}{3\!}\left(  f'''(\alpha) +\frac{3\varphi''(\alpha)}{\lambda_2-\lambda_1}f''(\alpha)  + \left(   \frac{\varphi'''(\alpha)}{\lambda_3-\lambda_1} +\frac{3(\varphi''(\alpha))^2}{ (\lambda_1-\lambda_2)(\lambda_1-\lambda_3)}             \right)   f'(\alpha)           \right)\kappa^3   
\end{array}\]
\end{cor}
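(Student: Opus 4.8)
The plan is to simply unwind the inductive definition (\ref{eq:proj}) for $n=0,1,2,3$; the only nontrivial input beyond bookkeeping is the value at $\alpha$ of a few low-order derivatives of $\kappa$ and $\kappa^2$. The cases $n=0$ and $n=1$ require nothing new: $P_0 f = f(\alpha)e_0$ by definition, and since $g=f-f(\alpha)e_0$ satisfies $g'(\alpha)=f'(\alpha)$ (because $e_0'\equiv 0$), formula (\ref{eq:proj}) gives $P_1 f = g'(\alpha)\kappa = f'(\alpha)\kappa$. For $P_2$ and $P_3$ the strategy is to expand $g=f-\sum_{k<n}P_k f$ using the already-computed $P_k$, differentiate $n$ times at $\alpha$, and substitute the derivatives of $\kappa^j$ obtained below.

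The key computation is to evaluate $\kappa''(\alpha)$, $\kappa'''(\alpha)$ and $(\kappa^2)'''(\alpha)$ by differentiating the Schr\"oder equations $\kappa\circ\varphi=\lambda_1\kappa$ and $\kappa^2\circ\varphi=\lambda_2\kappa^2$ at the fixed point, using $\varphi(\alpha)=\alpha$, $\varphi'(\alpha)=\lambda_1$, $\kappa(\alpha)=0$, $\kappa'(\alpha)=1$. Equivalently, one applies the Fa\`a di Bruno identity recorded in the proof of Theorem~\ref{th:31}, which isolates the top-order term $(\varphi'(\alpha))^n(\kappa^j)^{(n)}(\alpha)$; since $\lambda_n=\lambda_1^n\neq\lambda_j$ for $n\neq j$, each resulting linear equation is solvable. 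Carrying this out gives
\[
\kappa''(\alpha)=\frac{\varphi''(\alpha)}{\lambda_1-\lambda_2},\qquad
(\kappa^2)'''(\alpha)=\frac{6\lambda_1\varphi''(\alpha)}{\lambda_2-\lambda_3},
\]
and
\[
\kappa'''(\alpha)=-\frac{\varphi'''(\alpha)}{\lambda_3-\lambda_1}
-\frac{3\lambda_1(\varphi''(\alpha))^2}{(\lambda_3-\lambda_1)(\lambda_1-\lambda_2)} ,
\]
together with the values $(\kappa^j)^{(l)}(\alpha)=\delta_{l,j}\,j!$ for $l\le j$ furnished by Lemma~\ref{lem:deriv_sigma}.

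With these in hand, the case $n=2$ is immediate: from $g=f-f(\alpha)e_0-f'(\alpha)\kappa$ one gets $g''(\alpha)=f''(\alpha)-f'(\alpha)\kappa''(\alpha)$, and substituting $\kappa''(\alpha)$ yields the stated $P_2$. For $n=3$ one writes $g=f-P_0f-P_1f-P_2f$ and computes $g'''(\alpha)=f'''(\alpha)-f'(\alpha)\kappa'''(\alpha)-\tfrac12(\kappa^2)'''(\alpha)\bigl(f''(\alpha)+\tfrac{\varphi''(\alpha)}{\lambda_2-\lambda_1}f'(\alpha)\bigr)$, then multiplies by $\kappa^3/3!$. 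I expect the main obstacle to be purely algebraic: the coefficients produced this way are expressed through $\lambda_1,\lambda_2,\lambda_3$ in a way that must be reconciled with the compact form stated in the corollary. Using $\lambda_2=\lambda_1^2$ and $\lambda_3=\lambda_1^3$ one checks $-\tfrac{3\lambda_1\varphi''(\alpha)}{\lambda_2-\lambda_3}=\tfrac{3\varphi''(\alpha)}{\lambda_2-\lambda_1}$ for the $f''(\alpha)$-coefficient, and for the $f'(\alpha)$-coefficient one must verify the identity
\[
\frac{3\lambda_1}{(\lambda_3-\lambda_1)(\lambda_1-\lambda_2)}
-\frac{3\lambda_1}{(\lambda_2-\lambda_3)(\lambda_2-\lambda_1)}
=\frac{3}{(\lambda_1-\lambda_2)(\lambda_1-\lambda_3)},
\]
which follows after clearing denominators and substituting $\lambda_j=\lambda_1^j$. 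This reconciliation, rather than any conceptual step, is where the care lies.
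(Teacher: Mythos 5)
Your proposal is correct: the derivative values $\kappa''(\alpha)=\frac{\varphi''(\alpha)}{\lambda_1-\lambda_2}$, $\kappa'''(\alpha)$ and $(\kappa^2)'''(\alpha)$ obtained by differentiating the Schr\"oder equations at $\alpha$ are right, and the two algebraic identities you flag do hold once $\lambda_j=\lambda_1^j$ is substituted, so unwinding (\ref{eq:proj}) yields exactly the stated formulas. The paper states this corollary without proof as a direct computation from Theorem~\ref{th:31}, and your argument is precisely that computation, so it matches the paper's (implicit) approach.
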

A natural question concerns the density of $\span\{\kappa^n:n\in\N_0\}$ in $\Hol(\D)$. The following proposition gives the answer. 
\begin{prop}\label{prop:density}
	The space
	 $\span\{\kappa^n:n\in\N_0\}$ is dense in the Fr\'echet space $\Hol(\D)$ if and only if $\varphi$ is univalent. 
\end{prop}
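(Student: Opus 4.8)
The plan is to split the statement into two equivalences: first that $\span\{\kappa^n:n\in\N_0\}$ is dense in $\Hol(\D)$ if and only if $\kappa$ is univalent, and second that $\kappa$ is univalent if and only if $\varphi$ is univalent. Throughout I use that $\span\{\kappa^n:n\in\N_0\}$ coincides with the algebra of polynomials in $\kappa$, that is, with $\{P\circ\kappa:P\in\C[z]\}$.

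\emph{Reduction to univalence of $\kappa$.} If $\kappa$ is not univalent, I choose $z_1\neq z_2$ in $\D$ with $\kappa(z_1)=\kappa(z_2)$. Every $h=P\circ\kappa$ then satisfies $h(z_1)=h(z_2)$, so the whole span is annihilated by the functional $f\mapsto f(z_1)-f(z_2)$, which is continuous on $\Hol(\D)$ and nonzero (it does not vanish on $e_1$). Hence the closure of the span lies in a proper closed hyperplane and cannot be dense. Conversely, if $\kappa$ is univalent then $\Omega:=\kappa(\D)$ is an open, simply connected subset of $\C$ and $\kappa:\D\to\Omega$ is biholomorphic; consequently $C_\kappa:\Hol(\Omega)\to\Hol(\D)$, $f\mapsto f\circ\kappa$, is a topological isomorphism (with inverse $g\mapsto g\circ\kappa^{-1}$), and it carries the polynomials in $\Hol(\Omega)$ exactly onto $\span\{\kappa^n:n\in\N_0\}$. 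Since $\C\setminus\Omega$ has connected complement in the sphere, Runge's theorem gives that polynomials are dense in $\Hol(\Omega)$, and transporting this back through the isomorphism $C_\kappa$ yields density of $\span\{\kappa^n:n\in\N_0\}$ in $\Hol(\D)$.

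\emph{Univalence of $\kappa$ versus univalence of $\varphi$.} One implication is immediate from the Schr\"oder equation $\kappa\circ\varphi=\lambda_1\kappa$ with $\lambda_1\neq 0$: if $\kappa$ is univalent and $\varphi(z_1)=\varphi(z_2)$, then $\lambda_1\kappa(z_1)=\lambda_1\kappa(z_2)$, so $z_1=z_2$, i.e.\ $\varphi$ is univalent. For the converse I would use the approximation of Koenigs' eigenfunction by normalized iterates, $\kappa=\lim_{n\to\infty}(\varphi_n-\alpha)/\lambda_1^{\,n}$ locally uniformly, where the shift and normalization center the limit at $\alpha$. When $\varphi$ is univalent each iterate $\varphi_n=\varphi\circ\cdots\circ\varphi$ is univalent, hence so is each normalized iterate $(\varphi_n-\alpha)/\lambda_1^{\,n}$. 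A locally uniform limit of univalent functions on a domain is either univalent or constant (Hurwitz's theorem); since $\kappa'(\alpha)=1$, the function $\kappa$ is non-constant, and therefore univalent.

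Combining the two equivalences gives that $\span\{\kappa^n:n\in\N_0\}$ is dense in $\Hol(\D)$ if and only if $\varphi$ is univalent. The main obstacle is the implication ``$\varphi$ univalent $\Rightarrow$ $\kappa$ univalent'': it relies on realizing $\kappa$ as a locally uniform limit of univalent maps and on the Hurwitz-type stability of univalence under such limits, where one must take care that the normalization centers $\kappa$ at the fixed point $\alpha$ and that $\kappa'(\alpha)=1$ excludes the constant alternative. The remaining steps (the separating-functional argument and the Runge/biholomorphism argument) are comparatively routine.
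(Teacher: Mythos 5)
Your proof is correct, and its skeleton matches the paper's: in both, non-univalence of $\kappa$ precludes density because every element of $\span\{\kappa^n:n\in\N_0\}$ (hence of its closure) takes equal values at two distinct points --- the paper leaves this separating-functional step essentially implicit, while you spell it out; and for the sufficiency both arguments pass to $\Omega=\kappa(\D)$, invoke Runge's theorem on the simply connected domain $\Omega$ to get density of polynomials in $\Hol(\Omega)$, and transport this back through the isomorphism $f\mapsto f\circ\kappa$. The genuine difference lies in the equivalence ``$\varphi$ univalent $\iff$ $\kappa$ univalent'': the paper simply cites Shapiro's survey \cite{shap98} for it, whereas you prove it --- the easy direction from the Schr\"oder equation $\kappa\circ\varphi=\lambda_1\kappa$ with $\lambda_1\neq 0$, and the hard direction by realizing $\kappa$ as the locally uniform limit of the normalized iterates $(\varphi_n-\alpha)/\lambda_1^n$ (you correctly insert the shift by $\alpha$, which the paper's introductory formula $\varphi_n/\lambda^n$ omits and which is only valid for $\alpha=0$) and applying the Hurwitz-type principle that such a limit of univalent maps is univalent or constant, with $\kappa'(\alpha)=1$ excluding constancy. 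This buys a self-contained proof where the paper outsources a key step. One small slip: you write that ``$\C\setminus\Omega$ has connected complement in the sphere,'' but what Runge's theorem needs (and what simple connectivity of $\Omega$ provides) is that the complement of $\Omega$ itself in the Riemann sphere is connected; as literally phrased your sentence asserts something different (and, for bounded $\Omega$, false), though this is plainly a misstatement rather than a gap in the argument.
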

\begin{proof}
The function $\varphi$ is univalent if and only if $\kappa$ is univalent (see \cite{shap98}). Thus univalence  of $\varphi$ is necessary for the density of  $\span\{\kappa^n:n\in\N_0\}$. Conversely, assume that $\kappa$ is univalent. Then  	$\Omega:=\kappa(\D)$ is a simply connected domain. It follows from Runge's theorem (see \cite[Chap.~13 $\S$~1 Section~2]{Re}) that the algebra ${\mathcal A}^{(\Omega)}$ of all polynomials on $\Omega$ is dense in $\Hol(\Omega)$. Composition by  $\kappa$ shows that  $\span\{\kappa^n:n\in\N_0\}$ is dense in $\Hol(\D)$. 
 \end{proof}
We consider two illustrations. 	
\begin{exam}\label{ex:1}
\begin{enumerate}
	\item[(a)] Consider the univalent Schr\"oder symbol $\varphi(z)=\frac{z}{2-z}$. The Koenigs eigenfunction is $\kappa(z)=\frac{z}{1-z}$ and $\Omega=\kappa(\D)=\{z\in\C:\Re (z)>-1/2\}$.
	\item[(b)] 	Let $\varphi(z)=z\frac{z+1/2}{1+z/2}$ which satisfies $\varphi(0)=0$ and $\varphi'(0)=1/2$. Since $\kappa\circ \varphi(z)=\kappa(z)/2$, it follows that $\kappa(0)=0=\kappa(-1/2)$, which obviously contradics the density of   $\span\{ \kappa^n:n\in\N_0\}$ in the Fr\'echet space $\Hol(\D)$.  
	\end{enumerate}
\end{exam}

\section{The spectrum of composition operators on $\Hol(\D)$}\label{sec:4}
In this section we determine the spectrum of $C_\varphi$  on the Fr\'echet space $\Hol(\D)$. We suppose throughout that $\varphi:\D\to\D$ is a holomorphic map, $\varphi\not\in \Aut(\D)$, with an interior fixed point $\varphi(\alpha)=\alpha\in\D$, and that 
$0<|\varphi'(\alpha)|<1$. 
The case where $\varphi'(\alpha)=0$ is treated at the very end of this section. 
We let $\lambda_n=\varphi'(\alpha)^n, n\in\N_0$. 
 By $\sigma(C_\varphi)$ (resp. $\sigma_p(C_\varphi)$) we denote the \emph{spectrum} (resp. \emph{point spectrum}) of $C_\varphi$, that is the set $\{\lambda\in\C:\lambda\Id -C_\varphi\mbox{ is not bijective}\}$ (resp. $\{\lambda\in\C:\lambda\Id -C_\varphi\mbox{ is not injective}\}$).
 Note that for $\lambda\not\in \sigma(C_\varphi)$, $(\lambda\Id-C_\varphi)^{-1}$ is a continuous linear operator on $\Hol(\D)$ (by the closed graph theorem). 

Since $\varphi\not\in \Aut(\D)$, we already know  that $0\in\sigma(C_\varphi)$, by Corollary~\ref{cor:aut}. Moreover, by Koenigs' theorem,   
\[\sigma_p(C_\varphi)=\{\lambda_n:n\in\N_0\}.\]
Now we show that the entire spectrum $\sigma(C_\varphi)$ is equal to  $\sigma_p (C_\varphi)\cup\{0\}$. This is surprising for several reasons. First of all, the operator  $C_\varphi$  is not compact in general (see Theorem~\ref{th:compact}). Nonetheless its spectral properties on $\Hol(\D)$ are exactly those of a compact operator (see  \cite{vasil} for the Riesz theory for compact operators on Fr\'echet spaces which is the same as for Banach spaces). The other surprise comes from the well developed spectral theory of ${C_\varphi}_{|X}$ for invariant Banach space $X\hookrightarrow\Hol(\D)$, which shows in particular that, on $X$, the spectrum is much larger in general (see Section~\ref{sec:5}).        
\begin{thm}\label{th:34}
One has 
 \[\sigma(C_\varphi)=\{0\}\cup \{\varphi'(\alpha)^n:n\in\N_0\}.\]    
\end{thm}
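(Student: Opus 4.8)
The plan is to prove the two inclusions $\sigma_p(C_\varphi)\cup\{0\}\subseteq\sigma(C_\varphi)$ and $\sigma(C_\varphi)\subseteq\{0\}\cup\{\lambda_n:n\in\N_0\}$. The first inclusion is already in hand: we know $0\in\sigma(C_\varphi)$ by Corollary~\ref{cor:aut} (since $\varphi\not\in\Aut(\D)$), and $\{\lambda_n:n\in\N_0\}=\sigma_p(C_\varphi)\subseteq\sigma(C_\varphi)$ by Koenigs' theorem. So the entire content is the reverse inclusion: for $\lambda\neq 0$ with $\lambda\neq\lambda_n$ for all $n$, I must show that $\lambda\Id-C_\varphi$ is bijective on $\Hol(\D)$, i.e.\ that the nonhomogeneous Schr\"oder equation $f\circ\varphi-\lambda f=g$ has a unique solution $f\in\Hol(\D)$ for every $g$.

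The key tool will be the rank-one projections $P_n$ from Theorem~\ref{th:31}, together with the decomposition from Corollary~\ref{cor:33}. The idea is to solve the equation component-by-component along the spectral directions. First I would record that $P_n(\lambda\Id-C_\varphi)=(\lambda-\lambda_n)P_n$, which follows from $P_nC_\varphi=\lambda_nP_n$. Applying $P_n$ to $f\circ\varphi-\lambda f=g$ (rewritten as $(\lambda\Id-C_\varphi)f=-g$, or with the appropriate sign), one obtains $(\lambda-\lambda_n)P_nf=-P_ng$, so that necessarily $P_nf=\frac{-1}{\lambda-\lambda_n}P_ng$ for each $n$; here $\lambda-\lambda_n\neq 0$ by hypothesis. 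This shows uniqueness immediately once one knows that an element $h\in\Hol(\D)$ with $P_nh=0$ for all $n$ must vanish. The natural candidate for the solution is then the formal series $f=\sum_{n=0}^\infty \frac{-1}{\lambda-\lambda_n}P_ng$, and the main work is to show this converges in $\Hol(\D)$ and indeed solves the equation.

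The hard part will be the convergence and the injectivity/separation issue, and these hinge on controlling the growth of $P_ng$ and of $\frac{1}{\lambda-\lambda_n}$. Since $|\lambda_n|=|\varphi'(\alpha)|^n\to 0$, for $\lambda\neq 0$ the factors $\frac{1}{\lambda-\lambda_n}$ are bounded (they converge to $1/\lambda$), so they cause no trouble; the delicate estimate is that the partial sums $Q_Ng=\sum_{n=0}^N P_ng$ converge to $g$ in $\Hol(\D)$ as $N\to\infty$. This is the genuine analytic obstacle: by Corollary~\ref{cor:33} and part~(b) of Theorem~\ref{th:31}, $g-Q_Ng\in\Hol_N(\alpha)$ vanishes to order $N$ at $\alpha$, and one must upgrade this high-order vanishing at the single point $\alpha$ into uniform smallness on compact subsets of $\D$. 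I expect this to require a quantitative estimate on $\|P_ng\|$ on compacta, most likely via the explicit Koenigs construction $\kappa=\lim \varphi_n/\lambda_1^n$ and Cauchy-type bounds relating $g^{(n)}(\alpha)$, the coefficients $c_{n,m}$ of part~(c), and the sup-norm of $\kappa^n$ on compact sets.

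Once convergence is established, verifying that $f=\sum_n\frac{-1}{\lambda-\lambda_n}P_ng$ solves $(\lambda\Id-C_\varphi)f=-g$ is formal: apply $\lambda\Id-C_\varphi$ term by term, using $(\lambda\Id-C_\varphi)P_n=(\lambda-\lambda_n)P_n$ and the continuity of $C_\varphi$, to collapse the series back to $\sum_n P_ng=g$. Surjectivity and injectivity then both follow, giving $\lambda\notin\sigma(C_\varphi)$ and completing the reverse inclusion. Finally, for the degenerate case $\varphi'(\alpha)=0$ flagged for the end of the section, the argument simplifies since then $\lambda_n=0$ for $n\geq 1$ and $\{\lambda_n:n\in\N_0\}=\{0,1\}$, so $\sigma(C_\varphi)=\{0,1\}$ should be handled separately using the $\lambda_1=0$ branch of Koenigs' theorem.
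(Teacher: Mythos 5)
Your first inclusion and the reduction to surjectivity of $\lambda\Id-C_\varphi$ are fine, and so is the observation that any solution must satisfy $P_nf=\frac{-1}{\lambda-\lambda_n}P_ng$ (with the resulting uniqueness: $P_nh=0$ for all $n$ forces $h^{(l)}(\alpha)=0$ for all $l$ by Theorem~\ref{th:31}(b), hence $h=0$). But the step you yourself flag as ``the genuine analytic obstacle'' is not merely delicate, it is false: the reconstruction $g=\sum_n P_ng$, i.e.\ $Q_Ng\to g$ in $\Hol(\D)$, fails in general, and no quantitative estimate can rescue it. Every partial sum $Q_Ng$ lies in $\span\{\kappa^m:m\in\N_0\}$, so any limit of your series lies in the closure of that span; by Proposition~\ref{prop:density} this closure equals $\Hol(\D)$ only when $\varphi$ is univalent. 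Example~\ref{ex:1}(b) makes the obstruction explicit: there $\kappa(0)=\kappa(-1/2)=0$, so every $h$ in the closed span satisfies $h(0)=h(-1/2)$, and $\sum_n P_ng$ cannot converge to any $g$ with $g(0)\neq g(-1/2)$. Worse, the closed span is $C_\varphi$-invariant (since $\kappa^n\circ\varphi=\lambda_n\kappa^n$), so applying $\lambda\Id-C_\varphi$ to any candidate $f$ built from the $\kappa^n$ again lands in the closed span; surjectivity can therefore never be proved this way for non-univalent $\varphi$. (This does not contradict uniqueness: the numbers $\langle\Psi_n,g\rangle$ determine $g$ through its Taylor coefficients at $\alpha$, but $g$ need not equal the sum of its ``spectral series'' --- and even for univalent $\varphi$, density of the span does not by itself give convergence of that particular series.)

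The paper's proof keeps your finite-dimensional diagonal idea but uses only \emph{finitely many} projections, which is what makes it work. Given $\lambda\notin\{0\}\cup\{\lambda_n:n\in\N_0\}$, fix $n$ with $|\lambda_{n+1}|<|\lambda|$ and split $g=g_1+g_2$ with $g_1\in\rg Q_n$ and $g_2\in\Hol_n(\alpha)$ (Corollary~\ref{cor:33}). The part $g_1$ is handled by linear algebra on the diagonal operator ${C_\varphi}_{|\rg Q_n}$. For $g_2$ the high-order vanishing at $\alpha$ is exploited in a completely different way: reducing to $\alpha=0$ by M\"obius conjugation, one has $|g_2(z)|\leq B|z|^{n+1}$ and $|\varphi_k(z)|\leq q^k|z|$ near $0$, where $|\lambda_1|<q<1$ is chosen with $q^{n+1}<|\lambda|$, so the Neumann-type series $f_0(z)=\sum_{k\geq 0}g_2(\varphi_k(z))/\lambda^{k+1}$ --- built from the iterates of $\varphi$, not from the projections --- converges on a small disc by comparison with the geometric series $\left(q^{n+1}/|\lambda|\right)^k$ and solves $\lambda f_0-f_0\circ\varphi=g_2$ there. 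A separate extension lemma (Lemma~\ref{lem:35}), based on the Schwarz-lemma fact that $\psi(\rho'\D)\subset\rho\D$ for some $\rho'>\rho$, then propagates this local solution to all of $\D$. If you want to salvage your write-up, this is the structure to adopt: finitely many $P_n$'s to kill the low-order Taylor coefficients at $\alpha$, then iteration of $\varphi$ plus analytic continuation, rather than an infinite spectral expansion.
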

In order to prove the surjectivity of $C_\varphi -\lambda\Id$ on $\Hol(\D)$ for a complex number $\lambda\not\in  \{0\}\cup \{\varphi'(\alpha)^n:n\in\N_0\}$, we will use the following lemma.
\begin{lem}\label{lem:35}
	Let $\psi:\D\to \D$  be holomorphic, $\psi\not\in\Aut(\D)$, such that $\psi(0)=0$. Let  $g\in\Hol(\D)$ and $\lambda\in\C\setminus\{0\}$.  Assume that there exist $0<\varepsilon<1$ and $f\in\Hol(\varepsilon \D)$ such that 
	\[  \lambda f-f\circ \psi =g\mbox{ on }\varepsilon \D.  \]
	Then $f$ has an extension $\tilde{f}\in\Hol(\D)$ such that 
	\[  \lambda \tilde{f}- \tilde{f}\circ \psi =g \mbox{ on }\D.  \] 
\end{lem}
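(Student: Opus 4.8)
The plan is to extend $f$ outward from $\varepsilon\D$ by repeatedly using the functional equation to pull values back toward the fixed point $0$, where the contraction lets us land inside the region $\varepsilon\D$ on which $f$ is already known. Since $\psi(0)=0$ and $\psi\notin\Aut(\D)$, the Schwarz lemma gives $|\psi(z)|<|z|$ for every $z\in\D\setminus\{0\}$; by the maximum principle the quantity $\rho(r):=\max_{|z|\le r}|\psi(z)|$ then satisfies $\rho(r)<r$ for each $0<r<1$. Writing $\psi_n=\psi\circ\cdots\circ\psi$ ($n$ factors, $\psi_0=\Id$), I would first record that for every fixed $r<1$ the numbers $\rho_N(r):=\max_{|z|\le r}|\psi_N(z)|$ decrease to $0$: indeed $\rho_N(r)\le\rho(\rho_{N-1}(r))<\rho_{N-1}(r)$, so the sequence is decreasing and its limit $L$ satisfies $\rho(L)=L$ by continuity of $\rho$, forcing $L=0$. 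In particular, for each $r<1$ there is an integer $N=N(r)$ with $\psi_N(r\D)\subset\varepsilon\D$.

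Next I would iterate the equation $\lambda f-f\circ\psi=g$. On $\varepsilon\D$ one has $\psi_k(\varepsilon\D)\subset\varepsilon\D$, so substituting the equation into itself $N$ times yields, on $\varepsilon\D$,
\[ f=\sum_{k=0}^{N-1}\frac{g\circ\psi_k}{\lambda^{k+1}}+\frac{f\circ\psi_N}{\lambda^{N}}. \]
The point is that the right-hand side makes sense on a much larger disc: for $z\in r\D$ the terms $g\circ\psi_k$ are holomorphic on all of $\D$, while the remainder $f\circ\psi_N$ is holomorphic on $r\D$ provided $\psi_N(r\D)\subset\varepsilon\D$, which holds for $N=N(r)$. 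I would therefore define
\[ \tilde f(z):=\sum_{k=0}^{N-1}\frac{g(\psi_k(z))}{\lambda^{k+1}}+\frac{f(\psi_N(z))}{\lambda^{N}},\qquad z\in r\D, \]
with $N=N(r)$, and check that this is independent of the admissible choice of $N$.

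The consistency in $N$ is where the equation is used again: passing from $N$ to $N+1$ changes the formula by $\lambda^{-(N+1)}g\circ\psi_N+\lambda^{-(N+1)}f\circ\psi_{N+1}-\lambda^{-N}f\circ\psi_N$, and composing the identity $f\circ\psi-\lambda f=-g$ (valid on $\varepsilon\D$) with $\psi_N$---legitimate since $\psi_N(r\D)\subset\varepsilon\D$---shows that this difference vanishes. Hence the local definitions agree on the nested discs $r\D$ and glue to a single $\tilde f\in\Hol(\D)$; taking $r\in(\varepsilon,1)$ and invoking the iterated identity above shows $\tilde f=f$ on $\varepsilon\D$. Finally, to verify $\lambda\tilde f-\tilde f\circ\psi=g$ on $\D$, I would fix $z\in r\D$ (so that $\psi(z)\in r\D$ as well), substitute the defining formula, and observe that the sum $\sum_k\lambda^{-k}\,g\circ\psi_k$ telescopes against its shift while the two remainder contributions cancel by one further application of the equation at the point $\psi_N(z)\in\varepsilon\D$; the net result is exactly $g(z)$.

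I expect the main obstacle to be the uniform control of the iterates---establishing $\rho_N(r)\to0$ so that the remainder $f\circ\psi_N$ is defined on the enlarged disc---since everything else is the telescoping bookkeeping that rests on it. The hypotheses $\psi(0)=0$ and $\psi\notin\Aut(\D)$ enter precisely here, through the strict Schwarz inequality, and the restriction $\lambda\neq 0$ is what makes the finite sum meaningful.
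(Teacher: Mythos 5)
Your proof is correct, and it takes a genuinely different route from the paper's. The paper argues by a maximal-radius bootstrap: it sets $\rho:=\sup\{r\in[\varepsilon,1]:f\mbox{ extends holomorphically to }r\D\}$, observes via the identity theorem that the extension still satisfies $\lambda\tilde{f}-\tilde{f}\circ\psi=g$ on $\rho\D$, uses the strict Schwarz inequality (available since $\psi(0)=0$ and $\psi\not\in\Aut(\D)$) to find $\rho'>\rho$ with $\psi(\rho'\D)\subset\rho\D$, and then notes that $\tilde{f}=\lambda^{-1}(\tilde{f}\circ\psi+g)$ is holomorphic on $\rho'\D$, contradicting maximality unless $\rho=1$. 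You instead build the extension explicitly from the $N$-fold iterated identity $f=\sum_{k=0}^{N-1}\lambda^{-(k+1)}\,g\circ\psi_k+\lambda^{-N}f\circ\psi_N$, which makes sense on $r\D$ as soon as $\psi_N(r\D)\subset\varepsilon\D$; your consistency-in-$N$, gluing, and telescoping verifications are all sound. The price of your route is the extra dynamical input that the iterates $\psi_N$ tend to $0$ uniformly on compact subsets of $\D$; the payoff is a closed formula for $\tilde{f}$, in the same spirit as the Neumann-type series used in the paper's proof of Theorem~\ref{th:34}, so your lemma and that proof merge into one computation, whereas the paper's argument is shorter but non-constructive. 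One small repair is needed in your dynamical step: $\rho_N(r)\le\rho(\rho_{N-1}(r))$ is only an inequality (the image $\psi_{N-1}(r\overline{\D})$ need not fill the closed disc of radius $\rho_{N-1}(r)$), so you may not conclude that the limit $L$ satisfies $\rho(L)=L$; but passing to the limit in your displayed inequality gives $L\le\rho(L)$, and strict Schwarz plus compactness give $\rho(L)<L$ whenever $L>0$, so $L=0$ follows all the same.
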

\begin{proof}
Let $\rho:=\sup\{  r\in [\varepsilon,1]:f\mbox{ has an analytic extension on }r\D \}$. We show that $\rho=1$. Assume that $\rho<1$. Then there exists $\tilde{f}\in\Hol(\rho\D)$, a holomorphic extension of $f$, satisfying:
 \[  \lambda \tilde{f}- \tilde{f}\circ \psi =g \mbox{ on }\varepsilon\D.  \] 	
 Since both sides are holomorphic, by the uniqueness theorem, the identity remains true on $\rho\D$. 
 Note that by the Schwarz lemma $\psi(r\D)\subset r\D$ for all $0<r<1$. 
 It follows also from the Schwarz lemma  that there exists $\rho<\rho'\leq 1$ such that $\psi(\rho'\D)\subset \rho\D$.  Indeed, otherwise we find $(z_n)_n\in\D$, $|z_n|\downarrow \rho$ such that $|\psi(z_n)|>\rho$. Taking a  subsequence we may assume that $z_n\to z$ and then $|z|=\rho$ and $|\psi(z)|\geq \rho$. This is not possible since $\psi$ is not an automorphism. Now, since 
 \[  \lambda \tilde{f}= \tilde{f}\circ \psi +g \mbox{ on }\rho \D,  \]
 and since $\psi(\rho'\D)\subset \rho \D$, it follows that $f$ has a holomorphic extension to $\rho'\D$, a contradiction to the choice of $\rho$.   
\end{proof}	
\begin{proof}[Proof of Theorem~\ref{th:34}]
\emph{First case}: $\alpha=0$. Let 	$\lambda\in\C$ and $\lambda\not\in \{0\}\cup \{\lambda_n:n\in\N_0 \}$. From Koenigs' theorem we know 
that $\lambda\Id -C_\varphi$ is injective. Thus we only have to prove the surjectivity. Let $g\in\Hol(\D)$ and choose $n\in\N_0$ such that $|\lambda_{n+1}|<|\lambda|$. Since by Corollary~\ref{cor:33}, $\Hol(\D)=\rg Q_n \oplus \Hol_n(0)$ we can write $g=g_1+g_2$ where $g_1\in \rg Q_n$ and $g_2\in\Hol_n(0)$. Since ${C_\varphi}_{|\rg Q_n}$ is a diagonal operator and $\lambda\not\in\sigma({C_\varphi}_{|\rg Q_n})$, there exists $f_1\in \rg Q_n$ such that $\lambda f_1-f_1\circ\varphi=g_1$. Next we look at $g_2$. Choose $|\lambda_1|<q<1$ such that $q^{n+1}<|\lambda|$. Since $\lim_{z\to 0}\frac{\varphi(z)}{z}=\lambda_1$, there exists $0<\varepsilon\leq 1$ such that $|\varphi(z)|\leq q|z|$ for $|z|<\varepsilon$. 
Consider the iterates $\varphi_k:=\varphi\circ\cdots\circ\varphi$  ($k$ times) of $\varphi$. Then $|\varphi_k(z)|\leq q^k|z|\leq q^k\varepsilon$ for $|z|<\varepsilon$.  Since $g_2\in\Hol_n(0)$, there exists $B\geq 0$ such that 
\[  |g_2(z)|\leq B |z|^{n+1}\mbox{ for }|z|<\varepsilon.  \]        
Hence for $k\in\N_0$, $|z|<\varepsilon$, 
 \begin{eqnarray*}
\left|  \frac{g_2(\varphi_k(z))}{\lambda^{k+1}}   \right|  & \leq & \frac{1}{|\lambda|}B\frac{|\varphi_k(z)|^{n+1}}{|\lambda|^k}\\
                                     & \leq & \frac{1}{|\lambda|}B \frac{q^{k(n+1)}}{|\lambda|^k}\varepsilon\\
                                     & \leq & \frac{B\varepsilon}{|\lambda|} \left(     \frac{q^{n+1}}{|\lambda|}  \right)^k.   	
\end{eqnarray*}  
Since $\frac{q^{n+1}}{|\lambda|}<1$, the series $f_0(z):=\sum_{k=0}^\infty \frac{g_2(\varphi_k(z))}{\lambda^{k+1}}$ converges uniformly on $\varepsilon \D$ and defines a function $f_0\in \Hol(\varepsilon\D)$. Moreover, since $\varphi(\varepsilon \D)\subset \varepsilon \D$, 
\begin{eqnarray*}
f_0(\varphi(z)) & = & \sum_{k=0}^\infty \frac{g_2(\varphi_{k+1}(z))}{\lambda^{k+1}} \\
 & = &  \lambda \sum_{k=1}^\infty \frac{g_2(\varphi_k (z))}{\lambda^{k+1}}\\
 & =   & \lambda f_0(z) -g_2(z) 	
\end{eqnarray*}
on $\varepsilon\D$. 
It follows from Lemma~\ref{lem:35} that $f_0$ has a holomorphic extension $f\in\Hol(\D)$ satisfying $\lambda f-f\circ \varphi =g_2$. This shows that $\lambda\not\in \sigma(C_\varphi)$ in the case $\alpha=0$. \\
\emph{Second case}: $\alpha\in\D$, $\alpha\neq 0$.  Consider the M\"obius transform $\psi_\alpha:\D\to\D$ defined by $\psi_\alpha(z)=\frac{\alpha-z}{1-\overline{\alpha}z}$ and note  that $\psi_\alpha(0)=\alpha$ and $\psi_\alpha=\psi_\alpha^{-1}$.  Then 
$\tilde{\varphi}:=\psi_\alpha\circ \varphi\circ \psi_\alpha$ maps $\D$ into $\D$ and satisfies $\tilde{\varphi}(0)=0$. Since \[C_{\tilde{\varphi}}=C_{\psi_\alpha}C_\varphi C_{\psi_\alpha}=  C_{\psi_\alpha}C_\varphi C_{\psi_\alpha}^{-1},\]
the operators $C_{\tilde{\varphi}}$ and $C_\varphi$ are similar. From the first case we deduce that  
\[ \sigma(C_\varphi)\setminus \{0\}=\sigma( C_{\tilde{\varphi}})\setminus \{0\}=\sigma_p (C_{\tilde{\varphi}})\setminus\{0\}  = \sigma_p (C_{{\varphi}})\setminus\{0\} =\{ \lambda_n:n\in\N_0 \}. \]
 \end{proof}

For later purposes we extract the following lemma from the previous proof. 
\begin{lem}\label{lem:4.3}
	Let $n\in\N_0$, $\lambda\in\C$ such that $|\lambda|>|\lambda_{n+1}|$. Then for each $g\in\Hol_n(\alpha)$, there exists a unique $f\in\Hol_n(\alpha)$ solving the inhomogeneous Schr\"oder equation $\lambda f-f\circ \varphi =g$.   
\end{lem}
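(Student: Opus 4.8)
The plan is to recognize that Lemma~\ref{lem:4.3} is a self-contained sharpening of the surjectivity argument already carried out in the proof of Theorem~\ref{th:34}, assembled from three ingredients: reduction to $\alpha=0$, the explicit geometric-series solution, and a uniqueness argument drawn from Koenigs' theorem. First I would reduce to the case $\alpha=0$ exactly as in the second case of the proof of Theorem~\ref{th:34}, conjugating by the involutive M\"obius map $\psi_\alpha$ and setting $\tilde{\varphi}=\psi_\alpha\circ\varphi\circ\psi_\alpha$, so that $\tilde{\varphi}(0)=0$ and $\tilde{\varphi}'(0)=\varphi'(\alpha)$; in particular the numbers $\lambda_m$ are unchanged. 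The extra observation needed here is that $C_{\psi_\alpha}$ maps $\Hol_n(\alpha)$ isomorphically onto $\Hol_n(0)$: if $h$ vanishes to order $n+1$ at $\alpha$, then $h\circ\psi_\alpha$ vanishes to order $n+1$ at $0$ since $\psi_\alpha(0)=\alpha$ and $\psi_\alpha'(0)\neq 0$, and $\psi_\alpha$ is its own inverse. As $C_{\psi_\alpha}(f\circ\varphi)=C_{\psi_\alpha}(f)\circ\tilde{\varphi}$, the equation $\lambda f-f\circ\varphi=g$ with $f,g\in\Hol_n(\alpha)$ becomes $\lambda F-F\circ\tilde{\varphi}=G$ with $F=f\circ\psi_\alpha$ and $G=g\circ\psi_\alpha\in\Hol_n(0)$, so it suffices to treat $\alpha=0$.

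For existence with $\alpha=0$ I would reproduce the series construction of Theorem~\ref{th:34}, now applied directly to $g$ (which plays the role of the component $g_2$, since $g\in\Hol_n(0)=\ker Q_n$ has no $\rg Q_n$ part). Using the hypothesis $|\lambda|>|\lambda_{n+1}|=|\lambda_1|^{n+1}$ I choose $|\lambda_1|<q<1$ with $q^{n+1}<|\lambda|$, which is possible by continuity, and $\varepsilon>0$ with $|\varphi(z)|\leq q|z|$ on $\varepsilon\D$; since $g\in\Hol_n(0)$ gives $|g(z)|\leq B|z|^{n+1}$ near $0$, the same geometric estimate shows that $f_0(z)=\sum_{k=0}^\infty g(\varphi_k(z))/\lambda^{k+1}$ converges in $\Hol(\varepsilon\D)$ and solves $\lambda f_0-f_0\circ\varphi=g$ there. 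Each summand $g\circ\varphi_k$ lies in $\Hol_n(0)$, whence $f_0^{(l)}(0)=0$ for $l\leq n$, so the holomorphic extension $f\in\Hol(\D)$ furnished by Lemma~\ref{lem:35} again belongs to $\Hol_n(0)$ and solves the equation on all of $\D$. Undoing the conjugation returns a solution in $\Hol_n(\alpha)$.

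Uniqueness is the step where the restriction to $\Hol_n(\alpha)$ really matters, and I expect it to be the only genuinely new point. If $f\in\Hol_n(\alpha)$ satisfies $\lambda f-f\circ\varphi=0$, then $f$ is an eigenfunction of $C_\varphi$ for $\lambda$, so by Koenigs' Theorem~\ref{th:koenigs} a nonzero $f$ forces $\lambda=\lambda_m$ for some $m\in\N_0$ and $f=c\,\kappa^m$. For $m\geq n+1$ this is impossible, since $|\lambda_m|\leq|\lambda_{n+1}|<|\lambda|$, and $\lambda\neq 0$ because $|\lambda|>|\lambda_{n+1}|>0$ ($\lambda_1\neq 0$). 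For $m\leq n$ the value $\lambda=\lambda_m$ is admissible under the hypothesis, \emph{but} then $\kappa^m\notin\Hol_n(\alpha)$, since $(\kappa^m)^{(m)}(\alpha)=m!\neq 0$ by Lemma~\ref{lem:deriv_sigma}. In every case $f=0$. The main subtlety to flag is precisely this last case: even when $\lambda$ is a genuine eigenvalue of $C_\varphi$ on $\Hol(\D)$, its eigenspace $\span\{\kappa^m\}$ is annihilated by the constraint $f\in\Hol_n(\alpha)$, which is exactly why one recovers a \emph{unique} solution in this subspace, and not merely a solution modulo the eigenfunctions.
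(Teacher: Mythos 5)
Your proof is correct, and it runs on the same core machinery as the paper's: the geometric-series solution from the proof of Theorem~\ref{th:34}, the extension Lemma~\ref{lem:35}, conjugation by $\psi_\alpha$, and Koenigs' theorem plus Lemma~\ref{lem:deriv_sigma} for uniqueness (your uniqueness argument is essentially verbatim the paper's). The one place you genuinely diverge is in how the solution is forced into $\Hol_n(\alpha)$. The paper first produces \emph{some} solution $f\in\Hol(\D)$ and then corrects it: since $Q_n$ commutes with $C_\varphi$ and $Q_ng=0$, the function $f_1=f-Q_nf\in\ker Q_n=\Hol_n(\alpha)$ still solves the equation. You instead observe that each summand $g\circ\varphi_k$ of the series already vanishes to order $n+1$ at the fixed point, so the series solution (and hence its extension) lies in $\Hol_n(0)$ termwise, and that $C_{\psi_\alpha}$ carries $\Hol_n(\alpha)$ onto $\Hol_n(0)$ because composition with a locally invertible map preserves vanishing order --- which also lets you replace the paper's Fa\`a di Bruno computation for $h=g\circ\psi_\alpha$ by a one-line argument. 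Your version is slightly more self-contained (it does not invoke the projections $Q_n$ of Corollary~\ref{cor:33} at all), while the paper's $Q_n$-correction is more robust in that it converts \emph{any} solution in $\Hol(\D)$ into one in $\Hol_n(\alpha)$ without inspecting how that solution was built; both are sound.
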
 
\begin{proof}
Since $\kappa^k \not\in\Hol_n(\alpha)$ for $k\in \{0,1,\cdots,n\}$, uniqueness follows from Koenigs' theorem. In order to prove existence, we  only have to show that there exists $f\in\Hol(\D)$ such that $\lambda f-f\circ \varphi=g$. Then, since $Q_n g =0$, $f_1=f-Q_n f\in\ker Q_n=\Hol_n(\alpha)$ satisfies $\lambda f_1-f_1\circ \varphi=g$ as well. 

In the case $\alpha=0$ the existence of $f$ follows from the proof of Theorem~\ref{th:34}. So let $\alpha\neq 0$. Consider the M\"obius transform $\psi_\alpha$ defined in the proof of Theorem~\ref{th:34} and let $h=g\circ \psi_\alpha$. Then $h\in\Hol_n(0)$. Indeed, $h(0)=g(\alpha)=0$.  Moreover, for $l\in\{1,\cdots,n\}$, using Fa\`a di Bruno's formula (see the proof of Theorem~\ref{th:31} for notations), we get:
\begin{eqnarray*}
 h^{(l)}(0) & = & (g\circ \psi)^{(l)}(\alpha)\\
   & = & \psi'_\alpha(\alpha)^l g^{(l)}(\alpha) + \sum_{\m\in K_n}C_{\m}^m g_2^{(|\m|)}(\alpha)\\
    & = & 0.    
\end{eqnarray*}
Consider $\tilde{\varphi}=\psi_\alpha\circ \varphi\circ \psi_\alpha$. Since $\tilde{\varphi}(0)=0$ we can apply the first case and find $\tilde{f}\in\Hol(\D)$ such that $\lambda \tilde{f}-\tilde{f}\circ \tilde{\varphi}=h$. 
Then $f:=\tilde{f}\circ \psi_\alpha\in\Hol(\D)$ and 
\[g=h\circ \psi_\alpha=\lambda f-\tilde{f}\circ \tilde{\varphi}\circ \psi_\alpha=\lambda f-f\circ\psi_\alpha\circ \tilde{\varphi}\circ \psi_\alpha=\lambda f-f\circ \varphi. \]	      
\end{proof}	
Our next aim is to show that each $P_n$ is the spectral projection associated with $\lambda_n$ for each $n\in\N$. We use the following definition.   
\begin{defn}\label{def:riesz}
	Let $Y$ be a Fr\'echet space and $S:Y\to Y$ linear and continuous. 
	\begin{enumerate}
		\item[(1)] A  number $\lambda\in\sigma(T)$ is called a \emph{Riesz point} if $\lambda$ is isolated and if   there exists a decomposition $Y=Y_1\oplus Y_2$ in closed subspaces which are invariant by $S$ such that:
		 \[
		\dim Y_1<\infty, \sigma(S_{|Y_1})=\{\lambda\} \mbox{ and }(\lambda\Id -S)_{|Y_2} \mbox{ is invertible.}
		\]
		It is not difficult to see that this decomposition is unique. The projection $P:Y\to Y$ onto $Y_1$ along this decomposition  is called the \emph{spectral projection} associated with the Riesz point $\lambda$.  
		\item[(2)] $\sigma_e(T):=\{ \lambda\in\sigma(T):\lambda\mbox{ is not a Riesz point}\}$ is the \emph{essential spectrum} and 
		$r_e(T)=\sup\{  |\lambda|:\lambda\in\sigma_e(T)\}$ is the \emph{essential spectral radius}.  
		\item[(3)] $T$ is a \emph{Riesz operator} if $r_e(T)=0$.	
	\end{enumerate}	
\end{defn}
\begin{rem}\label{rem:4.5}
\begin{enumerate}
	\item[(1)] If $X$ is a Banach space, the existence of the decomposition as in (1) of Definition~\ref{def:riesz} for $\lambda\in\sigma(T)$ implies that $\lambda$ is an isolated point since the set of all invertible operators is open in $\LX$. This  last property is no longer true if $X$ is a Fr\'echet space (see Example~\ref{ex:sp}).   
	\item[(2)] If $X$ is a Banach space, then an isolated point $\lambda\in\sigma(T)$ is a Riesz point if and only if $\lambda$ is a pole of the resolvent whose residuum $P$ has finite rank. In that case $P$ is the spectral projection. Note that 
	\[  P=\frac{1}{2i\pi} \int_{|\mu-\lambda|=\varepsilon}R(\mu, T)d\mu.   \]
	\item[(3)] See \cite{D}, in particular \cite[Theorem~3.19]{D} for other equivalent definitions of Riesz operators on Banach spaces.   
\end{enumerate}	
\end{rem}
Let $P_n$ be the rank-one projections defined in Theorem~\ref{th:31} where $n\in\N_0$. 
\begin{thm}\label{th:4.6}
The operator $C_\varphi$ on $\Hol(\D)$ is a Riesz operator. Moreover, for each $n\in\N_0$, the spectral projection associated with $\lambda_n$ is $P_n$. 	
\end{thm}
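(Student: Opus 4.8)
The plan is to prove two things: first that $C_\varphi$ is a Riesz operator (i.e.\ $r_e(C_\varphi)=0$), and second that for each $n\in\N_0$ the rank-one projection $P_n$ from Theorem~\ref{th:31} is exactly the spectral projection associated with the Riesz point $\lambda_n$. By Theorem~\ref{th:34} we already know $\sigma(C_\varphi)=\{0\}\cup\{\lambda_n:n\in\N_0\}$, and since $|\lambda_n|\to 0$, every nonzero point of the spectrum is isolated. So the whole task reduces to exhibiting, for each fixed $n\in\N$, the required invariant decomposition $\Hol(\D)=Y_1\oplus Y_2$ at $\lambda_n$ in the sense of Definition~\ref{def:riesz}(1), and checking that the associated projection is $P_n$. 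Once every $\lambda_n$ is shown to be a Riesz point, the only possible element of $\sigma_e(C_\varphi)$ is $0$, whence $r_e(C_\varphi)=0$ and $C_\varphi$ is Riesz.

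First I would fix $n\in\N$ and set $Y_1:=\rg P_n=\span\{\kappa^n\}$, which is one-dimensional. The natural complement is $Y_2:=\ker P_n$. Since $P_n$ is a continuous projection, $Y_2$ is closed and $\Hol(\D)=Y_1\oplus Y_2$. Both subspaces are invariant under $C_\varphi$: invariance of $Y_1$ is immediate from $C_\varphi\kappa^n=\lambda_n\kappa^n$, and invariance of $Y_2$ follows because $P_n$ commutes with $C_\varphi$ by Theorem~\ref{th:31}(a), so $f\in\ker P_n$ gives $P_n(C_\varphi f)=C_\varphi(P_n f)=0$. On $Y_1$ the operator acts as multiplication by $\lambda_n$, so $\sigma({C_\varphi}_{|Y_1})=\{\lambda_n\}$. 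It then remains to verify the third condition: $(\lambda_n\Id-C_\varphi)_{|Y_2}$ is invertible. Uniqueness of the decomposition and the identification of $P_n$ as the spectral projection are then automatic from Definition~\ref{def:riesz}.

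The main obstacle will be proving that $(\lambda_n\Id-C_\varphi)$ restricts to an invertible operator on $Y_2=\ker P_n$. The cleaner route is to relate $\ker P_n$ to the spaces $\Hol_k(\alpha)$ studied in Corollary~\ref{cor:33} and Lemma~\ref{lem:4.3}. The plan is to split further: write $\ker P_n=\bigl(\ker P_n\cap\rg Q_n\bigr)\oplus\Hol_n(\alpha)$, using the global decomposition $\Hol(\D)=\rg Q_n\oplus\Hol_n(\alpha)$ together with $P_kP_n=\delta_{k,n}P_n$ from Theorem~\ref{th:31}(d), so that $\ker P_n\cap\rg Q_n=\span\{\kappa^k:0\le k\le n,\ k\ne n\}$. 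On this finite-dimensional piece $C_\varphi$ is diagonal with eigenvalues $\lambda_k$ ($k\ne n$), all distinct from $\lambda_n$, so $(\lambda_n\Id-C_\varphi)$ is invertible there. On the complementary piece $\Hol_n(\alpha)$, invertibility of $(\lambda_n\Id-C_\varphi)$ is precisely the content of Lemma~\ref{lem:4.3}, since $|\lambda_n|>|\lambda_{n+1}|$: for every $g\in\Hol_n(\alpha)$ there is a unique $f\in\Hol_n(\alpha)$ with $\lambda_n f-f\circ\varphi=g$. Both summands being $C_\varphi$-invariant, we conclude $(\lambda_n\Id-C_\varphi)_{|Y_2}$ is bijective, and the closed graph theorem gives continuity of its inverse.

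Finally I would treat $n=0$ as a trivial special case ($Y_1=\span\{\kappa^0\}=\C e_0$, eigenvalue $\lambda_0=1$, and $\ker P_0=\Hol_0(\alpha)=\{f:f(\alpha)=0\}$, with invertibility of $(\Id-C_\varphi)_{|\ker P_0}$ again from Lemma~\ref{lem:4.3} with $n=0$). Assembling these, each $\lambda_n$ is a Riesz point with spectral projection $P_n$, and the essential spectrum is contained in $\{0\}$, giving $r_e(C_\varphi)=0$. I would also remark, in line with Remark~\ref{rem:4.5}(1), that $0$ itself need not be isolated from the Riesz points in the Fr\'echet topology the way it would be forced to in a Banach space, which is exactly why the Riesz structure here is compatible with $C_\varphi$ failing to be compact.
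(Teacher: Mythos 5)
Your proof is correct and follows essentially the same route as the paper's: the invariant decomposition $\Hol(\D)=\C\kappa^n\oplus\ker P_n$ coming from Theorem~\ref{th:31}(a), diagonality of $C_\varphi$ on the span of the $\kappa^k$, and Lemma~\ref{lem:4.3} (with $|\lambda_n|>|\lambda_{n+1}|$) to invert $\lambda_n\Id-C_\varphi$ on $\Hol_n(\alpha)$. The only cosmetic difference is that you split the subspace $\ker P_n$ itself as $\span\{\kappa^k: 0\le k\le n,\ k\ne n\}\oplus\Hol_n(\alpha)$ and obtain bijectivity on each summand simultaneously, whereas the paper proves injectivity separately via Koenigs' theorem and surjectivity by decomposing each $g\in\ker P_n$; these amount to the same argument.
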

\begin{proof}
Let $n\in\N_0$. We show that $\lambda_n$ is a Riesz point with spectral projection $P_n$. Since $P_nC_\varphi=C_\varphi P_n=\lambda_n P_n$ and $\rg P_n=\C \kappa^n$, it follows that the decomposition 
\[   \Hol(\D)=\C \kappa^n \oplus \ker P_n   \]
is invariant under $C_\varphi$. Moreover, $\sigma( {C_\varphi}_{|\C \kappa^n}   ) =\{\lambda_n\}$. Thus it suffices to  show that $(\lambda_n\Id -C_\varphi)_{|\ker P_n}$ is bijective. Since $\kappa^n \not\in\ker P_n$ injectivity follows from Koenigs' theorem. In order to prove surjectivity, let $g\in\ker P_n$. Then, by Corollary~\ref{cor:33}, $g=g_1 +g_2$ where $g_1\in\span \{ \kappa^m:m=0,\cdots, n  \}=:Z$, $g_2\in\Hol_n(\alpha)$. Since $P_n g_1=P_n g-P_ng_2=0$ and since ${C_\varphi}_{|Z}$ is diagonal, there exists $f_1\in Z$ such that $\lambda_n f_1-f_1\circ \varphi =g_1$. 
Note that $|\lambda_n|>|\lambda_{n+1}|$. Thus it follows from Lemma~\ref{lem:4.3} that there exists $f_2\in\Hol(\D)$ such that $\lambda_n f_2-f_2\circ \varphi=g_2$. Therefore $f:=f_1+f_2$ solves $\lambda_n f-f\circ \varphi  =g$. This shows that $\lambda_n$ is a Riesz point and $P_n$ is the associated spectral projection.      	
\end{proof}	 	
	
We want to prove that a version of the formula in (2), Remark~\ref{rem:4.5}, remains true for the operator $C_\varphi$ on $\Hol(\D)$. 

At first we deduce from  \cite[Lemma 3.2]{vasil}  
that the following holds. 

\begin{lem}\label{lem:39}
	Let $z\in\D,f\in\Hol(\D)$. The functions  
	\[  \lambda\mapsto ((\lambda\Id -C_\varphi)^{-1}f)(z):\C\setminus \sigma(C_\varphi)\to \C  \]
	is holomorphic. 
\end{lem}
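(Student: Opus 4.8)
The plan is to produce, near each point of the resolvent set, an explicit formula for the evaluated resolvent $\lambda\mapsto((\lambda\Id-C_\varphi)^{-1}f)(z)$ exhibiting it as a locally uniform limit (in $\lambda$) of functions that are manifestly holomorphic, and then to conclude by Weierstrass' theorem. First I would reduce to the case $\alpha=0$: with the M\"obius transform $\psi_\alpha$ from the proof of Theorem~\ref{th:34} and the similarity $C_\varphi=C_{\psi_\alpha}C_{\tilde\varphi}C_{\psi_\alpha}$ (recall $C_{\psi_\alpha}^{-1}=C_{\psi_\alpha}$), one gets
\[
((\lambda\Id-C_\varphi)^{-1}f)(z)=\big((\lambda\Id-C_{\tilde\varphi})^{-1}(f\circ\psi_\alpha)\big)(\psi_\alpha(z)),
\]
with $f\circ\psi_\alpha\in\Hol(\D)$ and $\psi_\alpha(z)\in\D$ fixed; so holomorphy in $\lambda$ for $C_\varphi$ follows from the corresponding statement for $C_{\tilde\varphi}$, which fixes $0$. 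Hence from now on $\varphi(0)=0$ and $\lambda_1=\varphi'(0)$.

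Fix $\lambda_0\in\C\setminus\sigma(C_\varphi)$. Since $\lambda_0\neq0$ and $|\lambda_n|\to0$, choose $n\in\N_0$ with $|\lambda_{n+1}|<|\lambda_0|$ and a small open disc $U\ni\lambda_0$ with $\delta:=\inf_{\lambda\in U}|\lambda|>|\lambda_{n+1}|$ and $U\cap\{\lambda_0,\dots,\lambda_n\}=\emptyset$; as $|\lambda_j|\le|\lambda_{n+1}|$ for $j>n$, this forces $U\cap\sigma(C_\varphi)=\emptyset$. By Corollary~\ref{cor:33} write $f=f_1+f_2$ with $f_1\in\rg Q_n=\span\{\kappa^0,\dots,\kappa^n\}$ and $f_2\in\Hol_n(0)$, a splitting independent of $\lambda$; since the decomposition is $C_\varphi$-invariant and $\lambda\Id-C_\varphi$ is invertible on $U$, the resolvent respects it. On $\rg Q_n$ the operator $C_\varphi$ is diagonal ($C_\varphi\kappa^j=\lambda_j\kappa^j$), so writing $f_1=\sum_{j=0}^n c_j\kappa^j$ yields
\[
((\lambda\Id-C_\varphi)^{-1}f_1)(z)=\sum_{j=0}^n\frac{c_j\,\kappa^j(z)}{\lambda-\lambda_j},
\]
a rational function of $\lambda$ holomorphic on $U$.

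The heart of the matter is the component $f_2\in\Hol_n(0)$, for which I would use the series from the proof of Theorem~\ref{th:34}: set $F_\lambda(z):=\sum_{k=0}^\infty\lambda^{-(k+1)}f_2(\varphi_k(z))$, where $\varphi_k$ is the $k$-th iterate. Choosing $|\lambda_1|<q<1$ with $q^{n+1}<\delta$, and $\varepsilon>0$ with $|\varphi(w)|\le q|w|$ on $|w|<\varepsilon$, and using that $\varphi$ is not an automorphism with interior fixed point $0$ (so $\varphi_k\to0$ locally uniformly by the Schwarz lemma / Denjoy--Wolff theorem) together with $|f_2(w)|\le B|w|^{n+1}$ near $0$, the terms are dominated, for $z$ in a fixed compact set and $\lambda\in U$, by a geometric series of ratio $q^{n+1}/\delta<1$. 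Thus the series converges locally uniformly on $\D\times U$: for each $\lambda\in U$ it defines $F_\lambda\in\Hol(\D)$, and a telescoping term-by-term computation gives $\lambda F_\lambda-F_\lambda\circ\varphi=f_2$, so by the uniqueness in Koenigs' Theorem~\ref{th:koenigs} (recall $\lambda\notin\sigma_p(C_\varphi)$) we obtain $F_\lambda=(\lambda\Id-C_\varphi)^{-1}f_2$ (Lemma~\ref{lem:35} gives the same identification via analytic continuation). For fixed $z$ each partial sum is holomorphic in $\lambda$ on $U$ and the convergence is uniform in $\lambda\in U$, so $\lambda\mapsto F_\lambda(z)$ is holomorphic by Weierstrass' theorem. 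Adding the two contributions gives holomorphy of $\lambda\mapsto((\lambda\Id-C_\varphi)^{-1}f)(z)$ on $U$, and since $\lambda_0$ was arbitrary, on all of $\C\setminus\sigma(C_\varphi)$.

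The main obstacle I expect is obtaining a \emph{globally} valid series representation of the resolvent (on all of $\D$, not merely on $\varepsilon\D$), and this is exactly why the splitting along $\rg Q_n\oplus\Hol_n(0)$ is indispensable: the naive series $\sum_k\lambda^{-(k+1)}f(\varphi_k(z))$ diverges for $|\lambda_{n+1}|<|\lambda|<1$ unless $f$ vanishes to order $n+1$ at the fixed point. Once the decomposition isolates the high-vanishing part $f_2$, the contraction $\varphi_k\to0$ supplies the uniform geometric decay needed both for convergence in $\Hol(\D)$ (to identify $F_\lambda$ with the resolvent) and for the uniform-in-$\lambda$ control that powers the Weierstrass argument.
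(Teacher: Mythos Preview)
Your argument is correct. The paper does not give a self-contained proof of this lemma: it simply invokes \cite[Lemma~3.2]{vasil}, a general statement about holomorphy of the resolvent for operators on Fr\'echet spaces, and then remarks in one line that ``this can also be seen directly from our proof of Theorem~\ref{th:34}.'' What you have written is precisely that second route carried out in full. You use the decomposition $\rg Q_n\oplus\Hol_n(0)$ and the Neumann-type series $\sum_k\lambda^{-(k+1)}f_2(\varphi_k(z))$ to produce, on a neighbourhood $U$ of any point of the resolvent set, an explicit representation of the evaluated resolvent that is a locally uniform limit of holomorphic functions of $\lambda$. Your observation that $\varphi_k\to0$ locally uniformly on $\D$ (iterated Schwarz) lets the series converge on all of $\D$, not only on the small disc $\varepsilon\D$ as in the proof of Theorem~\ref{th:34}; this makes the identification $F_\lambda=(\lambda\Id-C_\varphi)^{-1}f_2$ immediate without needing Lemma~\ref{lem:35}, though citing it as you do is harmless. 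The trade-off is clear: the paper's citation is quicker and applies in any Fr\'echet setting, while your argument is elementary and self-contained but tailored to this specific operator.

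One small presentational issue: you use the symbol $\lambda_0$ both for your fixed resolvent point and for the eigenvalue $\varphi'(\alpha)^0=1$ (as in ``$U\cap\{\lambda_0,\dots,\lambda_n\}=\emptyset$''). Rename the resolvent point, say $\mu_0$, to avoid the clash.
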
 

This can also be  seen directly from our proof of Theorem~\ref{th:34}. 

The following contour formula for $P_n$ will be useful in Section~\ref{sec:5}.  	
\begin{lem}\label{lem:310}
	Let $n\in\N$, $\varepsilon>0$ such that $\lambda_k\not\in D(\lambda_n,2\varepsilon)$ for all $k\in\N\setminus\{n\}$. Then, for all $f\in \Hol(\D)$
	\[  \frac{1}{2i\pi}\int_{|\lambda-\lambda_n|=\varepsilon} ((\lambda\Id -C_\varphi)^{-1} f)(z)d\lambda =(P_nf)(z).   \] 
\end{lem}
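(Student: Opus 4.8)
The plan is to exploit the $C_\varphi$-invariant decomposition $\Hol(\D)=\rg P_n\oplus\ker P_n$ furnished by Theorem~\ref{th:4.6}, writing $Y_1=\rg P_n=\C\kappa^n$ and $Y_2=\ker P_n$. First I would check that the contour $\{|\lambda-\lambda_n|=\varepsilon\}$ lies entirely in the resolvent set. By hypothesis no $\lambda_k$ with $k\neq n$ lies in $D(\lambda_n,2\varepsilon)$; moreover, since $\lambda_k\to 0$, the same hypothesis forces $0\notin\overline{D(\lambda_n,\varepsilon)}$ (otherwise $|\lambda_n|\leq\varepsilon$, and then $|\lambda_k-\lambda_n|\to|\lambda_n|<2\varepsilon$ would push the tail of $(\lambda_k)$ into $D(\lambda_n,2\varepsilon)$). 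Hence, by Theorem~\ref{th:34}, the only point of $\sigma(C_\varphi)$ inside the circle is $\lambda_n$, and the integrand is well defined on the contour by Lemma~\ref{lem:39}.

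Because $C_\varphi P_n=P_nC_\varphi$ (Theorem~\ref{th:31}(a)), the resolvent $(\lambda\Id-C_\varphi)^{-1}$ commutes with $P_n$ for every $\lambda\notin\sigma(C_\varphi)$, so it leaves both $Y_1$ and $Y_2$ invariant. I would then split $f=f_1+f_2$ with $f_1=P_nf\in Y_1$ and $f_2=(\Id-P_n)f\in Y_2$, and treat the two contributions separately. On $Y_1$ the operator $C_\varphi$ acts as multiplication by $\lambda_n$, so $(\lambda\Id-C_\varphi)^{-1}f_1=\frac{1}{\lambda-\lambda_n}f_1$, and the Cauchy integral formula gives $\frac{1}{2i\pi}\int_{|\lambda-\lambda_n|=\varepsilon}\frac{1}{\lambda-\lambda_n}f_1(z)\,d\lambda=f_1(z)=(P_nf)(z)$.

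The crux is showing that the $Y_2$-contribution vanishes. Since the decomposition is $C_\varphi$-invariant with both summands closed, one has $\sigma(C_\varphi|_{Y_2})\subseteq\sigma(C_\varphi)$, and by Theorem~\ref{th:4.6} the restriction $(\lambda_n\Id-C_\varphi)|_{Y_2}$ is invertible, i.e.\ $\lambda_n\notin\sigma(C_\varphi|_{Y_2})$. Combined with the first paragraph this yields $\sigma(C_\varphi|_{Y_2})\cap\overline{D(\lambda_n,\varepsilon)}=\emptyset$. Applying Lemma~\ref{lem:39} to the operator $C_\varphi|_{Y_2}$ on the Fr\'echet space $Y_2$, the map $\lambda\mapsto((\lambda\Id-C_\varphi|_{Y_2})^{-1}f_2)(z)=((\lambda\Id-C_\varphi)^{-1}f_2)(z)$ is holomorphic on a neighborhood of the closed disc $\overline{D(\lambda_n,\varepsilon)}$, so Cauchy's theorem gives $\frac{1}{2i\pi}\int_{|\lambda-\lambda_n|=\varepsilon}((\lambda\Id-C_\varphi)^{-1}f_2)(z)\,d\lambda=0$. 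Adding the two contributions produces the asserted formula.

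The main obstacle is precisely this holomorphic-extension step. In a Fr\'echet space, invertibility of $(\lambda_n\Id-C_\varphi)|_{Y_2}$ does not automatically propagate to a neighborhood of $\lambda_n$ (Remark~\ref{rem:4.5}(1)), so one cannot argue locally by a Neumann series; instead one must invoke the global spectral information that $\sigma(C_\varphi|_{Y_2})$ misses the entire closed disc $\overline{D(\lambda_n,\varepsilon)}$, which is what guarantees that the restricted resolvent is holomorphic there and lets Cauchy's theorem apply.
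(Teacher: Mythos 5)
Your proposal is correct and takes essentially the same route as the paper's proof: write $f=P_nf+(\Id-P_n)f$, integrate $\frac{1}{\lambda-\lambda_n}(P_nf)(z)$ by Cauchy's integral formula, and annihilate the $(\Id-P_n)f$ contribution by Cauchy's theorem, using holomorphy of $\lambda\mapsto((\lambda\Id-C_\varphi)^{-1}(\Id-P_n)f)(z)$ on $D(\lambda_n,2\varepsilon)$. The paper simply asserts this holomorphy (and leaves implicit that $0$ and $\lambda_0=1$ lie outside the disc), whereas you justify it through Theorem~\ref{th:4.6} and Lemma~\ref{lem:39} applied to the restriction to $\ker P_n$; your write-up is a more detailed rendering of the same argument.
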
 
\begin{proof}
	Write $f=(\Id-P_n)f+P_nf$. The function 
	\[\lambda\mapsto ((\lambda\Id-C_\varphi)^{-1}(\Id-P_n)f)(z)\]
	is holomorphic on $D(\lambda_n,2\varepsilon)$ whereas  $(\lambda\Id -C_\varphi)^{-1}P_nf=\frac{1}{\lambda-\lambda_n}P_nf$. From this the claim follows. 	
\end{proof}		
The spectrum in a Fr\'echet space may be neither  closed nor bounded. Indeed, 
here is an example of a composition operator on $\Hol(\D)$.   
\begin{exam}\label{ex:sp}
	Let $r\in(0,1)$ and the automorphism  
	\[\psi(z)=\frac{z+r}{1+rz}.\] 
	By Corollary~\ref{cor:aut}, $0\not\in \sigma(C_\psi)$ but $\sigma_p(C_\psi)=\C\setminus\{0\}$ since for all $\lambda\in\C\setminus\{0\}$ we have 
	\[ g_\lambda \circ \psi = \left( \frac{1+r}{1-r} \right)^\lambda g_\lambda \mbox{ where }g_\lambda(z)=\left( \frac{1+z}{1-z}\right)^\lambda.   \] 
	So, for $\mu=se^{i\theta}$ with $s>0$ and $\theta\in\R$, $g_\lambda\circ \psi=\mu g_\lambda$ when
	\[\Re(\lambda)=\frac{\ln s}{\ln((1+r)/(1-r))} \mbox{ and }  
	\Im(\lambda)=\frac{\theta +2k\pi}{\ln((1+r)/(1-r))}, k\in\Z.\]  
\end{exam}
For this reason one defines a larger spectrum, the Waelbroeck spectrum $\sigma_w(T)$ in the following way (see \cite{vasil}). 

Let $T\in{\mathcal L}(\Hol(\D))$. Then 
\begin{eqnarray*}
	\C\setminus \sigma_w(T) & :=&  \{\lambda \in\C\setminus \sigma(T) : \mbox{ there exists a neighborhood $\mathcal V$ of $\lambda$ such }\\
	&  & \mbox{	that the family $((\lambda\Id-T)^{-1})_{\lambda\in{\mathcal V}}$ is bounded}  \}.    
\end{eqnarray*}
Here a subset $A$ of $\Hol(\D)$ is called bounded if 
\[\sup_{f\in A}\sup_{z\in K} |f(z)|<\infty\]
for all compact subsets $K$ of $\D$. 

From the proof of Theorem~\ref{th:34} one sees that, in our case, $\sigma(C_\varphi)=\sigma_w (C_\varphi)$. Now we can use Fr\'echet theory (see Theorem~3.11 in \cite{vasil}). It tells us in particular that for the isolated point $\lambda_n\in\sigma_w(C_\varphi)$, there exists a unique projection $R_n$ commuting with $C_\varphi$ such that
\[\sigma \left({C_\varphi}_{|\rg R_n} \right)=\{\lambda_n\}\mbox{ and }\sigma\left( {C_\varphi}_{|\ker R_n} \right) =\sigma(C_\varphi)\setminus \{\lambda_n\}.\]
It is clear that $R_n=P_n$. 
Anyhow, we needed to define them differently since a priori it is not clear at all that $\lambda_n$ is an isolated point. 
		
Finally, we determine the spectrum of composition operators in the case where the symbol is not Schr\"oder but has an interior fixed point.  	
\begin{thm}\label{th:311}
	Let $\varphi:\D\to\D$ be holomorphic, $\alpha\in\D$ such that $\varphi(\alpha)=\alpha$. Assume that $\varphi'(\alpha)=0$. Then 
	\[  \sigma(C_\varphi)=\sigma_w(C_\varphi)=\{0,1\}.  \] 
\end{thm}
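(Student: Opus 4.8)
The plan is to reduce to the fixed point $\alpha=0$ by the same M\"obius conjugation used in Theorem~\ref{th:34}, and then analyze the two points $0$ and $1$ separately, showing that everything else lies in the resolvent set. First I would observe that $1\in\sigma_p(C_\varphi)$ always: the constant functions satisfy $C_\varphi f=f$, so $1$ is an eigenvalue. Also $0\in\sigma(C_\varphi)$ by Corollary~\ref{cor:aut}, since $\varphi\not\in\Aut(\D)$ (indeed $\varphi'(\alpha)=0$ forces $\varphi$ to be non-injective near $\alpha$, hence not an automorphism). By Koenigs' theorem in the degenerate case $\lambda_1=0$, the only eigenvalue besides those forced is $1$, with constant eigenfunctions; so $\sigma_p(C_\varphi)=\{1\}$, and the point spectrum already matches $\{0,1\}\setminus\{0\}$.

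\medskip

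After conjugating by $\psi_\alpha$ as in the second case of Theorem~\ref{th:34}, I may assume $\alpha=0$, so $\varphi(0)=0$ and $\varphi'(0)=0$. The key quantitative input is that $\varphi$ contracts small discs superlinearly: since $\varphi(0)=\varphi'(0)=0$, we have $\varphi(z)=O(z^2)$ near $0$, so for any $q\in(0,1)$ there is $\varepsilon\in(0,1)$ with $|\varphi(z)|\le q|z|$ for $|z|<\varepsilon$ (in fact one gets $|\varphi(z)|\le C|z|^2$, which is even stronger). The plan for any $\lambda\neq 0$ with $\lambda\neq 1$ is to solve the inhomogeneous equation $\lambda f-f\circ\varphi=g$ for arbitrary $g\in\Hol(\D)$. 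Writing $g=g_1+g_2$ where $g_1=g(0)e_0$ is the constant part and $g_2\in\Hol_0(\alpha)$ vanishes at $0$: on the constant part, $\lambda f_1-f_1=g_1$ is solved by $f_1=\frac{g(0)}{\lambda-1}e_0$ since $\lambda\neq 1$; on $g_2$ I would run exactly the Neumann-type series
\[
f_0(z)=\sum_{k=0}^\infty \frac{g_2(\varphi_k(z))}{\lambda^{k+1}},
\]
whose convergence on $\varepsilon\D$ follows from $|g_2(\varphi_k(z))|\le B|\varphi_k(z)|\le Bq^k\varepsilon$ together with $\sum|\lambda|^{-(k+1)}q^k\varepsilon<\infty$ whenever $q<|\lambda|$ (and here $q$ can be taken as small as we like, so this holds for every $\lambda\neq 0$). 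Then Lemma~\ref{lem:35} (applicable since $\varphi\not\in\Aut(\D)$ and $\varphi(0)=0$) extends $f_0$ to all of $\D$. Adding $f_1+f_0$ gives surjectivity; injectivity for $\lambda\notin\{0,1\}$ is Koenigs' theorem. Hence $\lambda\notin\sigma(C_\varphi)$, proving $\sigma(C_\varphi)\subseteq\{0,1\}$, and combined with the first paragraph $\sigma(C_\varphi)=\{0,1\}$.

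\medskip

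For the Waelbroeck spectrum $\sigma_w(C_\varphi)$, the plan is to check that the resolvents constructed above are locally bounded in $\lambda$ on compact subsets of $\C\setminus\{0,1\}$; this is immediate from the series, since for $\lambda$ in a compact set avoiding $0$ the bound $\sum_k |\lambda|^{-(k+1)}q^k$ is uniform once $q$ is chosen below the minimum of $|\lambda|$, and the constant-part solution $\frac{g(0)}{\lambda-1}$ is uniformly bounded away from $\lambda=1$. Thus $\C\setminus\{0,1\}\subseteq\C\setminus\sigma_w(C_\varphi)$, giving $\sigma_w(C_\varphi)=\{0,1\}$ as well. The main obstacle is purely bookkeeping: verifying that the degenerate contraction estimate $|\varphi(z)|\le q|z|$ holds for \emph{every} $q\in(0,1)$ (so that the series converges for all $\lambda\neq 0$, unlike the Schr\"oder case where one only had $q>|\lambda_1|$), and confirming that the isolated value $\lambda=1$ is genuinely in the spectrum rather than removable — this follows from the constant eigenfunction but must be stated to rule out $\sigma(C_\varphi)=\{0\}$.
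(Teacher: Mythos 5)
Your proposal is correct and follows essentially the same route as the paper: reduce to $\alpha=0$ by conjugating with $\psi_\alpha$, split $g$ into its constant part (solved using $\lambda\neq 1$) plus a part vanishing at $0$ (solved by the Neumann series $\sum_{k\ge 0} g_2(\varphi_k(z))/\lambda^{k+1}$ on a small disc centered at $0$), extend via Lemma~\ref{lem:35}, and get injectivity from Koenigs' theorem. The only difference is technical rather than structural: you control the iterates by $|\varphi_k(z)|\le q^k|z|$ with $q\in(0,1)$ arbitrarily small (available because $\varphi'(0)=0$), whereas the paper factors $\varphi_k(z)=z^{2^k}\tau_k(z)$ with $\tau_k$ a self-map of $\D$ and obtains convergence on $\D\cap\{|z|<\sqrt{|\lambda|}\}$; both estimates yield convergence for every $\lambda\neq 0$, which is all that is needed.
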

\begin{proof}
	Since $\varphi'(\alpha)=0$, $\varphi\not\in\Aut(\D)$ and thus, $0\in\sigma(C_\varphi)$ by Corollary~\ref{cor:aut}. Since the constant functions are in the kernel of $C_\varphi-\Id$, $1\in\sigma_p(C_\varphi)\subset \sigma(C_\varphi)$.    
	By Theorem~\ref{th:koenigs}, for $\lambda\not\in\{0,1\}$, $C_\varphi -\lambda\Id$ is injective. It remains to check that it is also surjective. \\
	\emph{First case}: $\alpha=0$. Then  
	$\varphi_n(z)=z^{2^n}\tau_n(z)$ where $\tau_n$ is a holomorphic self-map of the unit disc ($\tau_n(\D)\subset\D$ follows from the Schwarz lemma). Let $g\in\Hol(\D)$, $g(z)=g(0) + g_1(z)$ where $g_1\in\Hol(\D)$ and $g_1(z)=zg_2(z)$ with $g_2\in\Hol(\D)$.
	Note that $(\lambda\Id -C_\varphi)(\frac{g(0)}{\lambda-1}{\bf 1})=g(0){\bf 1}$. Moreover,  the series 
	\[f(z):=\frac{1}{\lambda}\sum_{n\geq 0}\frac{g_1(\varphi_n(z))}{\lambda^n}=\frac{1}{\lambda}\sum_{n\geq 0}\frac{z^{2^n}\tau_n(z)g_2(\varphi_n(z))}{\lambda^n}\]
	uniformly converges on every compact $K\subset\D\cap \{|z|<\sqrt{|\lambda|}\}$ (since $|z^{2^n}|\leq |z^{2n}|$ for $z\in\D$).  Note also that $\lambda f-f\circ\varphi=g_1$ on such $K$. 
	The surjectivity of $\lambda\Id -C_\varphi$ follows from Lemma~\ref{lem:35}.\\
	\emph{Second case}: $\alpha\neq 0$. We proceed as in the proof of Theorem~\ref{th:34}.    
\end{proof}

\section{Spectral properties on arbitrary Banach spaces}\label{sec:5}
In this section we study spectral properties of composition operators on arbitrary Banach spaces which are continuously injected in $\Hol(\D)$. 

Throughout this section we assume that $\varphi:\D\to\D$ is holomorphic, $\varphi\not\in\Aut(\D)$ and that there exists $\alpha\in\D$ 
such that $\varphi(\alpha)=\alpha$ and $\varphi'(\alpha)\neq 0$; i.e. $\varphi$ is a Schr\"oder function.  By $\kappa$ we denote Koenigs' eigenfunction. 
Let $X$ be a Banach space such that $X\hookrightarrow \Hol(\D)$ (which means that $X$ is a subspace of $\Hol(\D)$ and the injection is continuous; see \cite{delhi1} for equivalent formulations).  Assume that $C_\varphi X\subset X$ and define $T:X\to X$ by  $T={C_\varphi}_{|X}$. Then $T\in \LX$ by the closed graph theorem.   

As before we will consider the spectral projections $P_n$ on $\Hol(\D)$ 
 and let  $\lambda_n=\varphi'(\alpha)^n$, $n\in\N_0$. By Theorem~\ref{th:31}, $P_nf=\langle \Psi_n, f\rangle \kappa^n$, where $\Psi_n$ is a functional given by 
\[  \langle \Psi_n ,f\rangle =\sum_{m=0}^n c_{nm}f^{(m)}(\alpha).\]
It follows from Theorem~\ref{th:31} (a) that, \[ \langle C_\varphi f,\Psi_n\rangle =\lambda_n \langle f,\Psi_n\rangle,   \] 
for all $f\in\Hol(\D)$. This implies that $T'{\Psi_n}_{|X}=\lambda_n{\Psi_n}_{|X}$.  Thus, if ${\Psi_n}_{|X}\neq 0$, then $\lambda_n\in \sigma_p(T')\subset \sigma(T)$. We note this as a first result. 

\begin{prop}\label{th:41}
	Let $n\in\N_0$. Assume that ${\Psi_n}_{|X}\neq 0$. Then 
 \[\lambda_n\in\sigma_p(T')\subset \sigma(T).\]
\end{prop}
The following corollary concerns all the  
 classical Banach spaces $X$ of holomorphic functions on the unit disc.    
\begin{cor}
Assume that $e_n\in X$ for all $n\in \N_0$. Then $\lambda_n\in \sigma(T')\subset \sigma(T)$ for all $n\in\N_0$. 	
\end{cor}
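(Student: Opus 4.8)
The plan is to reduce the statement to Proposition~\ref{th:41}: it suffices to verify that ${\Psi_n}_{|X}\neq 0$ for every $n\in\N_0$, after which Proposition~\ref{th:41} yields $\lambda_n\in\sigma_p(T')\subset\sigma(T)$. Since the point spectrum is contained in the spectrum, this also gives $\lambda_n\in\sigma(T')$, while the inclusion $\sigma(T')\subset\sigma(T)$ is the standard Banach space fact that $T$ is invertible if and only if its adjoint $T'$ is (so in fact $\sigma(T')=\sigma(T)$). Thus the only genuine task is the non-vanishing of each $\Psi_n$ on $X$.

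To produce a test function in $X$ on which $\Psi_n$ does not vanish, I would use the polynomial $(e_1-\alpha e_0)^n$, i.e. the function $z\mapsto (z-\alpha)^n$. Writing it as $\sum_{m=0}^n\binom{n}{m}(-\alpha)^{n-m}e_m$, it is a finite linear combination of the $e_m$, hence lies in $X$ by hypothesis. The key observation is that $\langle\Psi_n,f\rangle$ depends only on the derivatives $f(\alpha),f'(\alpha),\dots,f^{(n)}(\alpha)$, and that the functions $(z-\alpha)^n$ and $\kappa^n$ have identical Taylor jets up to order $n$ at $\alpha$: both have vanishing derivatives of order $<n$ and $n$-th derivative equal to $n!$, the latter by Lemma~\ref{lem:deriv_sigma}. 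Consequently $\langle\Psi_n,(e_1-\alpha e_0)^n\rangle=\langle\Psi_n,\kappa^n\rangle$.

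Finally I would compute $\langle\Psi_n,\kappa^n\rangle$. Since $P_n\kappa^n=\kappa^n$ (which follows from $\rg P_n=\C\kappa^n$ together with $P_n^2=P_n$ in Theorem~\ref{th:31}(d)) and $P_nf=\langle\Psi_n,f\rangle\kappa^n$, we get $\langle\Psi_n,\kappa^n\rangle\kappa^n=\kappa^n$, whence $\langle\Psi_n,\kappa^n\rangle=1$ because $\kappa^n$ is not the zero function. Therefore $\langle\Psi_n,(e_1-\alpha e_0)^n\rangle=1\neq 0$, so ${\Psi_n}_{|X}\neq 0$, and the corollary follows from Proposition~\ref{th:41}. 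I expect no serious obstacle here; the only point requiring care is guaranteeing that a function on which $\Psi_n$ is nonzero actually belongs to $X$ (note that $\kappa^n$ itself need not), which is precisely where the hypothesis $e_m\in X$ for all $m\in\N_0$ enters.
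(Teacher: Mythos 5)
Your proof is correct, and it shares the paper's overall strategy: reduce the corollary to Proposition~\ref{th:41} by checking that ${\Psi_n}_{|X}\neq 0$, using the hypothesis $e_m\in X$ only to guarantee that polynomials lie in $X$. Where you genuinely differ is in how the non-vanishing is verified. The paper argues softly: $\Psi_n\neq 0$ (since $P_n$ is a rank-one, hence nonzero, projection), $\Psi_n$ is a continuous functional on $\Hol(\D)$, and the polynomials are dense in $\Hol(\D)$, so $\Psi_n$ cannot annihilate all polynomials; as all polynomials belong to $X$, the restriction ${\Psi_n}_{|X}$ is nonzero. You instead produce an explicit witness: $(e_1-\alpha e_0)^n\in X$, whose $n$-jet at $\alpha$ agrees with that of $\kappa^n$ by Lemma~\ref{lem:deriv_sigma}, combined with the observation from Theorem~\ref{th:31}(c) that $\langle\Psi_n,f\rangle$ depends only on $f(\alpha),\dots,f^{(n)}(\alpha)$, and the normalization $\langle\Psi_n,\kappa^n\rangle=1$ coming from $P_n\kappa^n=\kappa^n$. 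Each version has its merits: the paper's is a one-line deduction but leans on density of polynomials and continuity of $\Psi_n$; yours needs neither, is purely computational, and yields the sharper statement that $\Psi_n$ takes the value $1$ on a specific polynomial of degree $n$ (so $\Psi_n$ is already nonzero on the polynomials of degree at most $n$). Both arguments are complete, and your handling of the final step ($\sigma_p(T')\subset\sigma(T')=\sigma(T)$ by standard Banach-space duality) is consistent with what Proposition~\ref{th:41} asserts.
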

\begin{proof}
We know that $\Psi_n\neq 0$ for all $n\in \N_0$. Since the polynomials are dense in $\Hol(\D)$, it follows that ${\Psi_n}_{|X}\neq 0$.  	
\end{proof}	
It follows from the decomposition result, Corollary~\ref{cor:33}, that the $\Psi_n$ separate points in $\Hol(\D)$, i.e. for $f\in\Hol(\D)$, $\langle \Psi_n,f\rangle =0$ for all $n\in\N_0$ implies $f=0$. 
\begin{cor}
	If $X\neq \{0\}$, then $r(T)>0$, where $r(T)$ is the spectral radius of $T$. 
\end{cor}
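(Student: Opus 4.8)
The plan is to reduce this corollary to Proposition~\ref{th:41} combined with the point-separation property of the functionals $\Psi_n$ recorded just above it. Since $T\in\LX$ is a bounded operator on a Banach space, its spectrum $\sigma(T)$ is a nonempty compact subset of $\C$ and $r(T)=\max\{|\lambda|:\lambda\in\sigma(T)\}$; hence it suffices to produce a single \emph{nonzero} element of $\sigma(T)$.

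First I would choose any $f\in X$ with $f\neq 0$, which exists because $X\neq\{0\}$. Viewing $f$ as an element of $\Hol(\D)$ (the embedding $X\hookrightarrow\Hol(\D)$ being injective, as $X$ is a subspace of $\Hol(\D)$), the separation property of the $\Psi_n$ on $\Hol(\D)$ forces $\langle\Psi_n,f\rangle\neq 0$ for at least one index $n\in\N_0$; otherwise $f$ would vanish, contradicting $f\neq 0$. I would fix such an $n$. Since $f\in X$ and $\langle\Psi_n,f\rangle\neq 0$, the restriction ${\Psi_n}_{|X}$ is not the zero functional.

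With ${\Psi_n}_{|X}\neq 0$ established, Proposition~\ref{th:41} gives $\lambda_n\in\sigma_p(T')\subset\sigma(T)$. Because $\varphi$ is a Schr\"oder map we have $\varphi'(\alpha)\neq 0$, so $\lambda_n=\varphi'(\alpha)^n\neq 0$, and therefore $r(T)\geq|\lambda_n|=|\varphi'(\alpha)|^n>0$, as claimed.

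There is no genuine obstacle here: the argument is a short assembly of results already in place. The only point demanding a moment's care is the passage from $f\neq 0$ to ${\Psi_n}_{|X}\neq 0$, which is exactly where the separation of the family $(\Psi_n)_{n\in\N_0}$ on all of $\Hol(\D)$, together with the injectivity of $X\hookrightarrow\Hol(\D)$, enters the proof.
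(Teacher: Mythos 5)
Your proof is correct and follows essentially the same route as the paper: both use the fact that the functionals $\Psi_n$ separate points of $\Hol(\D)$ to produce an index $n$ with ${\Psi_n}_{|X}\neq 0$, then invoke Proposition~\ref{th:41} to get $\lambda_n\in\sigma(T)$ with $\lambda_n\neq 0$. Your write-up merely makes explicit the choice of a nonzero $f\in X$ that the paper leaves implicit.
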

\begin{proof}
Since the $\Psi_n$, $n\in\N_0$, separate the functions of $\Hol(\D)$, there exists $n\in\N_0$ such that ${\Psi_n}_{|X}\neq 0$. Hence $\lambda_n\in \sigma(T)$ by Proposition~\ref{th:41}. 	
\end{proof}	
We need the following characterization of the finite dimension also for further arguments.
\begin{prop}\label{prop:64}
	The following assertions are equivalent:
\begin{enumerate}
	\item[(i)] $0\not\in\sigma(T)$;
	\item[(ii)] for only finitely many $n\in\N_0$ one has ${\Psi_n}_{|X}\neq 0$;
	\item[(iii)] $\exists J\subset \N_0$ finite such that $X=\span\{  \kappa^l: l\in J \}$;
	\item[(iv)] $\dim X<\infty$. 
\end{enumerate}	
\end{prop}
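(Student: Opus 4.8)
The plan is to prove the cycle of implications $(i)\Rightarrow(ii)\Rightarrow(iii)\Rightarrow(iv)\Rightarrow(i)$. Two of these are essentially formal. The implication $(iii)\Rightarrow(iv)$ is immediate, since a finite span is finite-dimensional. For $(iv)\Rightarrow(i)$ I would note that $C_\varphi$ is injective on $\Hol(\D)$ (indeed $0\notin\sigma_p(C_\varphi)=\{\lambda_n:n\in\N_0\}$, because $|\lambda_n|>0$ for every $n$), hence so is $T={C_\varphi}_{|X}$; an injective endomorphism of a finite-dimensional space is bijective, so $0\notin\sigma(T)$.

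For $(i)\Rightarrow(ii)$ I would exploit that $\sigma(T)$ is a \emph{closed} subset of $\C$ — this is where the Banach-space structure of $X$ is indispensable — together with Proposition~\ref{th:41}. The numbers $\lambda_n=\varphi'(\alpha)^n$ are pairwise distinct, since $0<|\varphi'(\alpha)|<1$ makes $|\lambda_n|$ strictly decreasing, and $|\lambda_n|\to 0$. If ${\Psi_n}_{|X}\neq 0$ held for infinitely many $n$, then by Proposition~\ref{th:41} infinitely many distinct $\lambda_n$ would lie in $\sigma(T)$; as these accumulate only at $0$ and $\sigma(T)$ is closed, this forces $0\in\sigma(T)$, contradicting $(i)$. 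Hence only finitely many ${\Psi_n}_{|X}$ are nonzero.

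The real work is $(ii)\Rightarrow(iii)$. Set $J=\{n\in\N_0:{\Psi_n}_{|X}\neq 0\}$, finite by $(ii)$. First I would show $X\subseteq\span\{\kappa^l:l\in J\}$, arguing entirely inside $\Hol(\D)$: for $f\in X$ one has $P_nf=\langle\Psi_n,f\rangle\kappa^n=0$ whenever $n\notin J$, so with $g:=f-\sum_{k\in J}P_kf$ the relations $P_kP_j=\delta_{kj}P_k$ of Theorem~\ref{th:31}(d) give $P_kg=0$ for every $k$, whence $\langle\Psi_k,g\rangle=0$ for all $k$; since the $\Psi_k$ separate the points of $\Hol(\D)$, we get $g=0$ and $f=\sum_{k\in J}\langle\Psi_k,f\rangle\kappa^k$. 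In particular $\dim X<\infty$, and writing each element of $X$ in the coordinates $(c_l)_{l\in J}$ (unique, by linear independence of the $\kappa^l$) identifies $X$ with a subspace $V\subseteq\C^J$. The hypothesis $C_\varphi X\subset X$ together with $C_\varphi\kappa^l=\lambda_l\kappa^l$ shows that $V$ is invariant under the operator $\mathrm{diag}(\lambda_l)_{l\in J}$; since the $\lambda_l$ are pairwise distinct, every invariant subspace of such an operator is a coordinate subspace $\span\{e_l:l\in J_0\}$ for some $J_0\subseteq J$. Finally $\langle\Psi_l,\kappa^j\rangle=\delta_{lj}$ (immediate from $P_l\kappa^j=\delta_{lj}\kappa^l$), so ${\Psi_l}_{|X}\neq 0$ precisely when $l\in J_0$; by definition of $J$ this yields $J_0=J$, hence $V=\C^J$ and $X=\span\{\kappa^l:l\in J\}$.

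The step I expect to be most delicate is the second half of $(ii)\Rightarrow(iii)$: upgrading the inclusion $X\subseteq\span\{\kappa^l:l\in J\}$ to the equality $X=\span\{\kappa^l:l\in J\}$. This is exactly where the $C_\varphi$-invariance of $X$ is essential — without it the inclusion can be strict — and the mechanism is that an invariant subspace of a diagonal operator with \emph{distinct} eigenvalues is automatically a coordinate subspace. The only other non-formal ingredient is the closedness of $\sigma(T)$ used in $(i)\Rightarrow(ii)$, which is precisely the feature separating the Banach-space situation from the Fr\'echet-space pathologies of Example~\ref{ex:sp}.
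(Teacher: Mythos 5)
Your proof is correct, and its overall architecture --- the cycle $(i)\Rightarrow(ii)\Rightarrow(iii)\Rightarrow(iv)\Rightarrow(i)$ --- is the same as the paper's. Your arguments for $(i)\Rightarrow(ii)$, $(iii)\Rightarrow(iv)$ and $(iv)\Rightarrow(i)$ coincide with the paper's up to wording: the paper phrases $(iv)\Rightarrow(i)$ as $\sigma(T)=\sigma_p(T)\subset\{\lambda_n:n\in\N_0\}\subset\C\setminus\{0\}$ via Koenigs' theorem, which is your injectivity argument in disguise, and its one-line proof of $(i)\Rightarrow(ii)$ is exactly your closedness-of-$\sigma(T)$ argument left implicit. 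The genuine difference is in $(ii)\Rightarrow(iii)$. The paper argues by dimension counting: the separation property makes $f\mapsto(\langle\Psi_n,f\rangle)_{n\in J}$ injective on $X$, so $\dim X\le d:=|J|$; it then invokes Proposition~\ref{th:41} a second time to place the $d$ distinct values $\lambda_n$, $n\in J$, in $\sigma_p(T)$ (finite dimensionality identifying $\sigma(T)$ with $\sigma_p(T)$), which forces $\dim X\ge d$; equality of dimensions plus Koenigs' theorem then gives $X=\span\{\kappa^l:l\in J\}$. You instead prove the inclusion $X\subseteq\span\{\kappa^l:l\in J\}$ directly, expanding every $f\in X$ as $\sum_{k\in J}\langle\Psi_k,f\rangle\kappa^k$ via the relations $P_kP_j=\delta_{kj}P_k$ of Theorem~\ref{th:31} and the separation property, and then upgrade to equality by pure linear algebra: the $C_\varphi$-invariance of $X$ makes the corresponding subspace of $\C^J$ invariant under $\mathrm{diag}(\lambda_l)_{l\in J}$, whose invariant subspaces are coordinate subspaces because the $\lambda_l$ are pairwise distinct, and the biorthogonality $\langle\Psi_l,\kappa^j\rangle=\delta_{lj}$ pins the coordinate set down to all of $J$. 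Your route is more elementary and self-contained at this step --- no second appeal to the spectral content of Proposition~\ref{th:41} or to $\sigma_p(T')\subset\sigma(T)$ --- and it produces the explicit expansion of elements of $X$ as a by-product; the paper's route is shorter given that Proposition~\ref{th:41} is already in hand, and it yields the extra fact $\sigma_p(T)=\{\lambda_n:n\in J\}$ along the way. Both proofs use the $C_\varphi$-invariance of $X$ in an essential way, as you rightly emphasize: the paper through the eigenvalues of $T$, you through the invariant subspaces of a diagonal matrix.
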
 
\begin{proof}
$(i)\Rightarrow (ii)$: Since $\lambda_n\to 0$ as $n\to\infty$, this follows from Proposition~\ref{th:41}.\\
 $(ii)\Rightarrow (iii)$: Let $J:=\{  n\in\N_0:{\Psi_n}_{|X}\neq 0\}$. It follows from Corollary~\ref{cor:33} that the $\Psi_n$, $n\in\N_0$, separate $\Hol(\D)$. Thus the mapping 
 \[  f\mapsto (\langle \Psi_n , f \rangle)_{n\in J}:X\to \C^d,   \]
 with $d=| J|$ is injective and linear. It follows that $\dim X\leq d$. 
 It follows from Proposition~\ref{th:41} that $\{\lambda_n:n\in J\}\subset \sigma_p(T)$. Since all $\lambda_n$ are different, it follows that $\dim X\geq d$. We have shown that $\dim X=d$ and $\sigma_p(T)=\{\lambda_n:n\in  J\}$. Now it follows from Koenigs' theorem that $X=\span\{\kappa^l : l\in J\}$. \\
 $(iii)\Rightarrow (iv)$ is trivial.\\
 $(iv)\Rightarrow (i)$: Since $\dim X<\infty$, by Koenigs' theorem,
 \[\sigma(T)=\sigma_p(T)\subset \{\lambda_n:n\in\N_0\}\subset \C\setminus \{0\}.\]    

\end{proof}

We note the following corollary which will be useful later.
\begin{cor}\label{cor:55}
Assume that $\dim X=\infty$. Then there exist infinitely many $n\in\N_0$ such that $\lambda_n\in\sigma(T)$.  
\end{cor}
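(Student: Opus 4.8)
The plan is to combine the two immediately preceding results, Proposition~\ref{prop:64} and Proposition~\ref{th:41}, and observe that the conclusion falls out by pure logic. Proposition~\ref{prop:64} asserts the equivalence of the four conditions (i)--(iv); in particular, (ii) is equivalent to (iv). Taking contrapositives, the negation of (iv), namely $\dim X = \infty$, is equivalent to the negation of (ii), namely that the set
\[ S := \{ n \in \N_0 : {\Psi_n}_{|X} \neq 0 \} \]
is infinite. This is the only content I need to extract from Proposition~\ref{prop:64}.

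The execution is then a single step. Assuming $\dim X = \infty$, the set $S$ is infinite by the equivalence above. For each $n \in S$, Proposition~\ref{th:41} gives at once that $\lambda_n \in \sigma_p(T') \subset \sigma(T)$. Hence $\lambda_n \in \sigma(T)$ for every $n$ in the infinite set $S$, which is precisely the assertion of the corollary. (Since $0 < |\varphi'(\alpha)| < 1$, the values $\lambda_n = \varphi'(\alpha)^n$ are moreover pairwise distinct, so this also yields infinitely many distinct spectral values, though the statement as phrased only concerns the indices $n$.)

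There is no real obstacle here: the corollary is a direct logical consequence of the equivalence (ii)$\Leftrightarrow$(iv) together with the spectral inclusion of Proposition~\ref{th:41}. The only point requiring any care is the correct handling of the contrapositive of Proposition~\ref{prop:64} to convert the hypothesis $\dim X = \infty$ into the infinitude of $S$, after which the conclusion is immediate.

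\begin{proof}
Assume $\dim X = \infty$. By Proposition~\ref{prop:64}, condition (iv) fails, and hence condition (ii) fails as well. The failure of (ii) means precisely that the set
\[ S := \{ n \in \N_0 : {\Psi_n}_{|X} \neq 0 \} \]
is infinite. For each $n \in S$ we have ${\Psi_n}_{|X} \neq 0$, so Proposition~\ref{th:41} yields $\lambda_n \in \sigma_p(T') \subset \sigma(T)$. Therefore $\lambda_n \in \sigma(T)$ for all $n$ in the infinite set $S$, which proves the claim.
\end{proof}
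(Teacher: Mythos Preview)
Your proof is correct and follows exactly the approach indicated in the paper, which simply states that the corollary follows from Proposition~\ref{th:41} and Proposition~\ref{prop:64}. You have spelled out the logic that the paper leaves implicit: the contrapositive of (ii)$\Leftrightarrow$(iv) in Proposition~\ref{prop:64} gives infinitely many $n$ with ${\Psi_n}_{|X}\neq 0$, and Proposition~\ref{th:41} then places each corresponding $\lambda_n$ in $\sigma(T)$.
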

This follows from Proposition~\ref{th:41} and Proposition~\ref{prop:64}. \\

Next we show that each isolated point in the spectrum of $T$ is necessarily a simple pole, and thus equal to some $\lambda_n$. 

Recall that if  $\mu$ is an isolated  point of the spectrum, for the resolvent $R(\lambda, T)$ we have the Laurent development 
\[ R(\lambda, T) =\sum_{k=-\infty}^{\infty} A_k (\lambda-\mu)^k, \]
which is valid for $0<|\lambda-\mu|<\delta$ and   $\delta= \dist(\mu,\sigma(T)\setminus\{\mu\})$. Here  $A_k\in\LX$ are the coefficients and $A_{-1}=P$ is the spectral projection associated with $\mu$. Thus the spectral projection is equal to the residuum. 

One says that $\mu\in\sigma(T)$ is a \emph{simple pole} if it is isolated in $\sigma(T)$ and if $\dim(\rg P)=1$. This implies that $A_k=0$ for $k\leq -2$. Moreover $\rg P=\ker(\mu\Id-T)$ and $PT=TP=\mu P$. 

\begin{thm}\label{th:42}
	Let $\mu\in\C\setminus \{0\}$ be an isolated point of $\sigma(T)$. Then there exists $n\in\N_0$ such that $\mu=\lambda_n$ and  $\mu$ is a simple pole. Moreover $P_nX\subset X$  and ${P_n}_{|X}$ is  the spectral projection associated with $\mu$. Here $P_n$ is the projection from Theorem~\ref{th:31}.  
\end{thm}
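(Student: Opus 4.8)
The plan is to compare the spectral projection of $T$ at $\mu$ with the projections $P_n$ on $\Hol(\D)$ through contour integrals, exploiting that the resolvent of $T$ on $X$ agrees with that of $C_\varphi$ on $\Hol(\D)$ wherever both exist. Since $\mu\neq 0$ is isolated in $\sigma(T)$, for small $\varepsilon>0$ the spectral projection
\[P=\frac{1}{2i\pi}\int_{|\lambda-\mu|=\varepsilon}R(\lambda,T)\,d\lambda\in\LX\]
is well defined, and because $\mu\in\sigma(T)$ it is nonzero (otherwise $\ker P=X$ would force $\mu\notin\sigma(T)$ by the Riesz decomposition). The key preliminary observation is that for $x\in X$ and $\lambda\notin\sigma(T)\cup\sigma(C_\varphi)$ one has $R(\lambda,T)x=(\lambda\Id-C_\varphi)^{-1}x$: indeed $u:=R(\lambda,T)x\in X$ solves $\lambda u-C_\varphi u=x$ in $\Hol(\D)$, so injectivity of $\lambda\Id-C_\varphi$ forces $u=(\lambda\Id-C_\varphi)^{-1}x$.

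Next I would pin down $\mu$. By Theorem~\ref{th:34}, $\sigma(C_\varphi)=\{0\}\cup\{\lambda_k:k\in\N_0\}$, whose only accumulation point is $0$; since $\mu\neq 0$ I can shrink $\varepsilon$ so that $\overline{D(\mu,2\varepsilon)}$ meets $\sigma(C_\varphi)$ in at most the point $\mu$, while keeping the circle $|\lambda-\mu|=\varepsilon$ disjoint from $\sigma(T)$. Using the resolvent identity above, for every $x\in X$ and $z\in\D$,
\[(Px)(z)=\frac{1}{2i\pi}\int_{|\lambda-\mu|=\varepsilon}\left((\lambda\Id-C_\varphi)^{-1}x\right)(z)\,d\lambda.\]
If $\mu\notin\sigma(C_\varphi)$, then by Lemma~\ref{lem:39} the integrand is holomorphic on $D(\mu,\varepsilon)$, so the integral vanishes and $P=0$, a contradiction. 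Hence $\mu\in\sigma(C_\varphi)$, and being nonzero, $\mu=\lambda_n$ for some $n\in\N_0$.

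With $\mu=\lambda_n$ fixed, the chosen $\varepsilon$ satisfies the hypothesis of Lemma~\ref{lem:310} (the case $n=0$ is handled by the same argument, using that $\lambda_0$ is a Riesz point by Theorem~\ref{th:4.6}), so the right-hand side of the displayed identity equals $(P_nx)(z)$. Thus $Px=P_nx$ in $\Hol(\D)$ for every $x\in X$; since $Px\in X$ this yields $P_nX\subset X$ and ${P_n}_{|X}=P$. Finally $\rg P_n=\C\kappa^n$ in $\Hol(\D)$, so $\rg P=P_nX\subset\C\kappa^n$, and as $P\neq 0$ this forces $\rg P=\C\kappa^n$, which is one-dimensional. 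Therefore $\mu=\lambda_n$ is an isolated point of $\sigma(T)$ with $\dim\rg P=1$, i.e. a simple pole, and ${P_n}_{|X}$ is its spectral projection.

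The main obstacle is the careful choice of the integration radius so that the circle simultaneously avoids $\sigma(T)$ and the disc isolates $\mu$ inside $\sigma(C_\varphi)$, together with the verification that the $\Hol(\D)$-valued contour integral, evaluated pointwise, reproduces exactly $P_n$ via Lemma~\ref{lem:310}. The conceptual heart is the comparison of the two resolvents, which converts an a priori opaque spectral subspace of $T$ into the explicit rank-one space $\C\kappa^n$.
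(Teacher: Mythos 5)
Your proof is correct and follows essentially the same route as the paper: compare the spectral projection of $T$ at $\mu$ with a contour integral of the resolvent of $C_\varphi$ evaluated pointwise, use Lemma~\ref{lem:39} together with Cauchy's theorem to force $\mu\in\{\lambda_n:n\in\N_0\}$, and then invoke Lemma~\ref{lem:310} to identify that projection with $P_n$, giving rank one and hence a simple pole. Your extra details (the justification of the resolvent identity, the non-vanishing of $P$, and the $n=0$ case, which the paper's Lemma~\ref{lem:310} formally states only for $n\in\N$) merely fill in steps the paper leaves implicit.
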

\begin{proof}
Assume that $\mu\not\in \{ \lambda_n:n\in\N_0\}$, $\lambda_n=\varphi'(\alpha)^n$. Let $\varepsilon>0$ such that $D(\mu, 2\varepsilon)\cap \{\lambda_n:n\in\N_0\}=\emptyset$.  Denote by 
\[P=\frac{1}{2i\pi} \int_{|\lambda-\mu|=\varepsilon}(\lambda \Id-T)^{-1}d\lambda\]
the spectral projection associated with $\mu$. Let $f\in X, z\in\D$.
Since for $|\lambda-\mu|=\varepsilon$, $\lambda\in\rho(C_\varphi)\cap\rho(T)$, one has: 
\[ ((\lambda\Id -T)^{-1} f)(z)=((\lambda\Id-C_\varphi)^{-1}f)(z),    \]
 it follows from Lemma~\ref{lem:39} and Cauchy's theorem that $(Pf)(z)=0$. Since $f\in X, z\in\D$ are arbitrary, it follows that $P=0$, a contradiction. 

Thus $\mu=\lambda_{n_0}$ for some $n_0\in\N$. Let $\varepsilon>0$ such that $\lambda_n\not\in D(\lambda_{n_0},2\varepsilon)$ for all $n\neq n_0$. Let 
\[P=  \frac{1}{2i\pi} \int_{|\lambda-\lambda_{n_0}|=\varepsilon}(\lambda \Id-T)^{-1}d\lambda\]
be the spectral projection. It follows from Lemma~\ref{lem:310} that $P=P_{n_0}$. Thus $P$ has rank $1$ and this means by definition that $\mu=\lambda_{n_0}$ is a simple pole.  
\end{proof}	
\begin{rem}
	In \cite{delhi1} it was proved that each pole is necessarily simple. Now we know more: each \emph{isolated point in the spectrum is a simple pole}. 
\end{rem}
%
We note the following consequence of Theorem~\ref{th:42} 
\begin{cor}
	 If the spectrum of $T={C_\varphi}_{|X}$ is countable, then $\sigma(T)\subset\{\lambda_n:n\in \N_0\}\cup \{0\}$. 
\end{cor}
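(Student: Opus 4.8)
The plan is to combine the structural result of Theorem~\ref{th:42} with a Baire category argument. Theorem~\ref{th:42} tells us that every isolated point of $\sigma(T)$ lying in $\C\setminus\{0\}$ is one of the $\lambda_n$; so the entire task reduces to showing that, when $\sigma(T)$ is countable, the complement $\sigma(T)\setminus(\{\lambda_n:n\in\N_0\}\cup\{0\})$ contains such an isolated point unless it is empty. I would argue by contradiction.

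First I would record that $\sigma(T)$ is a compact subset of $\C$ (standard for a bounded operator on a Banach space). Set $B:=\{\lambda_n:n\in\N_0\}\cup\{0\}$, which is compact since $\lambda_n\to 0$, and put $F:=\sigma(T)\setminus B$. As $B$ is closed, $F$ is relatively open in the compact Hausdorff space $\sigma(T)$; an open subspace of a compact Hausdorff space is locally compact, hence a Baire space. Moreover $F$ is countable, being a subset of the countable set $\sigma(T)$. Assume, for contradiction, that $F\neq\emptyset$.

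Because $F$ is a nonempty countable Baire space, it must contain an isolated point: writing $F$ as the countable union of its singletons, the Baire property forbids all of them from being nowhere dense, so some singleton $\{\mu\}$ has nonempty interior in $F$. For such a $\mu$ there is an open set $W\subset\C$ with $W\cap F=\{\mu\}$. Since $\mu\in F$ we have $\mu\notin B$, and as $B$ is closed we may shrink $W$ so that $W\cap B=\emptyset$; then $W\cap\sigma(T)=W\cap F=\{\mu\}$, so $\mu$ is in fact an isolated point of $\sigma(T)$ with $\mu\neq 0$. Theorem~\ref{th:42} now applies to $\mu$ and yields $\mu=\lambda_n$ for some $n\in\N_0$, contradicting $\mu\in F=\sigma(T)\setminus B$. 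Hence $F=\emptyset$, i.e. $\sigma(T)\subset\{\lambda_n:n\in\N_0\}\cup\{0\}$.

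The only genuinely nontrivial ingredient, and the step I would state most carefully, is the Baire-category assertion that a nonempty countable locally compact Hausdorff space has an isolated point; this is precisely where the countability hypothesis on the spectrum enters in an essential way. Everything else is bookkeeping: the reduction $W\cap\sigma(T)=\{\mu\}$ uses only that $B$ is closed and disjoint from $\mu$, and the final contradiction is immediate from Theorem~\ref{th:42}. One should also note the degenerate cases are harmless: if $\dim X<\infty$ then $\sigma(T)=\sigma_p(T)\subset\{\lambda_n:n\in\N_0\}$ by Koenigs' theorem, and if $X=\{0\}$ then $\sigma(T)=\emptyset$, so in both cases the conclusion holds trivially.
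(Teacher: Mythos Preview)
Your proof is correct and follows the same strategy as the paper: use Baire's theorem on a countable (locally) compact piece of the spectrum to produce a nonzero point isolated in $\sigma(T)$, then invoke Theorem~\ref{th:42} for the contradiction. The paper works with $K=\sigma(T)\setminus\mathcal U$ for an arbitrary open neighbourhood $\mathcal U$ of $\{\lambda_n\}\cup\{0\}$ rather than directly with $F=\sigma(T)\setminus B$, but the idea is identical; your explicit step of shrinking $W$ so that $W\cap B=\emptyset$, thereby upgrading ``isolated in $F$'' to ``isolated in $\sigma(T)$'', is precisely the detail the paper leaves implicit.
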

 \begin{proof}
Let $\mathcal U$ be an open neighborhood of $\{\lambda_n:n\in N_0\}\cup \{0\}$ and $K:=\sigma(T)\setminus {\mathcal U}$. Then $K$ is compact and countable. If $K\neq\emptyset$, then, by Baire's theorem, $K$ has an isolated point. This is impossible by Theorem~\ref{th:42}. Thus $K=\emptyset$. Since $\mathcal U$ is arbitrary, the claim follows.

\end{proof}

Our next aim is to describe the connected component of $0$ in $\sigma(T)$. 
 Assume that $X\hookrightarrow\Hol(\D)$ is invariant under $C_\varphi$ and let $T={C_\varphi}_{|X}$, as before, where $\varphi:\D\to\D$ is the given Schr\"oder map. Let us assume that  $\dim X=\infty$. Then $0\in\sigma(T)$ and the set 
 \[ J:=\{ n\in\N_0:{\Psi_n}_{|X}\neq 0\}  \]
is infinite   by Proposition~\ref{prop:64}
 
 We let $\lambda_n=\varphi'(\alpha)^n$ where $\alpha$ is the interior fixed point of $\varphi$. 
 By Proposition~\ref{th:41}, $\lambda_n\in \sigma(T)$ for $n\in J$. 
 Moreover, let 
 \[  J_0:=\{  n\in\N_0: \lambda_n\mbox{ is an isolated point of }\sigma(T)\}.   \]  
 We know that for $n\in J_0$, $\kappa^n\in X$ and $T\kappa^n =\lambda_n \kappa^n$. 
 
 Our main result in this section is the following quite precise 
 description of the spectrum of $T$. 
 \begin{thm}\label{th:lenoir}
 	Assume that $\dim X=\infty$. 
 	Denote by $\sigma_0(T)$ the connected component of $0$ in $\sigma(T)$. Then 
 	\[  \sigma(T)=\sigma_0 (T) \cup\{ \lambda_n:n\in J_0 \}.  \] 
 	In particular, $\sigma_0(T)$ is the essential spectrum of $T$.  
 \end{thm}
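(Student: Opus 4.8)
The plan is to prove the decomposition and the essential-spectrum statement together, by showing that both $\sigma_0(T)$ and $\sigma_e(T)$ coincide with $\sigma(T)\setminus\{\lambda_n:n\in J_0\}$. Since $X$ is a Banach space, $\sigma(T)$ is compact. The inclusion $\sigma_0(T)\cup\{\lambda_n:n\in J_0\}\subseteq\sigma(T)$ is immediate: $\sigma_0(T)$ is a component of $\sigma(T)$, and $\lambda_n\in\sigma(T)$ whenever $n\in J_0$ by the definition of $J_0$. Moreover, for $n\in J_0$ the point $\lambda_n$ is isolated in $\sigma(T)$, so $\{\lambda_n\}$ is a clopen subset of $\sigma(T)$, hence its own connected component; as $\lambda_n=\varphi'(\alpha)^n\neq 0$ while $0\in\sigma_0(T)$, we get $\lambda_n\notin\sigma_0(T)$, so the union on the right-hand side is disjoint.

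The heart of the argument is the reverse inclusion: if $\mu\in\sigma(T)$ and $\mu\notin\sigma_0(T)$, I must show $\mu=\lambda_n$ for some $n\in J_0$. In particular $\mu\neq 0$, since $0\in\sigma_0(T)$. As $\sigma(T)$ is compact its components coincide with its quasi-components, so $\mu$ and $0$ lying in distinct components provides a clopen set $V\subseteq\sigma(T)$ with $\mu\in V$ and $0\notin V$. Then $V$ and $\sigma(T)\setminus V$ are disjoint compact sets and $\delta:=\min(\dist(0,V),\dist(V,\sigma(T)\setminus V))>0$. I would take $W$ to be the open $\tfrac{\delta}{2}$-neighborhood of $V$ (perturbing it slightly, if needed, so that its boundary $\Gamma$ meets none of the $\lambda_k$). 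Since the $\lambda_k$ accumulate only at $0$ and $0\notin\overline W$, the contour $\Gamma$ encloses only finitely many $\lambda_k$ and lies in $\rho(C_\varphi)\cap\rho(T)$. Hence the Riesz projection $P_V=\frac{1}{2i\pi}\int_\Gamma(\lambda\Id-T)^{-1}\,d\lambda$ is well defined, $X_V:=\rg P_V$ is closed and $T$-invariant, and $\sigma(T_{|X_V})=V$.

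The key step is to evaluate $P_V$ pointwise and read off its rank. Fix $f\in X$ and $z\in\D$; evaluation at $z$ is continuous on $X\hookrightarrow\Hol(\D)$, so $(P_Vf)(z)=\frac{1}{2i\pi}\int_\Gamma((\lambda\Id-T)^{-1}f)(z)\,d\lambda$. For $\lambda\in\Gamma$ the function $h:=(\lambda\Id-T)^{-1}f\in X\subseteq\Hol(\D)$ satisfies $(\lambda\Id-C_\varphi)h=f$, and since $\lambda\in\rho(C_\varphi)$ uniqueness gives $h=(\lambda\Id-C_\varphi)^{-1}f$. Thus the integrand equals $((\lambda\Id-C_\varphi)^{-1}f)(z)$, which by Lemma~\ref{lem:39} is holomorphic in $\lambda$ off $\sigma(C_\varphi)$ and, by the proof of Lemma~\ref{lem:310}, has a simple pole with residue $(P_kf)(z)$ at each $\lambda_k$. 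The residue theorem then yields $(P_Vf)(z)=\sum_{k\in S}(P_kf)(z)$, where $S$ is the finite set of indices $k$ with $\lambda_k$ inside $\Gamma$. Because each $P_k$ has rank one with range $\C\kappa^k$, this forces $\rg P_V\subseteq\span\{\kappa^k:k\in S\}$, so $\dim X_V<\infty$. Consequently $V=\sigma(T_{|X_V})$ is finite, each of its points is isolated in $\sigma(T)$, and in particular $\mu$ is an isolated nonzero point of $\sigma(T)$; Theorem~\ref{th:42} then gives $\mu=\lambda_n$ with $n\in J_0$. This establishes $\sigma(T)=\sigma_0(T)\cup\{\lambda_n:n\in J_0\}$.

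It remains to identify the essential spectrum. For $n\in J_0$, Theorem~\ref{th:42} shows $\lambda_n$ is a simple pole with finite-rank spectral projection, hence a Riesz point. On the other hand $0$ is not isolated in $\sigma(T)$: by Corollary~\ref{cor:55} infinitely many $\lambda_k$ belong to $\sigma(T)$, and they converge to $0$, so $0$ is not a Riesz point. Since in a Banach space every Riesz point is isolated, Theorem~\ref{th:42} shows the Riesz points of $T$ are exactly $\{\lambda_n:n\in J_0\}$, whence $\sigma_e(T)=\sigma(T)\setminus\{\lambda_n:n\in J_0\}$. Combining this with the disjoint decomposition above gives $\sigma_e(T)=\sigma_0(T)$. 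The main obstacle is the pointwise/residue identity $(P_Vf)(z)=\sum_{k\in S}(P_kf)(z)$: it is exactly this computation — transporting the resolvent of $T$ back to that of $C_\varphi$ on $\Hol(\D)$ and summing residues — that forces every spectral clopen piece disjoint from $0$ to have finite-dimensional range, and hence to reduce to finitely many Koenigs eigenvalues.
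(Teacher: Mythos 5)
Your proof is correct, and its analytic heart coincides with the paper's key lemma (Lemma~\ref{lem:lenoir}): for a clopen piece $V\subset\sigma(T)$ with $0\notin V$, evaluate the Riesz projection pointwise, replace $((\lambda\Id-T)^{-1}f)(z)$ by $((\lambda\Id-C_\varphi)^{-1}f)(z)$ on the contour, and collapse the integral via Lemmas~\ref{lem:39} and~\ref{lem:310} to $\sum_{k\in S}(P_kf)(z)$, which forces the projection to have finite rank inside $\span\{\kappa^k:k\in S\}$. What you do differently is the assembly around this core. Topologically, the paper sets $K=\sigma(T)\setminus\{\lambda_n:n\in J_0\}$ and proves that $K$ is connected, by showing that any clopen-in-$K$ subset avoiding $0$ is actually clopen in $\sigma(T)$; you instead invoke the fact that components and quasi-components agree in compact Hausdorff spaces, which hands you directly a clopen $V$ separating a given $\mu\notin\sigma_0(T)$ from $0$. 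Algebraically, the paper's lemma pins down the clopen piece exactly as $\{\lambda_k:k\in J_1\}$ by proving the functionals $\Psi_k$ restricted to $X$ are linearly independent and that the $\kappa^k$ form an eigenbasis of the range of the projection; you stop at $\dim\rg P_V<\infty$, deduce that $V$ is finite so that $\mu$ is an isolated nonzero point of $\sigma(T)$, and let Theorem~\ref{th:42} perform the identification $\mu=\lambda_n$ with $n\in J_0$. Your variant is somewhat leaner (no biorthogonal system needed) at the price of importing two external facts (the quasi-component theorem and Theorem~\ref{th:42}); the paper's variant is more self-contained on the topology side and its lemma gives the sharper structural statement, which is reused verbatim in the case distinction of its proof. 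The concluding identification $\sigma_e(T)=\sigma_0(T)$ is handled the same way in both arguments: Corollary~\ref{cor:55} shows $0$ is not isolated, hence not a Riesz point, while Theorem~\ref{th:42} makes each $\lambda_n$, $n\in J_0$, a Riesz point. The only caveat, shared with the paper itself, is that the separating contour is in general a cycle (a finite union of rectifiable curves) rather than a single Jordan curve; this is a standard functional-calculus technicality and affects neither argument.
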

Of course it may happen that $J_0=\emptyset$. This is the case if and only if $\sigma(T)$ is connected. 

For the proof, we need the following. 
\begin{lem}\label{lem:lenoir}
Let $\sigma_1$ be an open and closed subset of $\sigma(T)$. \\
  If $0\not\in\sigma_1$, then there exists a finite set  $J_1\subset J_0$ such that 
\[\sigma_1 = \{\lambda_n:n\in J_1\}.\]  
\end{lem}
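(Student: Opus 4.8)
The plan is to realize the Riesz spectral projection of $T$ attached to the clopen spectral set $\sigma_1$ and to identify it, via the resolvent of $C_\varphi$ on $\Hol(\D)$, with a \emph{finite} sum of the rank-one projections ${P_n}_{|X}$ from Theorem~\ref{th:31}. Since $\sigma(T)$ is compact and $\sigma_1$ is open and closed in it with $0\notin\sigma_1$, the sets $\sigma_1$ and $\sigma(T)\setminus\sigma_1$ are disjoint and compact, so $\dist(\sigma_1,\sigma(T)\setminus\sigma_1)>0$ and $\dist(0,\sigma_1)>0$. First I would fix a small open neighborhood $W$ of $\sigma_1$ whose closure avoids both $\sigma(T)\setminus\sigma_1$ and $0$, and a cycle $\Gamma\subset W$ with winding number $1$ about each point of $\sigma_1$ and $0$ about each point of $\sigma(T)\setminus\sigma_1$. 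Because $\Gamma$ is bounded away from $0$ and $\lambda_n\to 0$, only finitely many $\lambda_n$ meet $W$; after a harmless perturbation I may assume $\Gamma\cap\sigma(C_\varphi)=\emptyset$, so that $\Gamma\subset\rho(T)\cap\rho(C_\varphi)$.

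Next I would form the spectral projection
\[ P_{\sigma_1}=\frac{1}{2i\pi}\int_{\Gamma}(\lambda\Id-T)^{-1}\,d\lambda, \]
which gives the usual $T$-invariant decomposition $X=\rg P_{\sigma_1}\oplus\ker P_{\sigma_1}$ with $\sigma(T_{|\rg P_{\sigma_1}})=\sigma_1$. The key step is to evaluate $P_{\sigma_1}$ pointwise. For $f\in X$ and $z\in\D$, on $\Gamma$ one has $((\lambda\Id-T)^{-1}f)(z)=((\lambda\Id-C_\varphi)^{-1}f)(z)$, exactly as in the proof of Theorem~\ref{th:42}; since evaluation at $z$ is continuous on $\Hol(\D)$ (hence on $X\hookrightarrow\Hol(\D)$), it commutes with the contour integral, yielding
\[ (P_{\sigma_1}f)(z)=\frac{1}{2i\pi}\int_{\Gamma}((\lambda\Id-C_\varphi)^{-1}f)(z)\,d\lambda. \]
By Lemma~\ref{lem:39} this integrand is holomorphic on $\C\setminus\sigma(C_\varphi)$, so I can contract $\Gamma$ to small circles around the finitely many $\lambda_n$ it encloses, and Lemma~\ref{lem:310} identifies each residue as $(P_nf)(z)$. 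Therefore $P_{\sigma_1}=\sum_{n\in J_1}{P_n}_{|X}$, where $J_1$ is the finite set of indices $n$ with $\lambda_n$ inside $\Gamma$.

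From here the conclusion follows. Each $P_n$ has rank one, so $\rg P_{\sigma_1}$ is finite-dimensional and $\sigma_1=\sigma(T_{|\rg P_{\sigma_1}})$ is a finite set. Every point of $\sigma_1$ is then an eigenvalue of $T$, hence of $C_\varphi$, hence equals some $\lambda_m$ by Koenigs' theorem (Theorem~\ref{th:koenigs}); being a point of the finite clopen set $\sigma_1$, it is isolated in $\sigma(T)$, so $m\in J_0$. Redefining $J_1:=\{n\in\N_0:\lambda_n\in\sigma_1\}$ gives $\sigma_1=\{\lambda_n:n\in J_1\}$ with $J_1\subset J_0$ finite, as claimed. (Any index $n$ enclosed by $\Gamma$ with $\lambda_n\notin\sigma(T)$ drops out automatically, since then ${\Psi_n}_{|X}=0$ by Proposition~\ref{th:41} and hence ${P_n}_{|X}=0$.)

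The main obstacle is the construction in the first paragraph: I must place a single cycle $\Gamma$ simultaneously in $\rho(T)$ and in $\rho(C_\varphi)$ so that the spectral-projection integral for $T$ and the residue computation for the $C_\varphi$-resolvent can be run on the \emph{same} contour. The compactness of $\sigma_1$, the positive distance estimates, and the accumulation of $\{\lambda_n\}$ only at $0$ make this possible, and once $\Gamma$ is in place the identification $P_{\sigma_1}=\sum_{n\in J_1}{P_n}_{|X}$ is the crux; the remaining finiteness and eigenvalue bookkeeping is routine.
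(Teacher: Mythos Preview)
Your proposal is correct and follows essentially the same route as the paper: choose a contour $\Gamma\subset\rho(T)\cap\rho(C_\varphi)$ separating $\sigma_1$ from the rest of $\sigma(T)$ and from $0$, form the Riesz projection $P_{\sigma_1}$, identify the $T$-resolvent with the $C_\varphi$-resolvent pointwise on $\Gamma$, and then use Lemmas~\ref{lem:39} and~\ref{lem:310} to contract $\Gamma$ to the finitely many enclosed $\lambda_n$'s, obtaining $P_{\sigma_1}=\sum_{n\in J_1}{P_n}_{|X}$. The only cosmetic difference is in the endgame: the paper, having written $P_{\sigma_1}f=\sum_{k\in J_1}\langle\Psi_k,f\rangle\kappa^k$, explicitly separates the $\Psi_k$'s to exhibit $\{\kappa^k:k\in J_1\}$ as a basis of $\rg P_{\sigma_1}$ and reads off $\sigma_1=\sigma(T_{|\rg P_{\sigma_1}})=\{\lambda_k:k\in J_1\}$ directly, whereas you argue more abstractly (finite-dimensional restriction $\Rightarrow$ eigenvalues $\Rightarrow$ Koenigs) and then note that finiteness plus clopenness forces each $\lambda_m\in\sigma_1$ to be isolated, giving $J_1\subset J_0$.
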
  
\begin{proof}
  Assume that $0\not\in\sigma_1$. 	Let $\Gamma$ be a rectifiable Jordan curve such that $\sigma_1\subset \mathop{int}\Gamma$ and $\sigma(T)\setminus \sigma_1\subset \mathop{ext}\Gamma$. We can choose $\Gamma$ such that $\lambda_n\not\in \Gamma$ for all $n\in\N_0$, and $0\in\mathop{ext}\Gamma$. 
 Consequently $J_2:=\{n\in\N_0:\lambda_n\in\mathop{int}\Gamma\}$ is finite. Denote by 
 \[    P=\frac{1}{2i\pi}\int_{ \Gamma}R(\lambda, T)d\lambda\]
 the spectral projection with respect to $\sigma_1$. Let $Y:=PX$. Then $TY\subset Y$ and $\sigma(T_{|Y})=\sigma_1$. Let $f\in X,z\in\D$. Then 
 \[  (R(\lambda,T)f)(z)=((\lambda\Id -C_\varphi)^{-1}f)(z)  \]
 for all $\lambda\in\Gamma$.  Now we choose $\varepsilon>0$ small enough and deduce from Lemma~\ref{lem:39} and \ref{lem:310} that 
 \begin{eqnarray*}
 	 (Pf)(z)  & =  &\frac{1}{2i\pi} \int_{ \Gamma} (R(\lambda,T)f)(z)d\lambda \\
 	  & = & \frac{1}{2i\pi} \int_{ \Gamma}((\lambda\Id -C_\varphi)^{-1}f)(z)d\lambda\\
 	  & = & \sum_{k\in J_2}\frac{1}{2i\pi}\int_{|\lambda-\lambda_k|=\varepsilon} ((\lambda\Id -C_\varphi)^{-1}f)(z)d\lambda\\
 	  & = & \sum_{k\in J_2} \langle \Psi_k,f\rangle \kappa^k (z)\\
 	  & = & \sum_{k\in J_1} \langle \Psi_k,f\rangle \kappa^k (z),\\
 	 \end{eqnarray*} 	
  where $J_1:=J_2\cap J$. 
  
  Since ${T'\Psi_k}_{|X}={\lambda_k\Psi_k}_{|X}$ for all $k\in J_1$, and since the $\lambda_k$ are all different, it follows that the $\Psi_k$, $k\in J_1$, are linearly independent. Consequently we find $f_l\in X$ such that 
  $\langle \Psi_k, f_l\rangle =\delta_{kl}$  for $k,l\in J_1$. It follows that the $\kappa^k$, $k\in J_1$, form a basis of $Y$ consisting of eigenvectors of $T$, $T\kappa^k =\lambda_k\kappa^k$, $k\in J_1$. Thus 
  \[  \sigma_1=\sigma(T_{|Y})=\{  \lambda_k:k\in J_1 \}.  \]
\end{proof}	   

\begin{proof}[Proof of Theorem~\ref{th:lenoir}]
Let $K=\sigma(T)\setminus \{  \lambda_n:n\in J_0\}$. Then $K$ is compact and $0\in K$. It suffices to show that $K$ is connected. 

Let $\sigma_1\subset K$ be open and closed. We have to show that $\sigma_1=\emptyset$ or $\sigma_1=K$. \\
\emph{First case}: $0\not\in\sigma_1$. \\
We show that $\sigma_1$ is open in $\sigma(T)$. Let $z_0\in\sigma_1$. Then $z_0\neq 0$ and $z_0\neq \lambda_n$ for all 
$n\in J_0$. Thus there exists $\varepsilon>0$ such that $z\neq \lambda_n$ for all $n\in J_0$ if $|z-z_0|<\varepsilon$ and, if $z\in K$, then $z\in\sigma_1$. Hence also $D(z_0,\varepsilon)\cap \sigma(T)\subset \sigma_1$.  This proves that $\sigma_1$ is open in $\sigma(T)$. It is trivially closed. By Lemma~\ref{lem:lenoir} there exists a finite set $J_1\subset J_0$ such that $\sigma_1\subset \{\lambda_n:n\in J_1\}$. 
Since $\sigma_1\subset K$, it follows that $\sigma_1= \emptyset$. \\
\emph{Second case}: $0\in\sigma_1$. \\
Then $K\setminus \sigma_1=\emptyset$ by the first case. Thus $\sigma_1=K$.\\
By Corollary~\ref{cor:55}, the point $0$ is not isolated in $\sigma(T)$. Thus $0\in\sigma_e(T)$. It follows that $\sigma_0(T)\subset\sigma_e(T)$. Since (by Theorem~\ref{th:42}) $\sigma(T)\setminus \sigma_0(T)$ consists of Riesz points, we conclude that $\sigma_0(T)=\sigma_e(T)$. 
 \end{proof}
 Of course, it can happen that $\sigma_0(T)=\{0\}$. Here is a situation where it is bigger.   
\begin{cor}
	Let $n_0\in\N_0$. Assume that $\lambda_{n_0}\in\sigma(T)$ but $\kappa^{n_0}\not\in X$. Then $\lambda_{n_0}\in\sigma_0(T)$. In particular $r_e(T)\geq |\lambda_{n_0}|$. 
\end{cor}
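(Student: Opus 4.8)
The plan is to argue by contradiction, leaning entirely on the structural description of $\sigma(T)$ furnished by Theorem~\ref{th:lenoir}. First I would observe that the two hypotheses together already force $\dim X=\infty$, so that Theorem~\ref{th:lenoir} applies: if $X$ were finite-dimensional, then by Proposition~\ref{prop:64} we would have $X=\span\{\kappa^l:l\in J\}$ for some finite $J\subset\N_0$ and $\sigma(T)=\{\lambda_n:n\in J\}$; since the $\lambda_n$ are pairwise distinct, $\lambda_{n_0}\in\sigma(T)$ would give $n_0\in J$ and hence $\kappa^{n_0}\in X$, contradicting the standing assumption $\kappa^{n_0}\not\in X$. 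Thus we are legitimately in the setting of Theorem~\ref{th:lenoir}.

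Next, I would suppose for contradiction that $\lambda_{n_0}\not\in\sigma_0(T)$. By Theorem~\ref{th:lenoir} the spectrum decomposes as $\sigma(T)=\sigma_0(T)\cup\{\lambda_n:n\in J_0\}$, so $\lambda_{n_0}$ must belong to $\{\lambda_n:n\in J_0\}$. Because $0<|\varphi'(\alpha)|<1$, the eigenvalues $\lambda_n=\varphi'(\alpha)^n$ are pairwise distinct, so the membership $\lambda_{n_0}=\lambda_n$ with $n\in J_0$ forces $n=n_0$, i.e. $n_0\in J_0$.

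The decisive step is then to recall the observation recorded just before Theorem~\ref{th:lenoir}: for every $n\in J_0$ one has $\kappa^n\in X$ (indeed $T\kappa^n=\lambda_n\kappa^n$). Applying this to $n_0\in J_0$ yields $\kappa^{n_0}\in X$, in direct contradiction with the hypothesis $\kappa^{n_0}\not\in X$. Hence $\lambda_{n_0}\in\sigma_0(T)$, as claimed. Finally, the assertion on the essential spectral radius is immediate: Theorem~\ref{th:lenoir} identifies $\sigma_0(T)$ with the essential spectrum $\sigma_e(T)$, so $\lambda_{n_0}\in\sigma_e(T)$ and therefore $r_e(T)=\sup\{|\lambda|:\lambda\in\sigma_e(T)\}\geq|\lambda_{n_0}|$.

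I do not anticipate a genuine obstacle here: once Theorem~\ref{th:lenoir} and the remark preceding it are in hand, the whole argument is a short bookkeeping deduction. The only point requiring a little care is ensuring that $\lambda_{n_0}\in\{\lambda_n:n\in J_0\}$ really pins down the \emph{index} $n_0$, rather than merely producing some index $n$ with $\lambda_n=\lambda_{n_0}$; this is exactly the place where injectivity of $n\mapsto\lambda_n$, guaranteed by $0<|\varphi'(\alpha)|<1$, enters.
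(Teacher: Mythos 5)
Your proof is correct and takes essentially the same route as the paper's: both arguments come down to the fact that if $\lambda_{n_0}$ lay in the isolated-point part $\{\lambda_n:n\in J_0\}$ of the decomposition in Theorem~\ref{th:lenoir}, then $\kappa^{n_0}$ would have to belong to $X$ (the paper gets non-isolatedness directly from Theorem~\ref{th:42}, while you invoke the observation recorded just before Theorem~\ref{th:lenoir}, which rests on that same theorem), after which Theorem~\ref{th:lenoir} forces $\lambda_{n_0}\in\sigma_0(T)=\sigma_e(T)$. Your preliminary verification via Proposition~\ref{prop:64} that the hypotheses force $\dim X=\infty$ is a correct and worthwhile addition that the paper leaves implicit as a standing assumption.
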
 
 \begin{proof}
 If follows from Theorem~\ref{th:42} that $\lambda_{n_0}$ is not an isolated point of $\sigma(T)$. Thus $\lambda_{n_0}\not\in \{ \lambda_n:n\in J_0  \}$. Hence $\lambda_{n_0}\in\sigma_0(T)$.  	
 \end{proof}	 
  We cannot expect in the general situation we are considering here to prove more precise results on the geometry of $\sigma_0(T)$. However, for several concrete Banach spaces $X$, it is known that $\sigma_0(T)$ is a disc. Moreover one can estimate the essential radius $r_e(T):=\sup\{ |\lambda|:\lambda\in\sigma_e(T)  \}$ by knowing whether $\kappa^n\in X$ or $\kappa^n\not\in X$. We explain this in the following examples.  
  
  \begin{exam}
  	   \begin{enumerate}
  	   	\item Let $X=H^2(\D)$ and let $\varphi:\D\to\D$ be a Schr\"oder map with fixed point $\alpha\in\D$ and Koenigs eigenfunction $\kappa$. Then $C_\varphi (X)\subset X$. Let $T={C_\varphi}_{|X}$. Then it is known 
  	   	that $\sigma_0(T)$ ($=\sigma_e(T)$) is a disc (see \cite[Theorem 7.30]{CM}). A question which has been investigated is for which $n$ the eigenfunction $\kappa^n$ of $C_\varphi$ lies in $X$, which means, for which $n$ actually $\lambda_n=\varphi'(\alpha)^n\in\sigma_p(T)$. This is related to the essential spectral radius $r_e(T)$.   
  	   	\begin{enumerate}
  	   		\item[(a)] One has $\sigma_0(T)=\{0\}$ if and only if $\kappa^n\in X$ for all $n\in\N_0$ 
  	   		(see \cite[Section 6]{shap98}). To say that $\sigma_0(T)=\{0\}$ is the same as saying that $T$ is a Riesz operator. We had seen in Section~\ref{sec:4} that $C_\varphi$ is always a Riesz operator on $\Hol(\D)$. So the situation is very different if we restrict $C_\varphi$ to a Banach space. 
  	   		\item[(b)] Let $n\in\N$. Then $\kappa^n\in X$ if and only if $|\lambda_n|>r_e(T)$. 
  	   		\begin{proof}
  	   		If $|\lambda_n|>r_e(T)$, then $\lambda_n\in\sigma_p(T)$. Thus $\kappa^n\in X$ by Koenigs' theorem (Theorem~\ref{th:koenigs}).  The converse implication follows from deep results by Poggi-Corradini \cite{PC} and Bourdon--Shapiro \cite{BS}. We follow the survey article \cite{shap98} by Shapiro. Assume that $\kappa^n \in X$. Then $\kappa\in H^{2n}(\D)$. Using the notation of \cite[Section 7]{shap98}, this implies that $h(\kappa)\geq 2n$. By \cite[(12) in Section 6]{shap98}, one has $r_e(T)=|\lambda_1|^{h(\kappa)/2}$. Since $|\lambda_1|<1$ this implies $r_e(T)\leq |\lambda_1|^n=|\lambda_n|$. We need the strict inequality. Assume that $|\lambda_n|=r_e(T)$. Then $h(\kappa)=2n$. By the "critical exponent result" in \cite[Section 8]{shap98}, this implies that $\kappa^n\in X$. thus $|\lambda_n|>r_e(T)$.    	
  	   	    \end{proof}	
  	   		\item[(c)] The result of (b) can be reformulated by saying that there are no "hidden eigenvalues" besides possibly $\lambda_0$. More precisely, if $\lambda_n$ is an eigenvalue, then $\lambda_n\not\in \sigma_e(T)$. For $\lambda_0=1$ the situation is different. In Example~\ref{ex:1} an inner function 
  	   		$\varphi$ is defined  which is Schr\"oder and $0$ at $0$. Thus $T={C_\varphi}_{|H^2}$ is isometric and non invertible (see \cite[Section 1]{delhi2} for further informations on isometric composition operators on various Banach spaces). Hence $\sigma(T)=\overline{\D}=\sigma_e(T)$, and thus the eigenvalue $\lambda_0=1$ is in the essential spectrum.        
  	   	\end{enumerate}
  	   	\item Also on some weighted Hardy spaces the essential spectrum is a disc. However it can happen that 
  	   	$\lambda_{n+1}<r_e(T)<\lambda_n$ for some $n$. In fact Hurst \cite{hurst} considered Schr\"oder   
  	   	symbols $\varphi$ which are linear fractional maps with a fixed point $\alpha\in\D$ such that $\varphi'(\alpha)\in(0,1)$ and a second  fixed point of modulus one. The Banach spaces on which the composition operators are defined are the  weighted Hardy spaces 
  	   	\[H^2(\beta):=\left\{f(z)=\sum_{n\geq 0}^\infty a_nz^n:\|f\|^2=\sum_{n\geq 0}^\infty |a_n|^2\beta(n)^2<\infty  \right\}\]
  	   	where $\beta(n)=(n+1)^a$, $a<0$.  Recall that for $a=-1/2$, the Banach space is the classical Bergman space $\mathcal B$. 
  	   	In this case the spectrum is 
  	   	\[   \{\lambda\in\C: |\lambda|\leq \varphi'(\alpha)^{(2|a|+1)/2}\} \cup \{\varphi'(\alpha)^n:n\in \N_0\},\]
  	   		and $\kappa^p\in H^2(\beta)$ if and only if $p<|a|+1/2$. For $a=-1$, it follows that $\lambda_2<r_e(T)<\lambda_1$, whereas, for $a=-1/2$, we get $r_e(T)= \lambda_1$. 
  	   \end{enumerate}
   \end{exam}
   
 \begin{rem}
 By Proposition~\ref{th:41} we have seen that the composition operators associated with a Schr\"oder symbol $\varphi$ are not quasinilpotent on a large class of Banach spaces of holomorphic functions. Note that if $\varphi$ has a  fixed point $\alpha \in \D$ such that $\varphi'(\alpha)=0$, the description of the spectrum of $T:={C_\varphi}_{|X}$ may be very different. For example, if $\varphi(z)=z^2$ and $X=zH^2(\D)$, then the spectrum of $T$ is the closed unit disc ($T$ is a non-invertible isometry)  whereas for $X=z{\mathcal B}$, $T$ is quasinilpotent (see \cite[Theorem 2.9]{delhi2}).         	
 \end{rem}	

\textbf{Acknowledgments:}  This research is partly supported by the B\'ezout Labex, funded by ANR, reference ANR-10-LABX-58.  The authors are also grateful to R. Lenoir for stimulating discussions.    
 
  \bibliographystyle{amsplain}
   	
\end{document}